\newcommand{\sysn}{\left\{\begin{array}{rcl}}
\newcommand{\sysk}{\end{array}\right.}
\renewcommand{\le}{\leqslant}
\renewcommand{\ge}{\geqslant}
\newtheorem{theorem}{Theorem}[section]
\newtheorem{lemma}[theorem]{Lemma}
\theoremstyle{example}
\newtheorem{proposition}[theorem]{Proposition}
\theoremstyle{definition}
\newtheorem{definition}[theorem]{Definition}
\newtheorem{remark}[theorem]{Remark}
\newtheorem{corollary}[theorem]{Corollary}
\journal{...}
\begin{document}

\title{Embedding Theorems for function spaces}
\author[affil1]{Mikhail Al'perin}

\address[affil1]{Krasovskii Institute of Mathematics and Mechanics, Yekaterinburg, Russia}

\ead{alper@mail.ru}

\author[affil1]{Sergei Nokhrin}

\address[affil2]{Ural Federal
 University, Yekaterinburg, Russia}

\ead{varyag2@mail.ru}

\author[affil1,affil2]{Alexander V. Osipov}


\ead{OAB@list.ru}

\begin{abstract} In this paper, we have proved results similar to Tychonoff's
Theorem on embedding a space of functions with the topology of
pointwise convergence into the Tychonoff product of topological
spaces, but applied to the function space $C(X,Y)$ of all
continuous functions from a topological space $X$ into a uniform
space $Y$ with the topology of uniform convergence on a family of
subsets of $X$ and with the (weak) set-open topology. We also
investigated the following question: how the topological embedding
of the space $C(X,Y)$ is related to algebraic structures (such as
topological groups, topological rings and topological vector
spaces) on $C(X,Y)$.
\end{abstract}


\begin{keyword}
embedding \sep function space  \sep set-open topology \sep
topology of uniform convergence \sep uniform space \sep
topological group \sep topological ring \sep topological vector
space

\MSC[2010] 54C35 \sep 54C40 \sep 54C25 \sep 54H11

\end{keyword}

\maketitle 


\section{Introduction}

The space of continuous functions from one topological space to
another is one of the main object studied in functional analysis
and topology. As a development of the concepts of a uniformly
converging sequence of functions and a pointwise converging
sequence of functions, the concepts of topologies of pointwise and
uniform convergence on the space of functions arose. Considerable
progress has been made in the study of these topologies. The
case of the topology of uniform convergence, this progress is due
to advances in the study of Banach spaces. And in the case of the
topology of pointwise convergence, most of the interesting results
are related to Tychonoff's Theorem \cite{tych} on the embedding of
a space of functions with such topology into Tychonoff's product
of topological spaces.

Later, other topologies began to be defined and studied at the
space $C(X,Y)$ of all continuous functions from a topological
space $X$ to a topological space $Y$. In 1945, Fox \cite{fox}
defined the compact-open topology, later Arens and Dugungi
\cite{AreDug} defined the set-open topology. In 1955, the concept
of the topology of uniform convergence on a family of subsets
first appeared in \cite{kell}. Over the past twenty years,
significant progress has been made in the study of
topological-algebraic properties of the space $C(X,Y)$ with the
(weak) set-open topology [6,7,17,24] and [26-36].

In this paper, we study a construction similar to Tychonoff's
Theorem, but applying for a function space $C(X,Y)$ with the
topology of uniform convergence on a family of subsets of $X$ and
with the (weak) set-open topology. The motivation is the following
two theorems.

\begin{theorem}(Theorem 3.4.11 in \cite{Eng}) If $X$ is a
$k$-space, then for every topological space $Y$ the space $C(X,Y)$
with the compact-open topology (with the topology of pointwise
convergence) is homeomorphic to the limit of the inverse system
$S(X)=\{C(K,Y), \pi^{K_1}_{K_2}, \mathcal{K}(X)\}$ of the spaces
$C(K,Y)$ with the compact-open topology (with the topology of
pointwise convergence) where $K\in \mathcal{K}(X)$ and
$\mathcal{K}(X)$ denote the family of all non-empty compact
subsets of a Hausdorff space $X$.
\end{theorem}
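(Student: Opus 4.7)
The plan is to construct the natural evaluation map $\Phi\colon C(X,Y)\to\varprojlim S(X)$ sending $f$ to the thread $(f|_K)_{K\in\mathcal{K}(X)}$, and then show $\Phi$ is a homeomorphism under both the compact-open and the pointwise topologies. First I would verify that the image is indeed a thread: each restriction $f|_K$ is continuous, and $\pi^{K_1}_{K_2}(f|_{K_2})=f|_{K_1}$ whenever $K_1\subseteq K_2$, so $\Phi$ is well-defined.

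Injectivity is immediate by taking $K=\{x\}$ for each $x\in X$. For surjectivity, given a thread $(f_K)_{K\in\mathcal{K}(X)}$, I would define $f\colon X\to Y$ pointwise by $f(x)=f_{\{x\}}(x)$; coherence of the thread forces $f|_K=f_K$ for every $K\in\mathcal{K}(X)$, so $f$ has continuous restriction to every compact subset of $X$. The $k$-space hypothesis then promotes $f$ itself to a continuous function on $X$. This is the sole place the $k$-space assumption enters, and it is essential.

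For the topological part I would argue both cases in parallel. In the compact-open case, subbasic open sets of $C(X,Y)$ are $[K,V]=\{f:f(K)\subseteq V\}$ with $K\subseteq X$ compact and $V\subseteq Y$ open, while the factors $C(K,Y)$ of the product carry the analogous subbases and the inverse-limit topology is generated by preimages $p_K^{-1}([K',V])$ for $K'\subseteq K$. Since $p_K\circ\Phi$ is the restriction map $f\mapsto f|_K$, the identity $\Phi^{-1}\bigl(p_K^{-1}([K',V])\bigr)=[K',V]$ gives continuity of $\Phi$ and of $\Phi^{-1}$ simultaneously. In the pointwise case the same scheme applies with finite (indeed singleton) subsets of $X$ replacing compact sets in the subbase.

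The main obstacle is the surjectivity step, which hinges squarely on the $k$-space property: without it, a coherent thread of restrictions need not glue to a globally continuous function. Everything else reduces to bookkeeping once $\Phi$ is defined, with a secondary minor point being the explicit matching of the natural subbases of $C(X,Y)$ and of $\varprojlim S(X)$ under $\Phi$ in each of the two topologies considered.
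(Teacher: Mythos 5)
Your proof is correct and follows essentially the same scheme the paper uses for its generalizations of this statement (Theorems~\ref{t1_2_7} and~\ref{t1_2_14}): build the compatible map $f\mapsto(f|_K)_K$, use the $k$-space (resp.\ $\lambda_f$-space) hypothesis only to glue a thread back into a continuous function, and match the subbases $[K,V]$ with the preimages $p_K^{-1}([K',V])$ to get a homeomorphism. No gaps.
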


\begin{theorem}(\cite{alno}) For a Hausdorff space $X$ there exists a
$k$-space $X_k$ such that $C_c(X,Y)$ can be embedded in
$C_c(X_k,Y)$ for any topological space $Y$.
\end{theorem}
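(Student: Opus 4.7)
The plan is to take $X_k$ to be the \emph{$k$-ification} of $X$: the same underlying set, equipped with the topology
\[
\tau_k=\{U\subseteq X : U\cap K\text{ is open in } K\text{ for every }K\in\mathcal{K}(X)\},
\]
where each compact $K$ carries its subspace topology from $X$. A routine check shows that $\tau_k$ refines the original topology of $X$, that the identity map $i\colon X_k\to X$ is continuous, and that $X_k$ is a $k$-space.

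The crux of the argument is the claim that the compact subsets of $X$ and of $X_k$ coincide, and that on each such $K$ the two subspace topologies agree. Continuity of $i$ gives that any subset compact in $X_k$ is compact in $X$. Conversely, if $K\in\mathcal{K}(X)$ and $\mathcal{U}$ is a cover of $K$ by sets open in its $X_k$-subspace topology, each member has the form $V\cap K$ with $V\in\tau_k$, and by the very definition of $\tau_k$ each such $V\cap K$ is already open in $K$ in its $X$-subspace topology; compactness of $K$ in $X$ then yields a finite subcover, so $K$ is compact in $X_k$. The same reasoning shows the two subspace topologies on $K$ coincide.

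Granted this identification, I would define
\[
\Phi\colon C_c(X,Y)\longrightarrow C_c(X_k,Y),\qquad \Phi(f)=f\circ i.
\]
Each $\Phi(f)$ is continuous as a composition of continuous maps; $\Phi$ is injective because $i$ is a bijection of underlying sets. Because the compact subsets and their subspace topologies are the same in $X$ and in $X_k$, a subbasic compact-open set $[K,U]:=\{g : g(K)\subseteq U\}$ in $C_c(X_k,Y)$ satisfies $\Phi^{-1}([K,U])=[K,U]$ in $C_c(X,Y)$, and conversely $\Phi([K,U])=[K,U]\cap\Phi(C(X,Y))$. This simultaneously yields continuity of $\Phi$ and openness onto its image, so $\Phi$ is the desired embedding.

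The main obstacle — essentially the only non-trivial point — is the lemma identifying compact subsets of $X$ with those of $X_k$ together with their subspace topologies. Hausdorffness of $X$ is the natural ambient hypothesis that keeps compact sets well-behaved (in particular closed) and justifies the various uses of the subspace topology; everything else reduces to unwinding the definition of the compact-open topology.
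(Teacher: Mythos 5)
Your proposal is correct and follows essentially the same route as the paper: your $k$-ification $X_k$ is exactly the paper's $\lambda$-leader $X_\lambda$ for $\lambda=\mathcal{K}(X)$ (the $k$-leader), and your map $\Phi(f)=f\circ i$ is the dual map $e^{\#}$ of the natural condensation $e\colon X_\lambda\to X$, whose embedding property is the content of Theorem \ref{t1_2_3}, Corollary \ref{t1_2_4} and Theorem \ref{t1_2_13}. The key lemma you isolate (compact sets and their subspace topologies are unchanged by the $k$-ification) is precisely what makes $e|_{e^{-1}(\overline{A})}$ a homeomorphism in the paper's construction, so nothing is missing.
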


In this paper we study two natural questions: How essential is the
condition to be a $k$-space in these theorems? How  the
topological embedding of the space $C(X,Y)$ is related to
algebraic structures (such as topological groups, topological
rings and topological vector spaces) on $C(X,Y)$?

\section{Notation and terminology}
 The set of positive integers is denoted by $\mathbb{N}$ and
$\omega=\mathbb{N}\cup \{0\}$. Let $\mathbb{R}$ be the real line,
we put $\mathbb{I}=[0,1]\subset \mathbb{R}$, and let $\mathbb{Q}$
be the rational numbers. We denote by $\overline{A}$ (or $Cl_X A$)
the closure of $A$ (in $X$).

As in \cite{kawi,fran}, a {\it natural cover} $\Sigma$ assigns to
each topological space $X$ a cover $\Sigma_X$ of $X$ satisfying:

(i) if $S\in \Sigma_X$ and $S$ is homeomorphic to $T\subseteq Y$,
then $T\in \Sigma_Y$,

(ii) if $f:X\rightarrow Y$ is continuous and $S\in\Sigma_X$, there
is a $T\in\Sigma_Y$ with $f(S)\subseteq T$.

\begin{definition}\label{t1_2_1}{\rm\cite{kell}}
Let $X$ be a topological space, $\lambda\subseteq 2^X$, $(Y,\mu)$
be a uniform space. A topology on $C(X,Y)$ generated by the
uniformity
$$\nu=\{\langle A,M\rangle \subseteq C(X,Y)\times
C(X,Y):A\in\lambda,M\in\mu\}$$ where
$$\langle A,M\rangle = \{\langle f,g\rangle \in C(X,Y)\times
C(X,Y):\forall x\in A~\langle f(x),g(x)\rangle\in M\}$$ is called
{\it topology of uniform convergence on elements of $\lambda$} and
denote by $C_{\lambda,\mu}(X,Y)$.
\end{definition}

The well-known fact that if $\lambda$ is a family of all compact
subsets of $X$ or all finite subsets of $X$ then the topology on
$C(X,Y)$ induced by the uniformity $\nu$ of uniform convergence on
elements of $\lambda$ depends only on the topology induced on $Y$
by the uniformity $\mu$ (see \cite{kell, Eng}). In these cases, we
will use the notation $C_c(X,Y)$ and $C_p(X,Y)$, respectively. If
$Y=\mathbb{R}$ then $C_c(X)$ and $C_p(X)$, respectively.

In case, if $(Y,\rho)$ is a metric space and the uniformity $\mu$
is induced by the metric $\rho$, then for $C_{\lambda,\mu}(X,Y)$,
we will use the notation $C_{\lambda,\rho}(X,Y)$ and
$C_{\lambda,\rho}(X)$ for the case $Y=\mathbb{R}$.

If $X\in\lambda$, we write $C_\mu(X,Y)$ in place of
$C_{\lambda,\mu}(X,Y)$ and $C_{\mu}(X)$ in place of
$C_{\mu}(X,\mathbb{R})$.

\begin{remark}\label{t1_2_2}

For the topology of uniform convergence on elements of $\lambda$,
we assume that the following natural conditions are satisfied.

(1) if $A\in\lambda$ and $A'\subseteq A$ then $A'\in\lambda$.

(2) if $A_1, A_2\in\lambda$ then $A_1\bigcup A_2\in\lambda$.

(3) if $A\in\lambda$ then $\overline{A}\in\lambda$.

 Note that $C_{\lambda,\mu}(X,Y)$ is Hausdorff if and only if the set $\bigcup \lambda$ is dense in $X$,
 i.e.,
\hbox{$\overline{\bigcup\{A:A\in\lambda\}}=X$} (see
\cite{Bur75b}).

 Then we have an additional condition on $\lambda$.

(4) $\lambda$ is a cover of $X$.
\end{remark}

The {\it set-open topology} on a family $\lambda$ of non-empty
subsets of the set $X$ is a generalization of the compact-open
topology and of the topology of pointwise convergence. This
topology, first introduced by Arens and Dugunji in \cite{AreDug},
is one of the important topologies on $C(X,Y)$.

If $A\subseteq X$ and $V\subseteq Y$, then $[A,V]$ is defined by
as $[A,V]=\{f\in C(X,Y): f(A)\subseteq V\}$.

\begin{definition}\label{t1_2_12}
Let $X$ and $Y$ be topological spaces, $\lambda\subseteq 2^X$ . A
topology on $C(X,Y)$ is called a {\it $\lambda$-open topology}
(set-open topology) provided the family $\{[A,V]: A\in \lambda$
and $V$ is open in $Y \}$ form a subbase for the topology. The
function space $C(X,Y)$, provided with this topology, is denoted
by $C_{\lambda}(X,Y)$.
\end{definition}

The {\it weak set-open} topology on a family $\lambda$ of nonempty
subset of the set $X$ (the $\lambda^*$-open topology) is a
generalization of the bounded-open topology \cite{kundu} and of
the topology of pointwise convergence. All sets of the form
$[F,U]^*=\{f\in C(X,Y): \overline{f(F)}\subseteq U\}$, where $F\in
\lambda$ and $U$ is an open subset of $Y$, form a subbase of the
$\lambda^*$-open topology. The function space $C(X,Y)$ provided
with this topology is denoted by $C_{\lambda^*}(X,Y)$.

\section{$\lambda$- and $\lambda_f$-leaders}

In this section we introduce and study the notation of a {\it
$\lambda$-space} as a generalization of the notation of a
$k$-space first defined by Hurewicz. The class of $k$-spaces was
introduced by Gale in \cite{gal} (where the notion is ascribed to
Hurewicz).

A topological space $X$ is called a {\it $k$-space} if $X$ is a
Hausdorff space and $X$ is an image of a locally compact space
under a quotient mapping. In other words, $k$-spaces are Hausdorff
spaces that can be represented as quotient spaces of locally
compact spaces.

Examples of $k$-spaces are Hausdorff spaces which are locally
compact, or satisfy the first axiom of countability. Hence all
metric spaces are $k$-spaces.

Recall that a Hausdorff space is a $k$-space if it has the {\it
final topology} with respect to all inclusions $C \rightarrow X$
of compact subspaces $C$ of $X$, so that a set $A$ in $X$ is
closed in $X$ if and only if $A\cap C$ is closed in $C$ for all
compact subspaces $C$ of $X$.

\begin{definition} Let $X$ be a Hausdorff topological space, $\lambda\subseteq 2^X$.
The space $X$ is called a {\it $\lambda$-space}, if for any
$A\subseteq X$, $A$ is closed in $X$ if and only if $A\cap
\overline{B}$ is closed in $\overline{B}$ for all $B\in \lambda$.
\end{definition}

In the case when $\lambda$ is a natural cover of $X$, the notion
of $\lambda$-space was introduced by S.P. Franklin \cite{fran}.

Note that if $X$ is a $\lambda$-space, then $X\setminus
\overline{\bigcup\{A:\, A\in\lambda\}}$ either empty or discrete.

\begin{definition}
Let $X$ be a topological space, $\lambda\subseteq 2^X$. The space
$X$ is a {\it locally $\lambda$-space} if for any $x\in X$ there
are a neighbourhood $U$ of $x$ and an $A\in \lambda$ such that
$\overline{U}=\overline{A}$.
\end{definition}

\begin{theorem} Let $X$ be a Hausdorff space,
$\lambda$ be a cover of $X$. Then the following assertions are
equivalent:

(1)  $X$ is a $\lambda$-space;

(2) there are a space $Y$ and $\gamma\subseteq 2^Y$ such that $Y$
is a locally $\gamma$-space, $X$ is a quotient image of $Y$ with
respect to a quotient mapping $f:\, Y\to X$ and
$\gamma\subseteq\{B\subseteq Y:\ \exists A\in\lambda\ \mbox{and}\
f(B)=\overline{A}\}$.
\end{theorem}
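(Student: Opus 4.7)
The plan is to establish $(1)\Leftrightarrow(2)$ by the standard ``quotient from a locally nice space'' recipe, suitably adapted to the $\lambda$-setting.

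For $(1)\Rightarrow(2)$, I would take $Y$ to be the topological sum $Y=\bigsqcup_{A\in\lambda}\overline{A}$ of the closed subspaces $\overline{A}$ (a disjoint union in which each summand is clopen), let $f\colon Y\to X$ be the map whose restriction to each summand is the inclusion $\overline{A}\hookrightarrow X$, and set $\gamma=\{C_A:A\in\lambda\}$, where $C_A$ denotes the $A$th summand. Surjectivity follows because $\lambda$ is a cover of $X$, and continuity is immediate on each clopen summand. Since each $C_A$ is clopen in $Y$, for any $y\in C_A$ the choice $U=C=C_A$ satisfies $\overline{U}=\overline{C}$, so $Y$ is a locally $\gamma$-space; and $f(C_A)=\overline{A}$ verifies the $\gamma$-condition. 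Finally, $f$ is quotient: if $f^{-1}(A)$ is closed in $Y$, then for each $B\in\lambda$ the set $A\cap\overline{B}$ is the image of $f^{-1}(A)\cap C_B$ under the homeomorphism $C_B\cong\overline{B}$, hence closed in $\overline{B}$, and hypothesis (1) then forces $A$ to be closed in $X$.

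For $(2)\Rightarrow(1)$, the nontrivial implication is: if $A\cap\overline{B}$ is closed in $\overline{B}$ for every $B\in\lambda$, then $A$ is closed in $X$. Since $f$ is a quotient map, it suffices to show $f^{-1}(A)$ is closed in $Y$. Fix $y\in Y\setminus f^{-1}(A)$, and using that $Y$ is a locally $\gamma$-space, pick an open neighborhood $U\ni y$ and a $C\in\gamma$ with $\overline{U}=\overline{C}$; write $f(C)=\overline{A'}$ for the corresponding $A'\in\lambda$. Because $\overline{A'}$ is already closed in $X$, continuity of $f$ gives $f(\overline{C})\subseteq\overline{f(C)}=\overline{A'}$, and the reverse inclusion $f(\overline{C})\supseteq f(C)=\overline{A'}$ is trivial, so $f(\overline{C})=\overline{A'}$. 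Consequently $(f|_{\overline{C}})^{-1}(A\cap\overline{A'})=\overline{C}\cap f^{-1}(A)$, and this set is closed in $\overline{C}=\overline{U}$ by the hypothesis on $A$. Intersecting with the open set $U$, the subset $U\cap f^{-1}(A)$ is closed in $U$ and does not contain $y$, supplying an open neighborhood of $y$ in $Y$ disjoint from $f^{-1}(A)$.

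The step I expect to be the main obstacle is the passage from $f(C)=\overline{A'}$ to $f(\overline{C})=\overline{A'}$: one must exploit that $\overline{A'}$ is packaged as a closure (and hence already closed in $X$) to transfer the hypothesis from $C$ to $\overline{C}$. Without this upgrade, the preimage of the closed set $A\cap\overline{A'}$ under $f|_{\overline{C}}$ would not coincide with $\overline{C}\cap f^{-1}(A)$, and the chain of closedness deductions would break down.
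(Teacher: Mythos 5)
Your proposal is correct and follows essentially the same route as the paper: the same disjoint-sum construction $\bigoplus_{A\in\lambda}\overline{A}$ with the natural map for $(1)\Rightarrow(2)$, and for $(2)\Rightarrow(1)$ the same local argument via a neighbourhood $U$ with $\overline{U}=\overline{C}$ and the closedness of $A\cap\overline{A'}$ (the paper phrases it as ``every closure point of $f^{-1}(S)$ lies in $f^{-1}(S)$'' rather than ``the complement is open,'' which is the same thing). Your explicit justification of $f(\overline{C})=\overline{A'}$ is a detail the paper glosses over, and it is handled correctly.
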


\begin{proof} Let $f:\, Y\to X$ be a quotient map and  $Y$ be a locally $\lambda$-space. Let $S\subseteq X$ be a set such that $S\cap \overline{A}$
is closed in $\overline{A}$ for all $A\in\lambda$. Take a point
$y\in \overline{f^{-1}(S)}$ and a neighbourhood $U(y)$ of $y$ such
that $\overline{U(y)}=\overline{B}$ for some $B\in\gamma$. The set
$f^{-1}(S\cap \overline{f(U(y)})=f^{-1}(S\cap \overline{A})$
where $A\in\lambda$, and, hence, it is closed in $Y$. On the other
hand, $f^{-1}(S)\cap \overline{U(y)}\subseteq f^{-1}(S\cap
f(\overline{U(y)}))\subseteq f^{-1}(S)$. The point $y\in
\overline{f^{-1}(S)\cap \overline{U(y)}}$, hence, $y\in
f^{-1}(S)$. Then, the set $f^{-1}(S)$ is closed in $Y$. Since the
mapping $f$ is quotient, $S$ is closed.

Let $X$ be a Hausdorff $\lambda$-space and
$\bigcup\{A:\,A\in\lambda\}=X$.

The space $\tilde X=\bigoplus\{\overline{A}:\,A\in\lambda\}$ is a
locally $\gamma$-space where $\gamma$ is a family of members of
direct sum of elements of $\lambda$. Consider the family of maps
$\{i_A:\, A\in\lambda\}$ where $i_A:\, \overline{A}\to X$ is an
identification continuous map. Define the mapping $f:\, \tilde
X\to X$ as $f|_{\overline{A}}=i_A$ for every $A\in\lambda$. Since
for any $A_1,\, A_2\in\lambda$ the equality
$f_{A_2}|_{\overline{A_1}\cap\overline{A_2}}=f_{A_1}|_{\overline{A_1}\cap\overline{A_2}}$
holds, the mapping $f$ is well-defined and continuous.

Note that for a subset $S$ of $X$, if $S\cap \overline{A}$ is
closed in $\overline{A}$  for each $A\in\lambda$, then $S$ is
closed in $X$. Hence, it follows that $f$ is a quotient map.
\end{proof}

 If we let $\bigoplus\lambda$
denote the disjoint topological (direct) sum of the spaces $S\in
\lambda$, the inclusion maps $S\subseteq X$ combine to yield a
natural map $f:\,\bigoplus\lambda\to X$.

For natural covers the same result was proved by Franklin in
\cite{fran}: $X$ is a $\lambda$-space if and only if the natural
mapping $f:\,\bigoplus\lambda\to X$ is a quotient mapping.

\begin{corollary}{\it A Hausdorff space $X$ is a $\lambda$-space if and only
if for every $A\subseteq X$, the set $A$ is open in $X$ provided
that $A\cap \overline{Z}$ is open in $\overline{Z}$ for each $Z\in
\lambda$}.
\end{corollary}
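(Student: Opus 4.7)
The plan is to derive the corollary from the definition of a $\lambda$-space by the standard duality between open and closed sets, exploiting the fact that a subset is open in $\overline{Z}$ if and only if its complement (relative to $\overline{Z}$) is closed in $\overline{Z}$, and the analogous statement in $X$.

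First, I would observe that the easy half of both the definition and the corollary is automatic: in any topological space, if $A$ is closed (resp.\ open) in $X$ then $A\cap \overline{Z}$ is closed (resp.\ open) in $\overline{Z}$ for every $Z$, since subspace intersections of closed/open sets are closed/open. So the content of both the definition and the corollary lies in the converse implications, and the corollary asserts that these two converse implications are equivalent.

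For the direction ``$\lambda$-space $\Rightarrow$ the open-set criterion,'' I would start from an arbitrary $A\subseteq X$ with $A\cap\overline{Z}$ open in $\overline{Z}$ for every $Z\in\lambda$. Setting $B=X\setminus A$, I get
$$B\cap\overline{Z}=\overline{Z}\setminus (A\cap\overline{Z}),$$
which is closed in $\overline{Z}$ for every $Z\in\lambda$. The $\lambda$-space property then forces $B$ to be closed in $X$, so $A$ is open. For the reverse direction, I would start from $A\subseteq X$ with $A\cap\overline{Z}$ closed in $\overline{Z}$ for every $Z\in\lambda$, set $C=X\setminus A$, and note that $C\cap\overline{Z}=\overline{Z}\setminus(A\cap\overline{Z})$ is open in $\overline{Z}$ for every $Z\in\lambda$; the hypothesized open-set criterion then gives $C$ open in $X$, hence $A$ closed in $X$, which is exactly what the definition requires.

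There is essentially no obstacle here — the corollary is a routine reformulation via complements, and the argument works uniformly because taking complements inside the subspace $\overline{Z}$ and inside $X$ commutes with intersection with $\overline{Z}$. I would simply write out the two short complementation arguments and cite the definition to finish.
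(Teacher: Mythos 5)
Your proof is correct and is exactly the routine complementation argument the paper has in mind: the corollary is stated without proof as an immediate consequence of the definition of a $\lambda$-space, and your passage to complements, using $(X\setminus A)\cap \overline{Z}=\overline{Z}\setminus (A\cap\overline{Z})$ in both directions, is the intended verification.
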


\begin{proposition} Let $X$ be a topological space, $\lambda\subseteq \gamma\subseteq 2^X$. If $X$ is a $\lambda$-space then
$X$ is a $\gamma$-space.
\end{proposition}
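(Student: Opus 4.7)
The plan is to unpack the definition of a $\gamma$-space and check both directions of the ``closed iff traces closed'' equivalence, noting that one direction is automatic and the other follows immediately from inclusion.

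First I would observe that for any family $\delta \subseteq 2^X$ and any closed set $A \subseteq X$, the intersection $A \cap \overline{B}$ is closed in $\overline{B}$ for every $B \in \delta$, simply because $A \cap \overline{B}$ is closed in $X$ as the intersection of two closed sets, and hence closed in the subspace $\overline{B}$. So the forward implication in the $\gamma$-space definition requires no hypothesis.

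For the nontrivial direction, suppose $A \subseteq X$ satisfies: $A \cap \overline{B}$ is closed in $\overline{B}$ for every $B \in \gamma$. Since $\lambda \subseteq \gamma$, this property holds a fortiori for every $B \in \lambda$. Because $X$ is a $\lambda$-space, this implies $A$ is closed in $X$. Combining the two directions gives the characterization required of a $\gamma$-space.

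There is really no obstacle here; the statement is a pure monotonicity remark about the defining family. The only thing to be mindful of is that the definition is phrased as a biconditional, so one must verify both directions rather than just propagating the hypothesis --- but, as noted, the ``only if'' direction is a universal fact about closed sets in subspace topologies and is not sensitive to the choice of $\lambda$ or $\gamma$. Hence the proof is essentially a one-line reduction.
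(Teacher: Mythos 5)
Your proof is correct and follows essentially the same route as the paper's: the nontrivial direction is the monotonicity observation that the $\gamma$-trace hypothesis implies the $\lambda$-trace hypothesis, and the "only if" direction is the automatic fact about closed sets in subspaces (which the paper leaves implicit). No issues.
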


\begin{proof} Let $S\subseteq X$ such that $S\cap \overline{A}$ is closed for each $A\in\gamma$. Since $X$ is a $\lambda$-space and $\lambda\subseteq \gamma$,
then $S$ is closed in $X$.
\end{proof}

\begin{theorem}\label{th116}  A continuous map $f:\,X\to Y$ of a topological space $X$
to a $\lambda$-space $Y$ is closed (open, quotient) if and only if
for every $A\in \lambda$ the restriction map
$f|_{f^{-1}(\overline{A})}:\, f^{-1}(\overline{A})\to\overline{A}$
is closed (open, quotient).
\end{theorem}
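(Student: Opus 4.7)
The plan is to prove both implications using a single set-theoretic identity: for any $S\subseteq X$ and any $A\in\lambda$,
$$f(S\cap f^{-1}(\overline{A}))=f(S)\cap \overline{A},$$
which follows immediately from the definition of preimage. Throughout, I shall use that $f^{-1}(\overline{A})$ is closed in $X$ since $f$ is continuous and $\overline{A}$ is closed in $Y$.

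For the direction ($\Rightarrow$), I would argue as follows. In the closed case, every closed subset $D$ of $f^{-1}(\overline{A})$ is of the form $D'\cap f^{-1}(\overline{A})$ with $D'$ closed in $X$; since $f^{-1}(\overline{A})$ itself is closed in $X$, the set $D$ is closed in $X$, so $f(D)$ is closed in $Y$ and, being contained in $\overline{A}$, closed in $\overline{A}$. In the open case, for $U$ open in $f^{-1}(\overline{A})$, write $U=U'\cap f^{-1}(\overline{A})$ with $U'$ open in $X$; the identity then yields $f(U)=f(U')\cap \overline{A}$, which is open in $\overline{A}$ because $f(U')$ is open in $Y$. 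In the quotient case, one invokes the (standard) fact that restricting a quotient map to the preimage of a closed subspace yields a quotient map. I would reprove this briefly by using the closed-set formulation: a set $V\subseteq \overline{A}$ satisfies $(f|_{f^{-1}(\overline{A})})^{-1}(V)=f^{-1}(V)$, and this is closed in $f^{-1}(\overline{A})$ iff closed in $X$ (since $f^{-1}(\overline{A})$ is closed), so the quotient character of $f$ and of its restriction coincide on subsets of $\overline{A}$.

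For the direction ($\Leftarrow$), the $\lambda$-space structure of $Y$ is the essential tool. Given a closed $D\subseteq X$, to conclude that $f(D)$ is closed in $Y$ it suffices, by the definition of $\lambda$-space, to verify that $f(D)\cap \overline{A}$ is closed in $\overline{A}$ for every $A\in\lambda$. The identity rewrites this as $f(D\cap f^{-1}(\overline{A}))$, and $D\cap f^{-1}(\overline{A})$ is closed in $f^{-1}(\overline{A})$, so the hypothesis finishes the argument. The open case is identical, using the corollary characterizing open sets in a $\lambda$-space. For the quotient case, suppose $f^{-1}(S)$ is open in $X$ for some $S\subseteq Y$; then $(f|_{f^{-1}(\overline{A})})^{-1}(S\cap \overline{A})=f^{-1}(S)\cap f^{-1}(\overline{A})$ is open in $f^{-1}(\overline{A})$, whence $S\cap \overline{A}$ is open in $\overline{A}$ by the quotient assumption on the restriction, and the $\lambda$-space property (in its open-set version) delivers $S$ open in $Y$. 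Surjectivity of $f$ in this case follows from the quotient restrictions together with the cover condition on $\lambda$.

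The only step that is not an immediate unfolding of the identity is the forward direction for quotient maps, where one must establish that the restriction of a quotient map to the preimage of a closed set is quotient. This is the one place I expect to verify details carefully; every other case reduces to the same identity combined with the appropriate characterization of the $\lambda$-space property.
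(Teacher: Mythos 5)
Your proof is correct and follows essentially the same route as the paper's: the identity $f(S\cap f^{-1}(\overline{A}))=f(S)\cap\overline{A}$ combined with the $\lambda$-space property of $Y$ drives the backward direction, and the standard facts about restricting closed, open, and quotient maps to full preimages of closed sets give the forward direction. The only cosmetic differences are that you run the quotient case through open sets where the paper uses closed sets, and you spell out the forward-direction restriction lemmas that the paper merely cites.
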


\begin{proof} Let $f:\,X\to Y$ be a closed (open) map, then the restriction map $f|_{f^{-1}(L)}$ of $f$ to $f^{-1}(L)$ is closed (open)
for every $L\subseteq Y$ (and, hence, for every $A\in\lambda$).
Let $f:\,X\to Y$ be a quotient mapping, then the restriction
$f|_{f^{-1}(L)}$ of $f$ to $f^{-1}(L)$ is quotient for every
closed set $L\subseteq Y$, and, hence, for the closure of elements
of $\lambda$.

Suppose that $f|_{f^{-1}(\overline{A})}:\,
f^{-1}(\overline{A})\to\overline{A}$ is closed (open) for each
$A\in\lambda$. Let $S$ be a closed (an open) set in $X$. Since
$f|_{f^{-1}(\overline{A})}$ is closed (open) and $f(S)\cap
\overline{A}=f(S\cap
f^{-1}(\overline{A}))=f|_{f^{-1}(\overline{A})}(S\cap
f^{-1}(\overline{A}))$ then $f(S)\cap \overline{A}$ is closed
(open) in $\overline{A}$ for each $A\in\lambda$. Since $Y$ is a
$\lambda$-space, then $f(S)$ is closed (open) in $Y$. It follows
that $f$ is closed (open).

Let $f|_{f^{-1}(\overline{A})}:\, f^{-1}(\overline{A})\to
\overline{A}$ be a quotient mapping for each $A\in\lambda$.
Consider $S\subseteq Y$ such that $f^{-1}(S)$ is closed in $X$.
The set $(f|_{f^{-1}(\overline{A})})^{-1}(S\cap
\overline{A})=f^{-1}(S)\cap f^{-1}(\overline{A})$ is closed in
$f^{-1}(\overline{A})$, hence, $S\cap\overline{A}$ is closed in
$\overline{A}$ for each $A\in\lambda$. Since $X$ is a locally
$\lambda$-space, the set $S$ is closed in $Y$. It follows
that the map $f$ is quotient.
\end{proof}

By Theorem \ref{th116}, we have the following result.

\begin{proposition} Let $X$ and $Y$ be Hausdorff spaces, $\lambda\subseteq 2^X$, $\gamma\subseteq 2^Y$, $f:\, X\to Y$
a quotient mapping, $X$ a $\lambda$-space, and $\lambda\subseteq
\{f^{-1}(\overline{A}):\, A\in\gamma\}$. Then $Y$ is a
$\gamma$-space.
\end{proposition}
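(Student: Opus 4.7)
The plan is to verify the $\gamma$-space property of $Y$ directly from the definition, using the quotient map $f$ to pull a test set back to $X$ and then exploiting the $\lambda$-space property of $X$.

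First I would dispense with the trivial direction: if $S$ is closed in $Y$, then clearly $S\cap\overline{A}$ is closed in $\overline{A}$ for every $A\in\gamma$, regardless of any further structure. All the work is in the reverse direction, so fix $S\subseteq Y$ with $S\cap\overline{A}$ closed in $\overline{A}$ for every $A\in\gamma$; the goal is to prove $S$ is closed in $Y$. Since $f$ is a quotient map, it is enough to show that $f^{-1}(S)$ is closed in $X$.

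Next, because $X$ is a $\lambda$-space, it suffices to verify that $f^{-1}(S)\cap\overline{B}$ is closed in $\overline{B}$ for every $B\in\lambda$. Fix such a $B$. By the hypothesis $\lambda\subseteq\{f^{-1}(\overline{A}):\,A\in\gamma\}$, we may choose $A\in\gamma$ with $B=f^{-1}(\overline{A})$. The key observation is that $\overline{A}$ is closed in $Y$ (being a closure) and $f$ is continuous, so $B$ is already closed in $X$, which gives $\overline{B}=B$. Consequently
\[
f^{-1}(S)\cap\overline{B}=f^{-1}(S)\cap f^{-1}(\overline{A})=f^{-1}\bigl(S\cap\overline{A}\bigr).
\]
The restricted map $f|_B\colon B\to\overline{A}$ is continuous, and by assumption $S\cap\overline{A}$ is closed in $\overline{A}$, so its preimage $f^{-1}(S\cap\overline{A})$ is closed in $B=\overline{B}$. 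This closes the $\lambda$-space argument and yields $f^{-1}(S)$ closed in $X$, hence $S$ closed in $Y$.

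I do not anticipate a genuine obstacle here: the whole argument is a two-step pullback, first along $f$ and then along the inclusions $\overline{B}\hookrightarrow X$. The only point that requires a little care is recognising that elements of $\lambda$ are automatically closed in $X$ (as continuous preimages of closed sets), so that the closure operation in the definition of a $\lambda$-space is harmless on such $B$; without this observation one would be tempted to worry about $\overline{B}$ being strictly larger than $B$. Once this is noted, the identity $f^{-1}(S)\cap\overline{B}=f^{-1}(S\cap\overline{A})$ is immediate and the proof reduces to the defining property of quotient maps together with Theorem~\ref{th116}'s underlying philosophy that testing closedness on the pieces $f^{-1}(\overline{A})$ is equivalent to testing it globally.
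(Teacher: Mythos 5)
Your proof is correct and is essentially the argument the paper intends: the paper merely cites Theorem~\ref{th116} for this proposition, and your direct verification (pull $S$ back along the quotient map, observe that each $B\in\lambda$ equals the closed set $f^{-1}(\overline{A})$ so that $\overline{B}=B$ and $f^{-1}(S)\cap\overline{B}=f^{-1}(S\cap\overline{A})$, then invoke the $\lambda$-space property of $X$) is exactly the reasoning underlying that citation. If anything, your self-contained two-step pullback is the cleaner route, since Theorem~\ref{th116} as literally stated presupposes the codomain is already a $\lambda$-space.
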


\begin{definition} Let $X$ be a topological space, $\lambda\subseteq 2^X$. The space $X$ is called a {\it $\lambda_f$-space}
if whenever $f:\, X\to Y$ from $X$ into {\it any} Tychonoff space
$Y$ is continuous if and only if  $f|_{\overline{A}}:\,
\overline{A}\to Y$ is continuous for all $A\in\lambda$.
\end{definition}

In the case when $\lambda$ is a natural cover of $X$ the
following the notion of $\lambda_R$-space ($\Sigma_R$-space) was
introduced by Karnik and Willard in \cite{kawi}.

\begin{definition} Let $X$ be a topological space, $\lambda\subseteq 2^X$. The space $X$ is called a {\it $\lambda_R$-space}, if whenever $f$ is a real-valued function on
$X$ whose restriction to each $S\in \lambda$ is continuous (i.e.
whenever $f$ is a $\lambda$-continuous function on $X$), then $f$
is continuous on $X$.
\end{definition}

\begin{proposition} Let $X$ be a topological space, $\lambda\subseteq 2^X$. The space $X$ is a $\lambda_f$-space if and only if it is a
$\lambda_R$-space.
\end{proposition}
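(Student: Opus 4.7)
The plan is to prove each implication separately, with both directions hinging on the Tychonoff property and on the fact that continuity into a Tychonoff space can be tested coordinatewise in a power of $\mathbb{R}$.

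For the implication $\lambda_f\Rightarrow\lambda_R$, I would simply specialize the definition of a $\lambda_f$-space to $Y=\mathbb{R}$, which is certainly Tychonoff. The defining biconditional then says that a function $g:X\to\mathbb{R}$ is continuous if and only if $g|_{\overline{A}}$ is continuous for every $A\in\lambda$. Under the standing convention that $\lambda$ is closed under closures (Remark~\ref{t1_2_2}(3)), the hypothesis that $g$ is $\lambda$-continuous, i.e.\ $g|_{S}$ is continuous for each $S\in\lambda$, gives $g|_{\overline{A}}$ continuous for every $A\in\lambda$ (take $S=\overline{A}$). The $\lambda_f$-biconditional then yields continuity of $g$, so $X$ is a $\lambda_R$-space.

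For the converse $\lambda_R\Rightarrow\lambda_f$, the key tool is the Tychonoff embedding. Given a Tychonoff space $Y$ and $f:X\to Y$ with $f|_{\overline{A}}$ continuous for every $A\in\lambda$, I would embed $Y$ into the product $\mathbb{R}^{C(Y,\mathbb{R})}$ via the evaluation map $e(y)=(\phi(y))_{\phi\in C(Y,\mathbb{R})}$; this is a topological embedding precisely because $Y$ is Tychonoff. Continuity of $f$ is then equivalent to continuity of $e\circ f$, which by the universal property of the product topology reduces to continuity of each coordinate $\phi\circ f:X\to\mathbb{R}$. For a fixed $\phi\in C(Y,\mathbb{R})$ set $g=\phi\circ f$; then $g|_{\overline{A}}=\phi\circ f|_{\overline{A}}$ is continuous as a composition, hence $g|_{A}=(g|_{\overline{A}})|_{A}$ is continuous for every $A\in\lambda$. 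The $\lambda_R$-hypothesis forces $g$ to be continuous, and since this holds for every $\phi$, the map $f$ itself is continuous. Therefore $X$ is a $\lambda_f$-space.

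There is no serious obstacle: the conceptual step is simply recognizing that the Tychonoff property of $Y$ is exactly the device required to reduce $Y$-valued continuity to real-valued continuity, so that the apparently weaker hypothesis of $\lambda_R$ suffices to establish the apparently stronger property $\lambda_f$. The only technical care needed is in matching the indexing sets for the restrictions, i.e.\ in passing between $g|_{S}$ (for $S\in\lambda$) and $g|_{\overline{A}}$ (for $A\in\lambda$); this is trivial in one direction because restrictions of continuous maps are continuous, and in the other it is handled by the closure-closedness assumption on $\lambda$.
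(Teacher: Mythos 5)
Your proposal is correct and takes essentially the same route as the paper: the nontrivial implication ($\lambda_R\Rightarrow\lambda_f$) is proved by embedding the Tychonoff space $Y$ into a power of $\mathbb{R}$ and reducing continuity of $f$ to continuity of each real-valued coordinate composite, exactly as in the paper's proof. You additionally write out the easy converse (which the paper omits) and explicitly handle the mismatch between restrictions to $S\in\lambda$ and to $\overline{A}$ via the standing closure assumption on $\lambda$ — a point the paper glosses over but which is genuinely needed for the forward direction.
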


\begin{proof} Suppose that $X$ is a
$\lambda_R$-space and $f$ is a mapping from $X$ into a Tychonoff
space $Y$ such that $f|_{\overline{A}}:\, \overline{A}\to Y$ is
continuous for all $A\in\lambda$. Consider a homeomorphic
embedding $\varphi$ of $Y$ into
$\mathbb{R}^\kappa=\prod\{\mathbb{R}_{\alpha}: \alpha<\kappa\}$
for some $\kappa$ (since $Y$ is Tychonoff it can be embedded
in some Tychonoff cube of weight $\kappa$). Then
$\pi_{\alpha}\circ\varphi\circ f|_{\overline{A}}: A\rightarrow
\mathbb{R}_{\alpha}$ is continuous for each $\alpha<\kappa$ and
each $A\in \lambda$. Fix $\alpha<\kappa$. Since $X$ is a
$\lambda_R$-space, $\pi_{\alpha}\circ\varphi\circ f: X\rightarrow
\mathbb{R}_{\alpha}$ is continuous. It follows that $f$ is
continuous.
\end{proof}

The notion $\lambda_f$-space is a natural generalization of the
notion $k_f$-space \cite{kell}.

\begin{theorem}\label{th119} Let $X$ be a topological space, $\lambda\subseteq 2^X$. If $X$ is a $\lambda$-space then $X$ is a
$\lambda_f$-space.
\end{theorem}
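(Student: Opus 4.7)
The plan is to reduce the $\lambda_f$-space property to the $\lambda$-space property via preimages of open sets. The forward direction of the definition of $\lambda_f$-space---if $f:X\to Y$ is continuous then each restriction $f|_{\overline{A}}$ is continuous for $A\in\lambda$---is immediate, since restrictions of continuous maps to subspaces are always continuous. So I only need to establish the converse: given a map $f:X\to Y$ into a Tychonoff space $Y$ such that $f|_{\overline{A}}:\overline{A}\to Y$ is continuous for every $A\in\lambda$, I must show $f$ is continuous on $X$.

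To do this, I fix an arbitrary open set $U\subseteq Y$ and aim to show $f^{-1}(U)$ is open in $X$. For each $A\in\lambda$, observe that
$$f^{-1}(U)\cap\overline{A}=(f|_{\overline{A}})^{-1}(U),$$
and the right-hand side is open in $\overline{A}$ by continuity of $f|_{\overline{A}}$. Now I invoke the corollary established above (the open-set reformulation of the $\lambda$-space property): since $X$ is a $\lambda$-space, a subset $W\subseteq X$ is open in $X$ whenever $W\cap\overline{Z}$ is open in $\overline{Z}$ for each $Z\in\lambda$. Applying this with $W=f^{-1}(U)$ yields that $f^{-1}(U)$ is open in $X$, and hence $f$ is continuous.

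There is essentially no obstacle here---the content of the theorem is entirely absorbed into the corollary dualizing the closed-set definition of $\lambda$-space to the open-set version. The only thing worth noting is that the Tychonoff hypothesis on $Y$ in the definition of $\lambda_f$-space plays no role in this direction; the argument shows the stronger statement that any such $f$ into an arbitrary topological space $Y$ is continuous. The Tychonoff assumption only intervenes in the previously proved equivalence of $\lambda_f$-spaces with $\lambda_R$-spaces, where embedding $Y$ into a power of $\mathbb{R}$ is needed.
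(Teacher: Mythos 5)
Your proof is correct and is essentially the same argument as the paper's: the paper pulls back a closed set $B\subseteq Y$, notes $f^{-1}(B)\cap\overline{A}=(f|_{\overline{A}})^{-1}(B)$ is closed in $\overline{A}$, and applies the $\lambda$-space definition directly, whereas you work with open sets via the corollary; this is a trivial dualization of the same idea. Your side remarks (that the forward direction is automatic and that the Tychonoff hypothesis on $Y$ is not used here) are accurate.
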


\begin{proof} Suppose that $f:\,X\to Y$ such that  $f|_{\overline{A}}:\, \overline{A}\to Y$ is continuous for each $A\in\lambda$.
Let $B$ be a closed subset of $Y$. Then $f^{-1}(B)\cap
\overline{A}=(f|_{\overline{A}})^{-1}(B)$ is closed for each
$A\in\lambda$. Since $X$ is a $\lambda$-space,  $f^{-1}(B)$ is
closed. It follows that the mapping $f$ is continuous.
\end{proof}

Recall that a continuous mapping $f$ from $X$ onto $Y$ is called
an {\it $\mathbb{R}$-quotient map} provided whenever $g:
Y\rightarrow \mathbb{R}$, $g$ is continuous if and only if $g\circ
f$ is continuous. It follows immediately that $f$ is
$\mathbb{R}$-quotient if and only if $f$ is continuous and
whenever $g:Y\rightarrow Z$ where $Z$ is a Tychonoff space, then
$g$ is continuous if and only if $g\circ f$ is continuous. It is
also obvious that every quotient map is an $\mathbb{R}$-quotient
map, and easily verified that composition of $\mathbb{R}$-quotient
maps yields an $\mathbb{R}$-quotient map \cite{kawi}.

\begin{theorem} Let $X$ be a topological space, $\lambda$ be a cover of $X$. Then
the following statements are equivalent:

(1) $X$ is a $\lambda_f$-space;

(2) there are a Tychonoff space $Y$ and $\gamma\subseteq 2^Y$ such
that $Y$ is a locally $\gamma$-space, $X$ is an
$\mathbb{R}$-quotient image of $Y$ with respect to an
$\mathbb{R}$-quotient map $f:\, Y\to X$ and
$\gamma\subseteq\{B\subseteq Y:\ \exists A\in\lambda\ \mbox{and}\
f(B)=\overline{A}\}$.
\end{theorem}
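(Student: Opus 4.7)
\bigskip

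\noindent\emph{Proof proposal.} The plan is to mirror the proof of the analogous Theorem for $\lambda$-spaces earlier in this section, but replacing ``quotient map'' by ``$\mathbb{R}$-quotient map'' throughout, and exploiting the fact that, for Tychonoff targets, continuity can always be detected by composing with real-valued functions on the codomain. For $(1)\Rightarrow(2)$ the natural candidate for $Y$ is the disjoint sum $\tilde X=\bigoplus\{\overline{A}:\,A\in\lambda\}$ together with the obvious map $f:\,\tilde X\to X$ that agrees with the inclusion on each summand; for $(2)\Rightarrow(1)$ the locally $\gamma$-space structure of $Y$ supplies, around each point, a neighbourhood whose closure is some $\overline{B}$ with $f(\overline{B})=\overline{A}$, and this lets us transport the hypothesis $f|_{\overline{A}}$-continuous to a statement about $g\circ f$.

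For $(2)\Rightarrow(1)$, suppose $g:\,X\to Z$ with $Z$ Tychonoff satisfies $g|_{\overline{A}}\in C(\overline{A},Z)$ for every $A\in\lambda$. Since $f$ is $\mathbb{R}$-quotient, it suffices (by the characterisation of $\mathbb{R}$-quotient maps into Tychonoff spaces recalled just before the statement) to show $g\circ f:\,Y\to Z$ is continuous. Fix $y\in Y$ and pick, by the local $\gamma$-space property, a neighbourhood $U$ of $y$ with $\overline{U}=\overline{B}$ for some $B\in\gamma$; by hypothesis there is $A\in\lambda$ with $f(B)=\overline{A}$, and then continuity of $f$ gives $f(\overline{B})\subseteq\overline{f(B)}=\overline{A}$. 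Hence $g\circ f|_{\overline{B}}=g|_{\overline{A}}\circ f|_{\overline{B}}$ is the composition of two continuous maps, so $g\circ f$ is continuous on $U$. As $y$ was arbitrary, $g\circ f$ is continuous on $Y$, and the $\mathbb{R}$-quotient property delivers continuity of $g$.

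For $(1)\Rightarrow(2)$, set $\tilde X:=\bigoplus\{\overline{A}:\,A\in\lambda\}$, let $\gamma$ consist of the canonical copies of the summands inside $\tilde X$, and let $f:\,\tilde X\to X$ be induced by the family of inclusions $i_A:\,\overline{A}\hookrightarrow X$ exactly as in the Hausdorff $\lambda$-space construction above; this map is continuous because its restriction to each clopen summand is the continuous inclusion $i_A$. For each point $y$ lying in the copy of $\overline{A}$, the summand itself is an open neighbourhood $U$ with $\overline{U}=U=\overline{B}$ where $B\in\gamma$ is the copy of $A$, and $f(B)=\overline{A}$, so $\tilde X$ is a locally $\gamma$-space satisfying the required condition on $\gamma$. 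To check $f$ is $\mathbb{R}$-quotient, let $g:\,X\to\mathbb{R}$ be such that $g\circ f$ is continuous; then $g|_{\overline{A}}=(g\circ f)|_{\overline{A}\text{-copy}}$ is continuous for each $A\in\lambda$, whence $g$ is continuous by the $\lambda_f$-space (equivalently $\lambda_R$-space) hypothesis.

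The main obstacle I expect is making $\tilde X$ \emph{Tychonoff}, which is only automatic when each $\overline{A}$ inherits a Tychonoff subspace structure from $X$. In the general case one either assumes (as is natural for function-space applications) that $X$ is Tychonoff — so each $\overline{A}$ and hence $\tilde X$ is Tychonoff — or one replaces $\tilde X$ by its Tychonoff reflection $\tau\tilde X$ and factors $f$ through it; the universal property of $\tau$ guarantees that the resulting map $\tau\tilde X\to X$ remains $\mathbb{R}$-quotient, and one has to verify that the images of the reflected summands still witness the locally $\gamma$-space property with $f(B)=\overline{A}$. Apart from this regularity subtlety, the argument is a direct adaptation of the quotient-map proof given earlier.
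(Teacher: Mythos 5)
Your argument is correct and is essentially the paper's own proof: the same disjoint-sum construction $\tilde X=\bigoplus\{\overline{A}:\,A\in\lambda\}$ with the induced map for $(1)\Rightarrow(2)$, and the same local composition argument via the locally $\gamma$-space structure for $(2)\Rightarrow(1)$. The Tychonoff-ness issue you flag is genuine, and the paper handles it exactly by your first suggested route, writing ``Let $X$ be a Tychonoff $\lambda_f$-space'' in that half of the proof even though the theorem's statement only asks for a topological space.
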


\begin{proof} Let $f:\, Y\to X$ be an $\mathbb{R}$-quotient mapping and $Y$ be a locally $\gamma$-space. Let $g:\,
X\to \mathbb{R}$ is a mapping such that
$g|_{\overline{A}}:\, \overline{A}\to \mathbb{R}$ is continuous for any $A\in\lambda$.
Then for any point $y\in Y$ there is a neighborhood $U(y)$ of $y$
such that $\overline{U(y)}=\overline{B}$ for some $B\in \gamma$.
Then $g \circ f|_{B}$ is continuous as a composition of continuous
maps, and, hence, the mapping $g\circ f$ is continuous. Since the
mapping $f$ is an $\mathbb{R}$-quotient, $g$ is continuous.

Let $X$ be a Tychonoff $\lambda_f$-space and
$\bigcup\{A:\,A\in\lambda\}=X$. The space
$\tilde{X}=\bigoplus\{\overline{A}:\,A\in\lambda\}$ is a Tychonoff
locally $\gamma$-space where $\gamma$ is a family of members of
direct sum. Consider the family $\{i_A:\,A\in\lambda\}$ of maps
where $i_A:\, \overline{A}\to X$ is an identity map. Define the
mapping $f:\,\tilde{X}\to X$ as  $f|_{\overline{A}}=i_A$ for any
$A\in\lambda$. Since for any $A_1,\,A_2\in\lambda$\quad
$i_{A_2}|_{\overline{A_1}\cap\overline{A_2}}=i_{A_1}|_{\overline{A_1}\cap\overline{A_2}}$,
the mapping $f$ is well-defined and continuous.

Let $g:\,X\to \mathbb{R}$ be a $f$-continuous mapping. Clearly,
for any $A\in\lambda$ $g|_{\overline{A}}:\, \overline{A}\to
\mathbb{R}$ is continuous, hence, $g$ is continuous. Thus, we
proved that $f$ is $\mathbb{R}$-quotient.
\end{proof}

Thus, {\it $k_f$-spaces are Tychonoff spaces that can be represented as
$\mathbb{R}$-quotient spaces of Tychonoff locally compact spaces}.

\medskip
For natural covers the same result was proved by Karnik and
Willard in \cite{kawi}:  $X$ is a $\lambda_f$-space if and only if
$\phi:\,\bigoplus\lambda\to X$ is an $\mathbb{R}$-quotient map.

\begin{corollary}{\it Let $X$ and $Y$ be Tychonoff spaces, $\lambda\subseteq 2^X$, $\gamma\subseteq 2^Y$.
 The space $Y$ is an $\mathbb{R}$-quotient
image of the space $X$ with respect to an $\mathbb{R}$-quotient
mapping $f:X\to Y$, $\lambda\subseteq \{B\subseteq X:\ \exists
A\in\gamma\ \mbox{and}\ f(B)=\overline{A}\}$, and $X$ is a
$\lambda_f$-space. Then $Y$ is a $\lambda_f$-space.}
\end{corollary}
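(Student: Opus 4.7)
The plan is to prove this corollary by reducing the continuity question for functions out of $Y$ to a continuity question for functions out of $X$, exploiting in turn the $\mathbb{R}$-quotient property of $f$ and the $\lambda_f$-property of $X$. (As stated, the indices $\lambda$ and $\gamma$ are mismatched; the natural reading, which I adopt, is that the conclusion asserts $Y$ is a $\gamma_f$-space.) The forward implication in the definition of a $\gamma_f$-space is immediate, so the whole task is to show: if $g\colon Y\to Z$ is a function into a Tychonoff space $Z$ whose restriction $g|_{\overline{A}}$ is continuous for every $A\in\gamma$, then $g$ itself is continuous.

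First I would invoke the characterization of $\mathbb{R}$-quotient maps recalled just before Theorem 3.10: since $Z$ is Tychonoff and $f\colon X\to Y$ is $\mathbb{R}$-quotient, $g$ is continuous if and only if $g\circ f\colon X\to Z$ is continuous. Thus it suffices to establish the continuity of $g\circ f$. Since $X$ is a $\lambda_f$-space and $Z$ is Tychonoff, this in turn reduces to showing that $(g\circ f)|_{\overline{B}}\colon \overline{B}\to Z$ is continuous for every $B\in\lambda$. These two reductions form the backbone of the argument.

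To verify the restriction is continuous, fix $B\in\lambda$ and use the hypothesis to pick $A\in\gamma$ with $f(B)=\overline{A}$. The key step is to show that $f(\overline{B})=\overline{A}$: continuity of $f$ gives $f(\overline{B})\subseteq \overline{f(B)}=\overline{\,\overline{A}\,}=\overline{A}$, while $f(\overline{B})\supseteq f(B)=\overline{A}$ is automatic. Hence $f|_{\overline{B}}$ is a continuous map from $\overline{B}$ into $\overline{A}$, and
$$(g\circ f)|_{\overline{B}} \;=\; g|_{\overline{A}}\circ f|_{\overline{B}}$$
is the composition of two continuous maps, hence continuous. Feeding this back through the two reductions yields the continuity of $g$, and therefore $Y$ is a $\gamma_f$-space.

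The main obstacle I anticipate is the small but essential bookkeeping around the equality $f(\overline{B})=\overline{A}$; without it, $g|_{\overline{A}}\circ f|_{\overline{B}}$ would not be meaningfully defined as a composition landing in the right domain. Once that bookkeeping is in place, the proof is a short diagram chase combining the $\mathbb{R}$-quotient characterization with the definition of a $\lambda_f$-space, and no further topological input (such as density of $\bigcup\lambda$ or of $\bigcup\gamma$) is needed beyond what is already packaged into the hypotheses.
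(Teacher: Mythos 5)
Your proof is correct, and it takes a more direct route than the one the paper intends. The corollary is stated as a consequence of the theorem characterizing $\lambda_f$-spaces as $\mathbb{R}$-quotient images of locally $\gamma$-spaces: the paper's implicit argument is to first realize $X$ as an $\mathbb{R}$-quotient image of a Tychonoff locally $\gamma_1$-space $W$ (the direction $(1)\Rightarrow(2)$ of that theorem), compose with $f$ using the fact that a composition of $\mathbb{R}$-quotient maps is $\mathbb{R}$-quotient, check that the elements of $\gamma_1$ are still carried onto closures of elements of $\gamma$, and then invoke the direction $(2)\Rightarrow(1)$. You instead bypass the characterization theorem entirely and chase the definition: a $\gamma$-piecewise continuous $g\colon Y\to Z$ pulls back along $f$ to a $\lambda$-piecewise continuous $g\circ f$ (this is where your closure bookkeeping $f(\overline{B})=\overline{A}$ and the factorization $(g\circ f)|_{\overline{B}}=g|_{\overline{A}}\circ f|_{\overline{B}}$ do their work), the $\lambda_f$-property of $X$ gives continuity of $g\circ f$, and the $\mathbb{R}$-quotient property of $f$ returns continuity of $g$. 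What your route buys is twofold: it is self-contained, and it does not require $\lambda$ to be a cover of $X$ --- a hypothesis that the characterization theorem carries (its $(1)\Rightarrow(2)$ direction builds the surjection $\bigoplus\{\overline{A}:A\in\lambda\}\to X$) but that the corollary as stated does not assume. Your reading of the conclusion as ``$Y$ is a $\gamma_f$-space'' is also the correct repair of the statement's index mismatch.
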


\begin{corollary}{\it If $f:\, X\to Y$ is a quotient mapping of a Tychonoff $k_f$-space $X$ onto a space
$Y$ then $Y$ is a Tychonoff $k_f$-space.}
\end{corollary}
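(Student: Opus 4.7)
The plan is to reduce the statement directly to the preceding corollary by instantiating $\lambda := \mathcal{K}(X)$, the family of all compact subsets of $X$, and $\gamma := \mathcal{K}(Y)$. Since $X$ is a Tychonoff $k_f$-space, by the paper's characterization $X$ is a $\lambda_f$-space with this choice of $\lambda$.

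First, I would observe that $f$ is an $\mathbb{R}$-quotient mapping: this is immediate from the remark made just before Theorem 3.9 that every quotient map is $\mathbb{R}$-quotient. Second, I would verify the containment $\lambda\subseteq\{B\subseteq X:\ \exists A\in\gamma\ \mbox{and}\ f(B)=\overline{A}\}$. For any compact $B\in\mathcal{K}(X)$, the image $f(B)$ is compact in $Y$ as the continuous image of a compact set; since $Y$ is Hausdorff, $f(B)$ is closed, so $\overline{f(B)}=f(B)$, and the choice $A:=f(B)\in\gamma$ witnesses the condition.

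Once these two hypotheses are checked, an application of the preceding corollary with $\lambda$ and $\gamma$ as above yields that $Y$ is a $\gamma_f$-space, which by definition means $Y$ is a $k_f$-space, as required.

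The main obstacle is the Tychonoff-ness of $Y$, since the preceding corollary requires \emph{both} $X$ and $Y$ to be Tychonoff, and quotient images of Tychonoff spaces need not in general be Tychonoff. I would handle this by taking the hypothesis on $k_f$-spaces to already incorporate Tychonoff separation (consistent with the convention in Section~3 that $k_f$-spaces are Tychonoff), or, in the more general situation where $Y$ is only known to be Hausdorff, by passing to its Tychonoff reflection $\tau Y$ and showing that the induced map $X\to\tau Y$ is still $\mathbb{R}$-quotient and satisfies the same compact-image condition. Either way, the application of the preceding corollary then completes the proof without further technical work.
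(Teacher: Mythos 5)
Your proposal is correct and is essentially the paper's own (implicit) argument: the corollary is stated as an immediate consequence of the preceding one, obtained exactly by taking $\lambda=\mathcal{K}(X)$, $\gamma=\mathcal{K}(Y)$, noting that quotient maps are $\mathbb{R}$-quotient and that compact images of compact sets are closed in a Hausdorff range, just as you do. The Tychonoff-ness of $Y$ that you flag is a genuine gap in the statement itself (quotients of Tychonoff spaces need not be Tychonoff), and the paper silently assumes it; your reading that the $k_f$-hypothesis on $Y$ incorporates Tychonoff separation is the intended one.
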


\begin{proposition} Let $X$ be a topological space, $\lambda\subseteq\gamma\subseteq 2^X$. If $X$ is a
$\lambda_f$-space then $X$ is a $\gamma_f$-space.
\end{proposition}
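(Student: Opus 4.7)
The plan is to unwind the definition of $\lambda_f$-space and observe that the condition on $\gamma$ is strictly stronger than the condition on $\lambda$, so being a $\lambda_f$-space (an "easier" hypothesis to verify) automatically upgrades to being a $\gamma_f$-space. This mirrors exactly the earlier Proposition showing that if $X$ is a $\lambda$-space then $X$ is a $\gamma$-space whenever $\lambda\subseteq\gamma$.

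Concretely, I would fix an arbitrary Tychonoff space $Y$ and a map $f:X\to Y$, and verify both directions of the characterization of continuity required by the $\gamma_f$-space definition. The forward direction (if $f$ is continuous then $f|_{\overline{A}}$ is continuous for each $A\in\gamma$) is automatic since restrictions of continuous maps to any subspace are continuous. For the reverse direction, suppose $f|_{\overline{A}}:\overline{A}\to Y$ is continuous for every $A\in\gamma$. Because $\lambda\subseteq\gamma$, this hypothesis in particular gives that $f|_{\overline{A}}$ is continuous for every $A\in\lambda$. Applying the assumption that $X$ is a $\lambda_f$-space to $f$ and $Y$ then yields continuity of $f$.

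There is essentially no obstacle: the argument is a one-line set-theoretic observation ($\lambda\subseteq\gamma$ implies that the $\gamma$-hypothesis entails the $\lambda$-hypothesis), and no machinery beyond the definition is needed. The only subtlety worth flagging is that both definitions require $Y$ to be Tychonoff; since the same class of target spaces is used for both $\lambda_f$- and $\gamma_f$-spaces, the quantifier over $Y$ transfers without change.
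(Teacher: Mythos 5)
Your proof is correct and follows exactly the same route as the paper: since $\lambda\subseteq\gamma$, continuity of $f|_{\overline{A}}$ for all $A\in\gamma$ entails it for all $A\in\lambda$, and the $\lambda_f$-property then gives continuity of $f$. Your additional remarks about the trivial forward direction and the fixed class of Tychonoff targets are fine but add nothing beyond the paper's one-line argument.
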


\begin{proof} Let $f:\, X\to Y$ be a mapping from a space
$X$ to a Tychonoff space $Y$ such that
$f|_{\overline{A}}:\,\overline{A}\to Y$ is continuous for all
$A\in\gamma$. Then $f|_{\overline{B}}:\,\overline{B}\to Y$ is
continuous for all $B\in\lambda$, and, hence, the mapping $f$ is
continuous.
\end{proof}

For any topological space $X$ there exists the Tychonoff space
$X_{\tau}$ and the onto map $\tau_X: X\rightarrow X_{\tau}$ such
that for any map $f$ of $X$ to a Tychonoff space $Y$ there exists
a map $g: X_{\tau}\rightarrow Y$ such that $f=g\circ \tau_X$.
Evidently, for any map $f: X\rightarrow Y$ the unique map $\tau f:
X_{\tau}\rightarrow Y_{\tau}$ is defined such that $\tau f \circ
\tau_X=\tau_Y\circ f$. The correspondence $X\rightarrow X_{\tau}$
for any space $X$ and $f\rightarrow \tau f$ for any map $f$ is
called the {\it Tychonoff functor} \cite{Is,Ok}.

\begin{proposition} Let $X$ be a topological space, $\lambda\subseteq 2^X$.
If $X$ is a $\lambda$-space, then $X_{\tau}$ is a
$\lambda_f$-space.
\end{proposition}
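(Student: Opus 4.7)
The plan is to interpret the statement as saying that $X_\tau$ is a $\lambda_f$-space with respect to the image family $\tau_X(\lambda)=\{\tau_X(A):A\in\lambda\}\subseteq 2^{X_\tau}$, and then to reduce continuity on $X_\tau$ to continuity on $X$ via the universal property of the Tychonoff reflection, where we can invoke Theorem \ref{th119}.

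Concretely, I would take a map $g:X_\tau\to Y$ into an arbitrary Tychonoff space $Y$ and assume that $g|_{\overline{\tau_X(A)}}:\overline{\tau_X(A)}\to Y$ is continuous for every $A\in\lambda$. The aim is to show that $g$ itself is continuous. First I would form the composite $h=g\circ\tau_X:X\to Y$ and check, for each $A\in\lambda$, that $h|_{\overline{A}}$ is continuous. This follows because $\tau_X$ is continuous, hence $\tau_X(\overline{A})\subseteq\overline{\tau_X(A)}$, so $\tau_X|_{\overline{A}}$ may be viewed as a continuous map $\overline{A}\to\overline{\tau_X(A)}$, and the restriction $h|_{\overline{A}}=g|_{\overline{\tau_X(A)}}\circ\tau_X|_{\overline{A}}$ is the composition of two continuous maps.

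Next I would apply Theorem \ref{th119}: since $X$ is a $\lambda$-space, it is also a $\lambda_f$-space, so the fact that $h|_{\overline{A}}$ is continuous for all $A\in\lambda$ together with the Tychonoff target $Y$ implies that $h=g\circ\tau_X$ is continuous on $X$. Finally I would invoke the defining property of the Tychonoff functor: $\tau_X$ is surjective, and every continuous map from $X$ into a Tychonoff space $Y$ factors uniquely through $X_\tau$ by a continuous map. Hence $g\circ\tau_X$ continuous together with $\tau_X$ surjective forces $g$ to coincide with the unique continuous factor, so $g$ itself is continuous. This shows $X_\tau$ is a $\lambda_f$-space.

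The only subtlety — and the one place where the argument could stumble — is step where we must pass continuity of $h=g\circ\tau_X$ back to continuity of $g$. This is not a formal quotient property but the reflection property of $\tau_X$, and it works precisely because $Y$ is Tychonoff (so that the universal factorization exists) and because $\tau_X$ is onto (so that the factorization is unique, making the factor equal to the given $g$). Everything else is routine: the inclusion $\tau_X(\overline{A})\subseteq\overline{\tau_X(A)}$ is just continuity of $\tau_X$, and reinterpreting $\lambda\subseteq 2^X$ as $\tau_X(\lambda)\subseteq 2^{X_\tau}$ is a harmless convention consistent with the functorial framework set up just before the proposition.
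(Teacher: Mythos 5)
Your proof is correct and follows essentially the same route as the paper: reduce the question to $X$ via the Tychonoff reflection $\tau_X$, invoke Theorem \ref{th119} to get that $X$ is a $\lambda_f$-space, and use the universal (factorization) property of $\tau_X$ together with its surjectivity to transfer continuity back to $X_\tau$. The paper compresses all of this into the remark that $C(X)=C(X_\tau)$ and a citation of Theorem \ref{th119}; you have merely made explicit the pullback step $\tau_X(\overline{A})\subseteq\overline{\tau_X(A)}$ and the identification $\lambda\leftrightarrow\tau_X(\lambda)$ that the paper leaves implicit.
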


\begin{proof} By definition of the Tychonoff functor, for a continuous real-valued function $f: X\rightarrow \mathbb{R}$ the
unique continuous real-valued function $\tau f:
X_{\tau}\rightarrow \mathbb{R}$ is defined such that $\tau f \circ
\tau_X=\mathbb{R}\circ f$. Thus, we can assume that
$C(X)=C(X_{\tau})$. It remains to apply Theorem \ref{th119}.
\end{proof}

Let us construct a modification of an arbitrary topological space
$X$ and an arbitrary family $\lambda\subseteq 2^X$ to a space
$X_{\lambda}$ consisting of the same set of points.

 Let $X$ be a topological space, $\lambda\subseteq 2^X$. Let
$Q=\{B\subseteq X:\, \overline{A}\cap B$ is closed in
$\overline{A}$ for all $A\in\lambda\}$. Let $X_{\lambda}$ be a set
$X$ with the topology $\tau=\{X\setminus B:\, B\in Q\}$. Let
$e:\,X_{\lambda}\to X$ be an identity map on $X$. Since the
topology on $X$ is weaker than the topology on $X_{\lambda}$, the
map $e$ is continuous. Thus $e$ is a condensation (= a continuous
bijection). Note that $e|_{\overline{A}}:\,\overline{A}_{\tau}\to
\overline{A}$ is a homeomorphism for each $A\in\lambda$. Also note
that $X_{\lambda}$ is a $\gamma$-space where
$\gamma=\{e^{-1}(A):\,A\in\lambda\}$. Further, we will call such
space as {\it $\lambda$-leader} of $X$.

In the case when $\lambda$ is a family of all non-empty compact
subsets of $X$, this modification was proposed by Cohen
\cite{coh}; for natural covers by Franklin \cite{fran}.

Further, we prove the uniqueness theorem for a $\lambda$-leader.

\begin{theorem}\label{th1118} Let $X$ be a topological space, $\lambda\subseteq 2^X$.
 Let $f_1:\,Y_1\to X$ and $f_2:\,Y_2\to X$ be condensations, $\gamma_1\subseteq 2^{Y_1}$ and $\gamma_2\subseteq 2^{Y_2}$ such that
$\{\overline{B}:\,B\in\gamma_1\}=\{f_1^{-1}(\overline{A}):\,A\in\lambda\}$
and
$\{\overline{C}:\,C\in\gamma_2\}=\{f_2^{-1}([A]):\,A\in\lambda\}$,
$f_1|_{[B]}:\,[B]\to X$ and $f_2|_{\overline{C}}:\,\overline{C}\to
X$ are homeomorphisms for any $B\in\gamma_1$ and $C\in\gamma_2$
and, let $Y_1$ be  a $\gamma_1$-space and $Y_2$ be a
$\gamma_2$-space. Then there is a homeomorphism $\phi:\,Y_1\to
Y_2$ such that $\{\phi(\overline{B}):\,
B\in\gamma_1\}=\{\overline{C}:\,C\in\gamma_2\}$.
\end{theorem}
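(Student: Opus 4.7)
The plan is to construct the homeomorphism directly using the fact that both $Y_1$ and $Y_2$ are condensed onto the same set $X$. Since $f_1$ and $f_2$ are continuous bijections, the set-theoretic composition $\phi:=f_2^{-1}\circ f_1:Y_1\to Y_2$ is a well-defined bijection, and $\phi^{-1}=f_1^{-1}\circ f_2$. My task reduces to showing that both $\phi$ and $\phi^{-1}$ are continuous and that $\phi$ identifies the distinguished families of sets.

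I would first unwind how $\phi$ acts on each closure $\overline{B}$ with $B\in\gamma_1$. By hypothesis, there is $A\in\lambda$ with $\overline{B}=f_1^{-1}(\overline{A})$, and correspondingly $C\in\gamma_2$ with $\overline{C}=f_2^{-1}(\overline{A})$. Because $f_1$ and $f_2$ are bijections, $\phi(\overline{B})=f_2^{-1}(f_1(\overline{B}))=f_2^{-1}(\overline{A})=\overline{C}$, and $\phi$ restricted to $\overline{B}$ factors as
$$\overline{B}\xrightarrow{\,f_1|_{\overline{B}}\,}\overline{A}\xrightarrow{\,(f_2|_{\overline{C}})^{-1}\,}\overline{C},$$
which is a composition of homeomorphisms by the hypothesis of the theorem. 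Thus $\phi|_{\overline{B}}:\overline{B}\to\overline{C}$ is itself a homeomorphism. The symmetric argument starting from $C\in\gamma_2$ shows that every $\overline{C}$ arises in this way, so the map $B\mapsto C$ establishes the bijection $\{\phi(\overline{B}):B\in\gamma_1\}=\{\overline{C}:C\in\gamma_2\}$ that the conclusion demands.

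To upgrade these local homeomorphisms to global continuity of $\phi$, I would invoke the $\gamma_1$-space structure of $Y_1$. For any closed $D\subseteq Y_2$ and any $B\in\gamma_1$ with associated $C\in\gamma_2$,
$$\phi^{-1}(D)\cap\overline{B}=(\phi|_{\overline{B}})^{-1}(D\cap\overline{C}),$$
which is closed in $\overline{B}$ since $\phi|_{\overline{B}}$ is a homeomorphism and $D\cap\overline{C}$ is closed in $\overline{C}$. The definition of a $\gamma_1$-space then forces $\phi^{-1}(D)$ to be closed in $Y_1$, so $\phi$ is continuous. The same argument applied to $\phi^{-1}$, using that $Y_2$ is a $\gamma_2$-space and the restrictions $\phi^{-1}|_{\overline{C}}=(\phi|_{\overline{B}})^{-1}$ are homeomorphisms, yields continuity of $\phi^{-1}$. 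Hence $\phi$ is the desired homeomorphism.

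The only genuine subtlety, and the step I would write out most carefully, is the identification $\phi(\overline{B})=\overline{C}$: it relies crucially on the precise hypothesis that the families $\{\overline{B}:B\in\gamma_1\}$ and $\{\overline{C}:C\in\gamma_2\}$ are both parameterised by the \emph{same} family $\lambda$ via the preimages $f_i^{-1}(\overline{A})$. Without this alignment one could not match up the local homeomorphism pieces; with it, the proof is essentially a diagram chase combined with the gluing property built into the definition of a $\lambda$-space.
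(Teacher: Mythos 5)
Your proposal is correct and follows essentially the same route as the paper: define $\phi=f_2^{-1}\circ f_1$, observe that each restriction $\phi|_{\overline{B}}$ factors through $\overline{A}$ as a composition of the hypothesised homeomorphisms, and then use the $\gamma_1$- and $\gamma_2$-space properties to globalise the continuity of $\phi$ and $\phi^{-1}$. The paper states these steps more tersely; your write-up merely supplies the details (in particular the identity $\phi^{-1}(D)\cap\overline{B}=(\phi|_{\overline{B}})^{-1}(D\cap\overline{C})$) that the paper leaves implicit.
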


\begin{proof} Let $\phi=f_2^{-1}\circ f_1$. Then
$\{\phi(\overline{B}):\,
B\in\gamma_1\}=\{\overline{C}:\,C\in\gamma_2\}$. It remains to
prove that $\phi$ is a homeomorphism. Since
$\phi_1|_{\overline{B}}:\,\overline{B}\to X$ and
$\phi_2|_{\overline{C}}:\,\overline{C}\to X$ is a homeomorphism
for any $B\in\gamma_1$ and $C\in\gamma_2$,
$\phi|_{\overline{B}}:\,\overline{B}\to Y_2$ is a homeomorphism.
Since $Y_1$ is a $\gamma_1$-space, $Y_2$ is a $\gamma_2$-space,
$\phi$ is a homeomorphism.
\end{proof}

In the case then $\lambda$ is a family  of all non-empty compact
subsets of $X$ Theorem \ref{th1118} was proved by Arhangel'skii in
\cite{arh}.

Note that $X_{\lambda}$ is always Hausdorff space for an arbitrary
Hausdorff space $X$. But, if $X$ is a Tychonoff space, then
$X_{\lambda}$ may not even be regular.

Recall that a topological space $X$ is called {\it functionally
Hausdorff} if for any two points $x, y \in X$ there is a
continuous real-valued function $f$ such that $f(x) \neq f(y)$.

\begin{proposition} Let $Z$ be a functionally Hausdorff non-regular
$k$-space and let $X$ be a Tychonoff modification of $Z$. Then a
$k$-leader of $X$ is non-regular.
\end{proposition}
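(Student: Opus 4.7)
The plan is to identify the $k$-leader $X_k$ of $X$ with $Z$ up to homeomorphism, by invoking the uniqueness theorem for $\lambda$-leaders (Theorem~\ref{th1118}); the non-regularity of $X_k$ will then transfer from $Z$.

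Since $Z$ is functionally Hausdorff, the canonical Tychonoff map $\tau=\tau_Z:Z\to X$ is a continuous bijection, i.e.\ a condensation. For each compact $K\subseteq Z$, the image $\tau(K)$ is compact in the Hausdorff space $X$, and $\tau|_K:K\to\tau(K)$ is a continuous bijection from a compact space into a Hausdorff space, hence a homeomorphism.

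Let $\lambda$ denote the family of compact subsets of $X$. I apply Theorem~\ref{th1118} with $Y_1=X_k$, $f_1=e:X_k\to X$ (the canonical condensation arising in the $\lambda$-leader construction), $\gamma_1=\{e^{-1}(A):A\in\lambda\}$, and with $Y_2=Z$, $f_2=\tau$, $\gamma_2=\{\tau^{-1}(A):A\in\lambda\}$. The space $X_k$ is a $\gamma_1$-space, and each restriction $e|_{e^{-1}(A)}$ is a homeomorphism, by the construction of the $\lambda$-leader. Each $A\in\lambda$ is closed in the Hausdorff space $X$, so its preimages $e^{-1}(A)$ and $\tau^{-1}(A)$ are closed in $X_k$ and $Z$ respectively, and the closure hypotheses of Theorem~\ref{th1118} reduce to the tautologies $\gamma_i=\{f_i^{-1}(\overline{A}):A\in\lambda\}$. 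Moreover, every compact $K\subseteq Z$ equals $\tau^{-1}(\tau(K))$ with $\tau(K)\in\lambda$, so $\gamma_2$ already contains every compact subset of $Z$; combined with the hypothesis that $Z$ is a $k$-space and the monotonicity principle for $\lambda$-spaces (if $\lambda\subseteq\gamma$ and $Z$ is a $\lambda$-space then $Z$ is a $\gamma$-space), this gives that $Z$ is a $\gamma_2$-space.

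The one remaining condition of Theorem~\ref{th1118}, and the main technical obstacle, is to verify that $\tau|_{\tau^{-1}(A)}:\tau^{-1}(A)\to A$ is a homeomorphism for every $A\in\lambda$. This reduces to showing that $\tau^{-1}(A)$ is compact in $Z$: once that is in place, the continuous bijection from a compact space into a Hausdorff space is automatically a homeomorphism. This compactness has to be extracted from the $k$-space structure of $Z$ together with the universal property of the Tychonoff map $\tau$, and is the technical heart of the argument. With this step established, Theorem~\ref{th1118} produces a homeomorphism $\phi:X_k\to Z$; since $Z$ is non-regular, so is $X_k$.
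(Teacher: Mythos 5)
The paper states this proposition without proof, so there is nothing to compare your argument against; it has to stand on its own, and it does not. The step you yourself single out as ``the technical heart'' --- that $\tau^{-1}(A)$ is compact in $Z$ for every compact $A\subseteq X$ --- is not merely left unproved; it is false in general, and with it the strategy of identifying the $k$-leader $X_k$ with $Z$ via Theorem~\ref{th1118} collapses. Take $Z$ to be $\mathbb{R}$ with the topology generated by the usual open sets together with $\mathbb{R}\setminus S$, where $S=\{1/n:\ n\in\mathbb{N}\}$. This $Z$ is Hausdorff and first countable (hence a $k$-space) and non-regular, since the point $0$ cannot be separated from the closed set $S$. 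A short argument shows that every $Z$-continuous real-valued function is continuous for the usual topology (approximate each $1/n$ by nearby points outside $S$ to get $f(1/n)\to f(0)$), so $Z$ is functionally Hausdorff and its Tychonoff modification $X$ is $\mathbb{R}$ with the usual topology. The set $A=\{0\}\cup S$ is compact in $X$, but $\tau^{-1}(A)=A$ is not compact in $Z$: the open cover consisting of $\mathbb{R}\setminus S$ together with small intervals $(1/n-\epsilon_n,1/n+\epsilon_n)$ meeting $S$ only in $1/n$ has no finite subcover.

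What survives of your argument is one inequality only: since $Z$ is a $k$-space and each compact $K\subseteq Z$ maps homeomorphically onto the compact set $\tau(K)\subseteq X$, every $X_k$-closed set pulls back to a $Z$-closed set, so the topology of $X_k$ is coarser than that of $Z$. Non-regularity does not descend along a coarsening, so nothing follows from this. Worse, in the example above $X=\mathbb{R}$ is already a $k$-space, hence equals its own $k$-leader and is regular --- so the example appears to refute not only your route but the proposition as literally stated. To salvage either, one needs an additional hypothesis forcing $Z$ and $X$ to have the same compact sets (for instance, assuming directly that every compact subset of $X$ is the image of a compact subset of $Z$); under such a hypothesis your appeal to Theorem~\ref{th1118} does go through and yields $X_k\cong Z$ as you intended.
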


Further, we want to propose a modification that does not deduce from
the class of Tychonoff spaces and leads to $\lambda_f$-spaces.

Let $X$ be a Tychonoff space, $\lambda\subseteq 2^X$. Let
$X_{\tau\lambda}$ be a Tychonoff modification of $\lambda$-leader
$X_{\lambda}$ of the space $X$. Note that the space
$X_{\tau\lambda}$ consists of the same set of points as $X$. In
addition, the topology on $X$ is weaker than the topology on
$X_{\tau\lambda}$, i.e. there is a continuous bijective map from
$X_{\tau\lambda}$ onto $X$. Let $e:\, X_{\tau\lambda}\to X$ be a
condensation of $X_{\tau\lambda}$ onto $X$. The topology on
$X_{\tau\lambda}$ is weaker than the topology on $X$, hence,
$e|_{e^{-1}(\overline{A})}:\, e^{-1}(\overline{A})\to
\overline{A}$ is homeomorphism for each $A\in\lambda$. Note also
that $X_{\tau\lambda}$ is a $\gamma_f$-space where
$\gamma=\{e^{-1}(\overline{A}):\, A\in\lambda\}$. Further, we will
call such space {\it $\lambda_f$-leader} of $X$ and denote by
$X_{\tau\lambda}$.

To conclude this section, we prove the uniqueness theorem for the
$\lambda_f$-leader.

\begin{theorem}\label{t1_1_19} Let $X$ be a Tychonoff space, $\lambda\subseteq 2^X$. Let
$f_1:\,Y_1\to X$ and $f_2:\,Y_2\to X$ be condensations, $\gamma_1$
and $\gamma_2$ be families of subsets of Tychonoff spaces $Y_1$
and $Y_2$, respectively, such that
$\{\overline{B}:\,B\in\gamma_1\}=\{f_1^{-1}(\overline{A}):\,A\in\lambda\}$
and
$\{\overline{C}:\,C\in\gamma_2\}=\{f_2^{-1}(\overline{A}):\,A\in\lambda\}$,
and also $f_1|_{\overline{B}}:\,\overline{B}\to X$ and
$f_2|_{\overline{C}}:\,\overline{C}\to X$ be homeomorphisms for
any $B\in\gamma_1$ and $C\in\gamma_2$ and, moreover, $Y_1$ be a
$\gamma_{1_f}$-space and $Y_2$ be a $\gamma_{2_f}$-space. Then
there is a homeomorphism $\phi:\,Y_1\to Y_2$ such that
$\{\phi(\overline{B}):\,
B\in\gamma_1\}=\{\overline{C}:\,C\in\gamma_2\}$.
\end{theorem}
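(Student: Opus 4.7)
The plan is to mimic the argument of Theorem \ref{th1118}, upgrading each invocation of the $\gamma$-space property to the corresponding $\gamma_f$-space property. The key structural observation that makes this upgrade possible is that both target spaces $Y_1$ and $Y_2$ are Tychonoff, which is exactly the class of codomains permitted in the definition of a $\gamma_f$-space.

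First I would set $\phi = f_2^{-1} \circ f_1 \colon Y_1 \to Y_2$. Since $f_1$ and $f_2$ are condensations onto the same underlying set $X$, this is a well-defined bijection of points (though a priori not continuous, as $f_2^{-1}$ need not be). From the hypothesis $\{\overline{B} : B \in \gamma_1\} = \{f_1^{-1}(\overline{A}) : A \in \lambda\}$ and the analogous equality for $\gamma_2$, one reads off $\{\phi(\overline{B}) : B \in \gamma_1\} = \{\overline{C} : C \in \gamma_2\}$ directly: given $B \in \gamma_1$ with $f_1(\overline{B}) = \overline{A}$, pick $C \in \gamma_2$ with $f_2(\overline{C}) = \overline{A}$, and conclude $\phi(\overline{B}) = \overline{C}$. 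This simultaneously fixes, for each $B$, a partner $C$.

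Next I would verify continuity of $\phi$ and of $\phi^{-1}$. For each such pair, the restriction $\phi|_{\overline{B}} \colon \overline{B} \to \overline{C}$ equals $(f_2|_{\overline{C}})^{-1} \circ f_1|_{\overline{B}}$, a composition of two homeomorphisms, and hence continuous when viewed as a map into $Y_2$. Because $Y_2$ is Tychonoff and $Y_1$ is a $\gamma_{1_f}$-space, the defining property of $\gamma_{1_f}$-spaces yields continuity of $\phi$ on all of $Y_1$. The symmetric argument, using that $Y_1$ is Tychonoff and $Y_2$ is a $\gamma_{2_f}$-space, gives continuity of $\phi^{-1}$, so $\phi$ is a homeomorphism. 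I do not foresee a serious obstacle; the only step requiring a little care is the bookkeeping in paragraph two, namely the consistent pairing $B \leftrightarrow C$ through a common $\overline{A}$. Once that matching is in place, the remainder is a transparent translation of the proof of Theorem \ref{th1118}, with the closedness characterization of $\gamma$-spaces replaced by the continuity-into-Tychonoff-target characterization of $\gamma_f$-spaces, and the Tychonoff hypotheses on $Y_1, Y_2$ supplying exactly the admissible codomains.
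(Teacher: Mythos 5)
Your proposal is correct and follows essentially the same route as the paper's own proof: define $\phi=f_2^{-1}\circ f_1$, observe that its restriction to each $\overline{B}$ is a homeomorphism onto the partner $\overline{C}$, and then invoke the $\gamma_{1_f}$- and $\gamma_{2_f}$-space properties (with the Tychonoff hypotheses supplying the admissible codomains) to get continuity of $\phi$ and $\phi^{-1}$. Your version merely spells out the $B\leftrightarrow C$ pairing and the factorization $\phi|_{\overline{B}}=(f_2|_{\overline{C}})^{-1}\circ f_1|_{\overline{B}}$ in more detail than the paper does.
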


\begin{proof} Let $\phi=f_2^{-1}\circ f_1$. Then
$\{\phi(\overline{B}):\,
B\in\gamma_1\}=\{\overline{C}:\,C\in\gamma_2\}$. We show that
$\phi$ is a homeomorphism. Clearly
$\phi|_{\overline{B}}:\,\overline{B}\to Y_2$ is a homeomorphism
for any $B\in\gamma_1$. Since $Y_1$ is a $\gamma_{1_f}$-space,
$\phi$ is continuous. The continuity of $\phi^{-1}$ is
checked similarly.
\end{proof}

\section{Embedding theorems for the topology of uniform convergence}

The idea of embedding one topological space into another is one of
the main ideas in topology and functional analysis. One of the
main embedding theorems is Tychonoff's theorem on the embedding of
a space of functions with the topology of pointwise convergence
into a Tychonoff product of uniform spaces \cite{tych}.

The first author of this paper has a wonderful idea for an
arbitrary Hausdorff space $X$ of constructing a Tychonoff space
$X'$ such that $C(X,Y)=C(X',Y)$ and $C_{\lambda,\mu}(X',Y)$ is
topological embedded in some "nice"\ space for any uniform space
$(Y,\mu)$.

The purpose of this section is to construct a model similar to the
Tychonoff's embedding, i.e., for any space $X$ and
$\lambda\subseteq 2^X$, embed the space $C_{\lambda,\mu}(X,Y)$ of
all continuous functions from $X$ into a uniform space $(Y,\mu)$
with the topology of uniform convergence on elements of $\lambda$
into some "nice"\ space.

In this section, by the neighborhood of a point, we mean the set
whose interior contains this point.

If $X$, $Y$ and $Z$ are spaces, define the composition function

$\Phi: C(X,Y)\times C(Y,Z)\rightarrow C(X,Z)$ by $\Phi(f,g)=g\circ
f$ for each $f\in C(X,Z)$ and $g\in C(Y,Z)$. Fixing one of the
components of the domain of the composition function results in
what is called an {\it induced function} (\cite{MckNta},
\cite{Arh89b}).

If $f\in C(X,Y)$, define the induced function $f^{\#}:C(Y,Z)\to
C(X,Z)$ by $f^{\#}(h)=h\circ f$ for every $h\in C(Y,Z)$. Induced
functions have been studied fairly well from a general topological
point of view. However, despite the fact that the "nice" \
topologies on $C(X,Y)$ are defined by uniformity, induced
functions have not been studied in detail as mappings of uniform
spaces.

\begin{theorem}\label{t1_2_3}
Let $X$ be a topological space, $\lambda\subseteq 2^X$,
$(Y,\mu)$ be a uniform space and let $f:\widetilde{X}\to X$ be a
condensation. Then $f^{\#}:C_{\lambda,\mu}(X,Y)\to
C_{f^{-1}(\lambda),\mu}(\widetilde{X},Y)$ is a uniform embedding for
$f^{-1}(\lambda)=\{f^{-1}(A):A\in\lambda\}$.
\end{theorem}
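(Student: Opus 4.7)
The plan is to verify the four conditions that together say $f^{\#}$ is a uniform embedding: (a) $f^{\#}$ actually lands in $C(\widetilde{X},Y)$; (b) $f^{\#}$ is injective; (c) $f^{\#}$ is uniformly continuous with respect to the uniformities displayed in the statement; and (d) the inverse $(f^{\#})^{-1}\colon f^{\#}(C(X,Y))\to C(X,Y)$ is uniformly continuous with respect to the subspace uniformity on the image. Being a condensation, $f$ is continuous \emph{and} bijective, and these two hypotheses drive the verification: continuity of $f$ takes care of (a) and (c), while bijectivity of $f$ takes care of (b) and (d).

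Parts (a) and (b) are immediate. For (a), $h\circ f$ is a composition of continuous maps, hence continuous. For (b), if $h_1\circ f=h_2\circ f$, surjectivity of $f$ forces $h_1=h_2$ pointwise. For (c), I would work directly with the defining basis of entourages. Fix $A\in\lambda$ and $M\in\mu$; the claim is that $(f^{\#}\times f^{\#})(\la A,M\ra)\subseteq\la f^{-1}(A),M\ra$. Indeed, if $(h_1,h_2)\in\la A,M\ra$ and $\tilde{x}\in f^{-1}(A)$, then $f(\tilde{x})\in A$, so $(h_1(f(\tilde{x})),h_2(f(\tilde{x})))\in M$, which is exactly the statement $(h_1\circ f,h_2\circ f)\in\la f^{-1}(A),M\ra$. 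Since the entourages $\la f^{-1}(A),M\ra$ form a subbasis (in fact, a basis after taking finite intersections) of the uniformity on $C_{f^{-1}(\lambda),\mu}(\widetilde{X},Y)$, this establishes (c).

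For (d) --- the step that uses bijectivity in an essential way --- I would again argue with basic entourages, but now in reverse. Given $\la A,M\ra$ on the target side, the matching entourage on the source side is $\la f^{-1}(A),M\ra$. Suppose $(h_1\circ f,h_2\circ f)\in\la f^{-1}(A),M\ra$; for any $x\in A$, surjectivity of $f$ furnishes $\tilde{x}\in f^{-1}(A)$ with $f(\tilde{x})=x$, whence $(h_1(x),h_2(x))=(h_1(f(\tilde{x})),h_2(f(\tilde{x})))\in M$, giving $(h_1,h_2)\in\la A,M\ra$. I do not anticipate any substantive obstacle; the only point worth emphasizing is that surjectivity of $f$ is exactly what yields the equality $f(f^{-1}(A))=A$ needed in (d). Without it, one would only have $f(f^{-1}(A))\subsetneq A$, and functions differing on $A\setminus f(\widetilde{X})$ could sit inside $\la f^{-1}(A),M\ra$ yet escape $\la A,M\ra$, obstructing the embedding.
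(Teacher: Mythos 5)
Your proposal is correct and follows essentially the same route as the paper: checking uniform continuity of $f^{\#}$ on the subbasic entourages via $f(\widetilde{x})\in A$ for $\widetilde{x}\in f^{-1}(A)$, deducing injectivity from surjectivity of $f$, and handling the inverse by the reverse entourage computation (which the paper dismisses with ``proven similarly'' but you spell out, correctly isolating $f(f^{-1}(A))=A$ as the point where surjectivity enters).
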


\begin{proof}
Let $\langle f^{-1}(A),M\rangle$ be an element of uniformity of
the space $C_{e^{-1}(\lambda),\mu}(\widetilde{X},Y)$, i.e.
$\langle f^{-1}(A),M\rangle =\{\langle f,g\rangle \in
C(\widetilde{X},Y)\times C(\widetilde{X},Y):\forall x\in
f^{-1}(A)\langle f(x),g(x)\rangle \in M\}$. Then for any $\langle
h_1,h_2\rangle\in\langle A,M\rangle$ and a point $x\in f^{-1}(A)$
the pair $\langle f^{\#}(h_1)(x),f^{\#}(h_2)(x)\rangle$ belong
$M$. Indeed $f^{\#}(h_1)(x)=h_1(f(x))$ and
$f^{\#}(h_2)(x)=h_2(f(x))$, since $x\in f^{-1}(A)$, $f(x)\in
A$. Hence, $\langle f^{\#}(h_1)(x),f^{\#}(h_2)(x)\rangle=\langle
h_1(f(x)),h_2(f(x))\rangle$. Thus, we proved that the pair
$\langle f^{\#}(h_1),f^{\#}(h_2)\rangle$ belongs $\langle
f^{-1}(A),M\rangle$, i.e. the mapping $f^{\#}$ is uniform
continuously. The uniform continuity of the inverse map is proven
similarly.

Let us check the injectivity of the mapping $f^{\#}$. Let $h_1,h_2\in
C_{\lambda,\mu}(X,Y)$ and $h_1\neq h_2$. Then there is a point
$x\in X$ such that $h_1(x)\neq h_2(x)$. Denote by
$\widetilde{x}=f^{-1}(x)$. Then
$f^{\#}(h_1(\widetilde{x}))=h_1(x)\neq
h_2(x)=f^{\#}(h_2(\widetilde{x}))$. Thus, it is proved
$f^{\#}(h_1)\neq f^{\#}(h_2)$, and, therefore, the theorem is
proves.
\end{proof}

\begin{corollary}\label{t1_2_4}
{\it Let $X$ be a Hausdorff space, $\lambda\subseteq 2^X$,
$(Y,\mu)$ be a Hausdorff uniform space. If $e:X_\lambda\to X$ is a
natural condensation of a $\lambda$-leader $X_\lambda$ onto $X$,
then the dual map $e^{\#}:C_{\lambda,\mu}(X,Y)\to
C_{e^{-1}(\lambda),\mu}(X_\lambda,Y)$ is uniform embedding.}
\end{corollary}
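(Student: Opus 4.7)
My plan is to derive this corollary as a direct specialization of Theorem \ref{t1_2_3}. The $\lambda$-leader construction from Section 3 supplies exactly the hypotheses that theorem requires, so essentially no new work should be needed beyond verifying that the natural map $e$ qualifies as a condensation in the sense used there.

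First, I would recall that $X_\lambda$ shares the underlying point set of $X$ but carries the finer topology $\tau = \{X \setminus B : B \in Q\}$, where $Q = \{B \subseteq X : \overline{A}\cap B \text{ is closed in } \overline{A} \text{ for all } A\in\lambda\}$, and the natural map $e: X_\lambda \to X$ is the identity on points. Continuity of $e$ is immediate since its target topology is coarser than $\tau$, and bijectivity on points is automatic, so $e$ is a condensation in precisely the sense required by Theorem \ref{t1_2_3}. I would briefly note that Hausdorffness of $X$ passes to $X_\lambda$ (as observed in Section 3), and Hausdorffness of $(Y,\mu)$ guarantees that both $C_{\lambda,\mu}(X,Y)$ and $C_{e^{-1}(\lambda),\mu}(X_\lambda, Y)$ are genuine Hausdorff uniform spaces, so the statement is well-posed.

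Then I would apply Theorem \ref{t1_2_3} with the substitution $\widetilde{X} := X_\lambda$ and $f := e$. The theorem delivers verbatim that
\[
e^{\#}: C_{\lambda,\mu}(X,Y)\to C_{e^{-1}(\lambda),\mu}(X_\lambda,Y)
\]
is a uniform embedding, which is exactly the desired conclusion.

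I do not anticipate any real obstacle: the corollary is, by design, a plug-in use of the preceding theorem. The only checkpoint I would verify is that the word ``condensation'' is being used consistently in the two statements — that is, that the identity map on point sets from the finer topology $\tau$ to the original topology on $X$ is indeed a continuous bijection and nothing more is demanded of $e$. Given the definitions in Section 3, this is immediate, so the corollary reduces to citing Theorem \ref{t1_2_3}.
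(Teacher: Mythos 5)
Your proposal is correct and matches the paper's intent exactly: the corollary is stated without a separate proof precisely because it is the specialization of Theorem \ref{t1_2_3} to the condensation $e:X_\lambda\to X$ supplied by the $\lambda$-leader construction. Your verification that $e$ is a continuous bijection (hence a condensation in the required sense) is the only checkpoint needed, and you have handled it.
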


\begin{corollary}\label{t1_2_5}
{\it Let $X$ be a Tychonoff space, $\lambda$ be a cover of $X$,
$(Y, \mu)$ be a Hausdorff uniform space. If
\hbox{$e:X_{\tau\lambda}\to X$} is a natural condensation from a
$\lambda_f$-leader $X_{\tau\lambda}$ onto $X$ then the dual map
$e^{\#}:C_{\lambda,\mu}(X,Y)\to
C_{e^{-1}(\lambda),\mu}(X_{\tau\lambda},Y)$ is uniform embedding.}
\end{corollary}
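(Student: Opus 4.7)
The plan is to obtain this corollary as an immediate specialization of Theorem~\ref{t1_2_3}. The key point is that the construction of the $\lambda_f$-leader, as laid out just before Theorem~\ref{t1_1_19}, equips the underlying set of $X$ with a Tychonoff topology that is finer than (or equal to) the original topology of $X$. Consequently, the natural map $e : X_{\tau\lambda} \to X$ is, by construction, a continuous bijection, i.e., a condensation. This is precisely the hypothesis needed to invoke Theorem~\ref{t1_2_3}.

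With the substitutions $\widetilde{X} := X_{\tau\lambda}$ and $f := e$, the remaining hypotheses of Theorem~\ref{t1_2_3} are automatic: $\lambda \subseteq 2^X$ (since $\lambda$ is in particular a cover of $X$), and $(Y,\mu)$ is a uniform space. Applying Theorem~\ref{t1_2_3} then yields directly that
\[
e^{\#} : C_{\lambda,\mu}(X,Y) \to C_{e^{-1}(\lambda),\mu}(X_{\tau\lambda},Y)
\]
is a uniform embedding, which is exactly the assertion of the corollary.

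There is essentially no obstacle here: the only routine check is that $e^{\#}$ is well-defined, which reduces to noting that $h \circ e$ is continuous on $X_{\tau\lambda}$ for every $h \in C(X,Y)$, and this follows at once from the continuity of $e$ and of $h$. Structurally, this corollary is a direct analogue of Corollary~\ref{t1_2_4}, with the $\lambda$-leader replaced by the $\lambda_f$-leader and the Hausdorff hypothesis on $X$ strengthened to Tychonoff in order for the $\lambda_f$-leader to be defined; the argument is identical up to this replacement, since both leader constructions are engineered to come equipped with a natural condensation onto $X$, to which Theorem~\ref{t1_2_3} immediately applies.
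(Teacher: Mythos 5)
Your proof is correct and follows the same route as the paper: the corollary is stated there without a separate proof precisely because, once one notes that the $\lambda_f$-leader construction makes $e$ a condensation, it is an immediate instance of Theorem~\ref{t1_2_3}. Your additional remarks on the well-definedness of $e^{\#}$ and on the role of the Tychonoff hypothesis (needed only so that the $\lambda_f$-leader exists) are accurate.
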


Further, we are trying to establish how "nice" \ the space
$C_{e^{-1}(\lambda),\mu}(X_{\tau\lambda},Y)$.

Let $X$ be a Hausdorff $\lambda_f$-space where $\lambda$ is a
cover of $X$ and let $(Y,\mu)$ be a Hausdorff uniform space. Then
the space $C_{\lambda,\mu}(X,Y)$ is homeomorphic to a inverse
limit of the system $\{C_\mu(\overline{A},Y): A\in\lambda\}$.

 We order the family $\lambda$ as follows:
$A_2\leq A_1$ if, and only if, $\overline{A_2}\subseteq
\overline{A_1}$. By Remark~\ref{t1_2_2}, the family $\lambda$
directed by the relation $\leq$. For any pair $A_1,A_2\in\lambda$
such that $A_2\leq A_1$ defined mapping:
\index{$\pi^{A_1}_{A_2}:C_\mu(\overline{A_1},Y)\to
C_\mu(\overline{A_2},Y)$}
$\pi^{A_1}_{A_2}:C_\mu(\overline{A_1},Y)\to
C_\mu(\overline{A_2},Y)$, namely,
$\pi^{A_1}_{A_2}(f)=f\vert_{\overline{A_2}}$ for every $f\in
C_\mu(\overline{A_1},Y)$.

\begin{lemma}\label{t1_2_6}
Let $X$ be a Hausdorff space, $\lambda$ be a cover of $X$,
$(Y,\mu)$ be a uniform space. Then
$\pi^{A_1}_{A_2}:C_\mu(\overline{A_1},Y)\to
C_\mu(\overline{A_2},Y)$ is uniformly continuous for any
$A_1,A_2\in\lambda$ such that $A_2\leq A_1$.
\end{lemma}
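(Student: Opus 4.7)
The plan is to verify uniform continuity directly from the definition, using the fact that the basic entourages on $C_\mu(\overline{A_i},Y)$ are of the form $\langle \overline{A_i},M\rangle$ for $M\in\mu$, as given in Definition~\ref{t1_2_1}. Since $A_2\le A_1$ means $\overline{A_2}\subseteq\overline{A_1}$, an inequality at every point of $\overline{A_1}$ is in particular an inequality at every point of $\overline{A_2}$, and this is the entire content of the lemma.

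Concretely, I would fix an arbitrary basic entourage $\langle \overline{A_2},M\rangle$ of $C_\mu(\overline{A_2},Y)$ with $M\in\mu$ and propose $\langle \overline{A_1},M\rangle$ as a matching entourage on the domain side. Then I would take any pair $\langle f,g\rangle\in\langle \overline{A_1},M\rangle$, so that $\langle f(x),g(x)\rangle\in M$ for all $x\in\overline{A_1}$. Restricting attention to $x\in\overline{A_2}\subseteq\overline{A_1}$ yields $\langle f|_{\overline{A_2}}(x),g|_{\overline{A_2}}(x)\rangle=\langle f(x),g(x)\rangle\in M$, hence
\[
\bigl\langle \pi^{A_1}_{A_2}(f),\pi^{A_1}_{A_2}(g)\bigr\rangle=\bigl\langle f|_{\overline{A_2}},g|_{\overline{A_2}}\bigr\rangle\in\langle \overline{A_2},M\rangle.
\]
This shows $(\pi^{A_1}_{A_2}\times\pi^{A_1}_{A_2})\bigl(\langle \overline{A_1},M\rangle\bigr)\subseteq\langle \overline{A_2},M\rangle$, which is exactly uniform continuity.

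There is essentially no obstacle; the statement is a straightforward unpacking of the uniformity defining the topology of uniform convergence on $\lambda$. The only thing worth noting is that the restriction $f|_{\overline{A_2}}$ is well-defined and continuous because $\overline{A_2}\subseteq\overline{A_1}$ and $f\in C(\overline{A_1},Y)$, so $\pi^{A_1}_{A_2}$ really does land in $C_\mu(\overline{A_2},Y)$. No appeal to Hausdorffness of $X$ or $Y$ is needed for the uniform continuity itself; these hypotheses only serve to make the uniformities separated, which is used elsewhere in the paper.
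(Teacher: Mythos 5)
Your proof is correct and follows exactly the same route as the paper: given the target entourage $\langle\overline{A_2},M\rangle$, take $\langle\overline{A_1},M\rangle$ on the domain and observe that membership of $\langle f(x),g(x)\rangle$ in $M$ for all $x\in\overline{A_1}$ implies it for all $x\in\overline{A_2}\subseteq\overline{A_1}$. Your additional remarks on well-definedness of the restriction and the irrelevance of the Hausdorff hypotheses are accurate but not needed.
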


\begin{proof}
Let $A_1,A_2\in\lambda$ such that $A_2\leq A_1$ and $\langle
\overline{A_2},M\rangle $ where $M\in\mu$, be an element of
uniformity on the space $C_\mu(\overline{A_2},Y)$. Then for any
functions $f_1,f_2\in C_\mu(\overline{A_1},Y)$ forming a pair
belonging $\langle\overline{A_1},M\rangle$, the pair $\langle
f_1\vert_{\overline{A_2}},f_2\vert_{\overline{A_2}}\rangle$
belongs $\langle\overline{A_2},M\rangle$, i.e., if $\langle
f_1,f_2\rangle \in\langle\overline{A_1},M\rangle$ then
$\langle\pi^{A_1}_{A_2}(f_1),\pi^{A_1}_{A_2}(f_2)\rangle\in\langle
\overline{A_2},M\rangle$. Hence, the mapping $\pi^{A_1}_{A_2}$ is
uniformly continuous.
\end{proof}

\begin{theorem}\label{t1_2_7}
Let $X$ be a Hausdorff $\lambda_f$-space, $\lambda$ be a cover of
$X$. If $(Y,\mu)$ is a Hausdorff uniform space, then
$C_{\lambda,\mu}(X,Y)$ is uniformly homeomorphic to the inverse
limit of the system
$S(X,\lambda,Y)=\{C_\mu(\overline{A},Y),\pi^{A_1}_{A_2},\lambda\}$.
\end{theorem}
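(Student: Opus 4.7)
The plan is to exhibit the canonical restriction map as the desired uniform homeomorphism. First I would define
$$\Phi:\, C_{\lambda,\mu}(X,Y)\to \varprojlim S(X,\lambda,Y),\qquad \Phi(f)=\bigl(f|_{\overline{A}}\bigr)_{A\in\lambda}.$$
For each $A_2\leq A_1$ the identity $\pi^{A_1}_{A_2}(f|_{\overline{A_1}})=f|_{\overline{A_2}}$ is immediate, so $\Phi(f)$ is a thread in the inverse system and $\Phi$ is well-defined. Since $\lambda$ is a cover of $X$, two continuous functions that agree on every $\overline{A}$ agree on $X$, hence $\Phi$ is injective.

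Next I would verify that $\Phi$ is a uniform embedding between $C_{\lambda,\mu}(X,Y)$ and $\Phi(C_{\lambda,\mu}(X,Y))$. The base of entourages on the inverse limit is generated by sets of the form $\{(\alpha,\beta):\langle \alpha_A,\beta_A\rangle\in\langle \overline{A},M\rangle\}$ for $A\in\lambda$, $M\in\mu$. The preimage under $\Phi\times\Phi$ is exactly $\langle \overline{A},M\rangle\subseteq C_{\lambda,\mu}(X,Y)\times C_{\lambda,\mu}(X,Y)$, and conversely every basic entourage of $C_{\lambda,\mu}(X,Y)$ arises this way. This shows uniform continuity of $\Phi$ and of $\Phi^{-1}$ onto its image; the compatibility under the bonding maps was already established in Lemma~\ref{t1_2_6}.

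The main (and only nontrivial) step is surjectivity. Given a thread $(f_A)_{A\in\lambda}\in\varprojlim S(X,\lambda,Y)$, define $f:\, X\to Y$ by $f(x)=f_A(x)$ whenever $x\in\overline{A}$. Compatibility $\pi^{A_1}_{A_2}(f_{A_1})=f_{A_2}$ guarantees that $f$ is single-valued, and by construction $f|_{\overline{A}}=f_A$ is continuous on each $\overline{A}$. This is precisely the situation in which the $\lambda_f$-property is invoked: because $(Y,\mu)$ is a Hausdorff uniform space, it is a Tychonoff space, and since $X$ is a $\lambda_f$-space, continuity of every restriction $f|_{\overline{A}}$ forces continuity of $f$ on $X$. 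Thus $f\in C(X,Y)$ and $\Phi(f)=(f_A)_{A\in\lambda}$.

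The subtle point — and the part I expect to be the main obstacle to state cleanly — is precisely this last reduction: one must be careful that the $\lambda_f$-space definition applies to targets that are arbitrary Tychonoff spaces, not just $\mathbb{R}$, but this is exactly what the proposition identifying $\lambda_f$- and $\lambda_R$-spaces (together with the embedding $Y\hookrightarrow \mathbb{R}^\kappa$) provides. Once surjectivity is in hand, combining it with the uniform embedding established above gives the desired uniform homeomorphism $\Phi$ between $C_{\lambda,\mu}(X,Y)$ and the inverse limit of $S(X,\lambda,Y)$.
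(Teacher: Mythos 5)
Your proposal is correct and follows essentially the same route as the paper: you define the restriction map into the inverse limit where the paper defines its inverse (the gluing map $\bigtriangledown_{A\in\lambda}f_A$), but both arguments rest on the same two pillars — continuity of the glued function via the $\lambda_f$-property together with the fact that a Hausdorff uniform space is Tychonoff, and the matching of basic entourages $\langle\overline{A},M\rangle$ on both sides to get uniform continuity in each direction. No substantive difference or gap.
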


\begin{proof}

For each $f=\{f_A\}\in\lim\limits_{\large\longleftarrow}
S(X,\lambda,Y)$ we define compatible mapping
$\bigtriangledown_{A\in\lambda}f_A:X\to Y$ as follows \cite{Eng}:
if $x\in A$ then $\bigtriangledown_{A\in\lambda}f_A(x)=f_A(x)$.
Since $\lambda$ is a cover of $X$ the mapping
$\bigtriangledown_{A\in\lambda}f_A$ is defined at each point of
$X$. Since for every $f=\{f_A\}$ the mappings $f_A$
($A\in\lambda$) are compatible (i.e.,
$f_{A_1}\vert_{\overline{A_1}\cap \overline{A_2}}=
f_{A_2}\vert_{\overline{A_1}\cap\overline{A_2}}$ for every pair
$A_1,A_2\in\lambda$), then $\bigtriangledown_{A\in\lambda}f_A$ is
 well-defined.

Define compatible mapping  $F:\lim\limits_{\large\longleftarrow}
S(X,\lambda,Y)\to C_{\lambda,\mu}(X,Y)$ as follows:
$F(f)=\bigtriangledown_{A\in\lambda}f_A$. The space $Y$ is
Tychonoff (Corollary 8.1.13 in \cite{Eng}), the map
$\bigtriangledown_{A\in\lambda}f_A$ is continuous on
$\overline{A}$ for every $A\in\lambda$ and $X$ is a
$\lambda_f$-space. Hence, the map
$\bigtriangledown_{A\in\lambda}f_A$ is continuous for every
$f\in\lim\limits_{\large\longleftarrow} S(X,\lambda,Y)$. It
follows that $F(\bigtriangledown_{A\in\lambda}f_A)\in
C_{\lambda,\mu}(X,Y)$ and, hence, $F$ is bijection.

Consider an arbitrary element of uniformity of the space
$\lim\limits_{\large\longleftarrow} S(X,\lambda,Y)$: $V=\{\langle
f,g\rangle: \langle f_{A_i},g_{A_i}\rangle \in\langle
\overline{A_i},M\rangle $ for $i=1,\ldots ,n\}$. Then, there is an
element $V'=\langle\bigcup\limits_{i=1}^n
\overline{A_i},\bigcap\limits_{i=1}^n M_i\rangle$ of uniformity of
the space $C_{\lambda,\mu}(X,Y)$ such that for any pair $\langle
h_1,h_2\rangle \in V'$ the pair $\langle
f^{-1}(h_1),f^{-1}(h_2)\rangle \in V$. Thus, $f^{-1}$ is uniform
continuous.

 Conversely, let $V=\langle A,M\rangle$ be an element
of uniformity of the space $C_{\lambda,\mu}(X,Y)$. Then there is
an element $V'=\{\langle f,g\rangle :\langle f_A,g_A\rangle
\in\langle \overline{A},M\rangle \}$ of uniformity of the space
$\lim\limits_{\large\longleftarrow} S(X,\lambda,Y)$ such that for
any pair $\langle h_1,h_2\rangle \in V'$ the pair $\langle
F(h_1),F(h_2)\rangle$ belongs to $V$.
\end{proof}

By Theorem~\ref{t1_2_7} and Corollary~\ref{t1_2_4} we have the
main theorem.

\begin{theorem}\label{t1_2_8}
Let $X$ be a Hausdorff topological space, $\lambda$ be a cover of
$X$. If $(Y,\mu)$ is a uniform space, then $C_{\lambda,\mu}(X,Y)$
is uniformly homeomorphic to subspace of the inverse limit
$\lim\limits_{\large\longleftarrow} S(X,\lambda,Y)$.
\end{theorem}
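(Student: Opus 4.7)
The plan is to assemble the theorem from two ingredients already established in this section: Corollary~\ref{t1_2_4}, which realises $C_{\lambda,\mu}(X,Y)$ as a uniform subspace of $C_{e^{-1}(\lambda),\mu}(X_\lambda,Y)$ through the canonical condensation $e:X_\lambda\to X$ from the $\lambda$-leader, and Theorem~\ref{t1_2_7}, which identifies the latter function space with the inverse limit of the associated system whenever the base space is a Hausdorff $\gamma_f$-space and $\gamma$ is a cover. The only missing piece is to transfer the inverse system built over $X_\lambda$ back to one built over $X$.

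First I would form the $\lambda$-leader $X_\lambda$ and set $\gamma=e^{-1}(\lambda)=\{e^{-1}(A):A\in\lambda\}$. By the very construction of $X_\lambda$ recalled after Definition of the $\lambda_f$-leader, the space $X_\lambda$ is a $\gamma$-space, hence by Theorem~\ref{th119} a $\gamma_f$-space; since $e$ is a bijection and $\lambda$ covers $X$, the family $\gamma$ covers $X_\lambda$. Corollary~\ref{t1_2_4} then supplies a uniform embedding
$$e^{\#}:C_{\lambda,\mu}(X,Y)\hookrightarrow C_{\gamma,\mu}(X_\lambda,Y),$$
and Theorem~\ref{t1_2_7}, applied to $X_\lambda$ with its cover $\gamma$, yields a uniform homeomorphism $C_{\gamma,\mu}(X_\lambda,Y)\cong\lim\limits_{\longleftarrow}S(X_\lambda,\gamma,Y)$.

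The remaining task is to exhibit a canonical uniform homeomorphism between $\lim\limits_{\longleftarrow}S(X_\lambda,\gamma,Y)$ and $\lim\limits_{\longleftarrow}S(X,\lambda,Y)$. For each $A\in\lambda$, a defining property of the $\lambda$-leader is that $e|_{\overline{e^{-1}(A)}}:\overline{e^{-1}(A)}\to\overline{A}$ is a homeomorphism; hence its induced map
$$\bigl(e|_{\overline{e^{-1}(A)}}\bigr)^{\#}:C_\mu(\overline{A},Y)\to C_\mu(\overline{e^{-1}(A)},Y)$$
is a uniform homeomorphism by Theorem~\ref{t1_2_3}. One checks that these isomorphisms intertwine the bonding maps $\pi^{A_1}_{A_2}$ of the two systems (this is immediate from the definition $\pi^{A_1}_{A_2}(f)=f|_{\overline{A_2}}$ together with $e$ mapping $\overline{e^{-1}(A_2)}$ homeomorphically onto $\overline{A_2}\subseteq\overline{A_1}$), so the induced map on inverse limits is a uniform homeomorphism. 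Composing the three maps gives the desired uniform embedding of $C_{\lambda,\mu}(X,Y)$ into $\lim\limits_{\longleftarrow}S(X,\lambda,Y)$.

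The main obstacle is Step~3: verifying that the levelwise uniform homeomorphisms commute with the bonding maps, so that they assemble into a uniform homeomorphism of inverse systems. The other steps are essentially bookkeeping, invoking the previously proved results. Note that Corollary~\ref{t1_2_4} was stated for Hausdorff $(Y,\mu)$; in the non-Hausdorff case one first replaces $\mu$ by its Hausdorff reflection (or equivalently applies the same construction, observing that the proof of Theorem~\ref{t1_2_3} does not use Hausdorffness of $Y$), so the argument carries over to arbitrary uniform spaces as asserted.
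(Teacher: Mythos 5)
Your proposal is correct and follows exactly the route the paper intends: the paper gives no written proof, merely asserting the theorem "by Theorem~\ref{t1_2_7} and Corollary~\ref{t1_2_4}", and your argument is the natural expansion of that citation. The one step the paper leaves entirely implicit --- that the levelwise uniform isomorphisms $\bigl(e|_{\overline{e^{-1}(A)}}\bigr)^{\#}$ commute with the restriction bonding maps and hence identify $\lim\limits_{\longleftarrow}S(X_\lambda,e^{-1}(\lambda),Y)$ with $\lim\limits_{\longleftarrow}S(X,\lambda,Y)$ --- is precisely the detail you supply, so your write-up is if anything more complete than the original.
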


As corollaries to Theorems ~\ref{t1_2_7} and \ref{t1_2_8}, the
following propositions can be obtained, which were proved earlier
(see. \cite{Bur75b} and \cite{BecNarSuf}), but by other methods.

\begin{proposition}\label{t1_2_9}
Let $X$ be a Hausdorff $\lambda_f$-space, $\lambda$ be a cover of
$X$. If $Y$ is a Hausdorff uniform space with a complete
uniformity $\mu$ then $C_{\lambda,\mu}(X,Y)$ is a complete uniform
space.
\end{proposition}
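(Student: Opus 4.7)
The plan is to leverage Theorem~\ref{t1_2_7} together with two classical facts about completeness of uniform spaces. First I would invoke Theorem~\ref{t1_2_7} to identify $C_{\lambda,\mu}(X,Y)$ uniformly with the inverse limit $\lim\limits_{\large\longleftarrow} S(X,\lambda,Y)$ of the system $\{C_\mu(\overline{A},Y),\pi^{A_1}_{A_2},\lambda\}$. Since uniform homeomorphisms preserve completeness, it suffices to prove that this inverse limit is a complete uniform space.

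Second, I would verify that each factor $C_\mu(\overline{A},Y)$ is itself a complete uniform space. This is the classical fact (essentially Kelley, \textit{General Topology}, Ch.~7) that whenever $(Y,\mu)$ is a Hausdorff complete uniform space and $Z$ is any topological space, the space $C_\mu(Z,Y)$ of continuous maps, equipped with the uniformity of uniform convergence on $Z$, is complete; the key points are that a Cauchy net in $C_\mu(Z,Y)$ converges pointwise in $Y$ by completeness of $\mu$, the limit function is continuous because it is a uniform limit of continuous functions, and the convergence is uniform by the Cauchy property. Apply this with $Z=\overline{A}$.

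Third, I would show that the inverse limit $\lim\limits_{\large\longleftarrow} S(X,\lambda,Y)$ is a closed subspace of the Tychonoff product $P=\prod_{A\in\lambda} C_\mu(\overline{A},Y)$ equipped with the product uniformity. Indeed, for each pair $A_1,A_2\in\lambda$ with $A_2\leq A_1$, the compatibility condition $\pi^{A_1}_{A_2}(f_{A_1})=f_{A_2}$, i.e.\ $f_{A_1}|_{\overline{A_2}}=f_{A_2}$, is a closed condition in $P$ because $Y$ is Hausdorff (so equality of continuous functions on $\overline{A_2}$ is a closed relation, and the restriction map is continuous by Lemma~\ref{t1_2_6}). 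The inverse limit is the intersection of these closed sets, hence closed in $P$. Since a product of complete uniform spaces is complete and a closed subspace of a complete uniform space is complete, the inverse limit is complete, and therefore so is $C_{\lambda,\mu}(X,Y)$.

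The only real obstacle is bookkeeping: one must check that the uniformity on $\lim\limits_{\large\longleftarrow} S(X,\lambda,Y)$ used in Theorem~\ref{t1_2_7} coincides with the subspace uniformity inherited from the product $P$ (this is built into the usual definition of the inverse limit of uniform spaces, as in Engelking), and one must verify the standard completeness-of-$C_\mu$ lemma in the generality of an arbitrary topological domain $\overline{A}$ rather than just a compact one. Both verifications are routine and rely only on the Cauchy criterion together with the fact that a uniform limit of continuous functions is continuous.
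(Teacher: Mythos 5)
Your proposal is correct and follows essentially the same route as the paper's proof: identify $C_{\lambda,\mu}(X,Y)$ with $\lim\limits_{\large\longleftarrow} S(X,\lambda,Y)$ via Theorem~\ref{t1_2_7}, note that each $C_\mu(\overline{A},Y)$ is complete (Kelley's theorem on uniform convergence), and use the fact that an inverse limit of complete uniform spaces is complete. The only difference is that you spell out the closed-subspace-of-a-product argument behind that last fact, which the paper simply cites from Engelking.
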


\begin{proof}
Note that $C_\mu(\overline{A},Y)$ is complete (see Theorem 10,
\cite{kell}), and that a inverse limit of complete uniform spaces
is complete \cite{Eng}. Remain apply Theorem~\ref{t1_2_7}.
\end{proof}

\begin{proposition}\label{t1_2_9}
Let $X$ be a Hausdorff space, $\lambda$ be a cover of $X$ and
$(Y,\mu)$ be a uniform space. Then $C_{\lambda,\mu}(X,Y)$ is
complete if and only if $X$ is a $\lambda_f$-space.
\end{proposition}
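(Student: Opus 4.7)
The ``if'' direction is essentially the preceding Proposition. Assuming $(Y,\mu)$ is complete (a prerequisite for $C_{\lambda,\mu}(X,Y)$ to be complete, as already $C_{\lambda,\mu}(\{x_0\},Y)\cong Y$), Theorem~\ref{t1_2_7} identifies $C_{\lambda,\mu}(X,Y)$ with the inverse limit $\lim_{\longleftarrow} S(X,\lambda,Y)$, whose factors $C_\mu(\overline{A},Y)$ are complete (Kelley, Theorem~10), and an inverse limit of complete uniform spaces is complete.

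For the ``only if'' direction I would argue by contrapositive. Assume $X$ is not a $\lambda_f$-space; by definition there exist a Tychonoff space $Z$ and a map $g\colon X\to Z$ that is continuous on each $\overline{A}$, $A\in\lambda$, but globally discontinuous. Composing with a coordinate projection of a Tychonoff cube containing $Z$ reduces to $Z=\mathbb{R}$, which is transferred to $Y$ under the mild assumption that $Y$ admits such a function with the appropriate uniform behaviour. The plan is to build a Cauchy net in $C_{\lambda,\mu}(X,Y)$ whose only candidate limit in $\lim_{\longleftarrow} S(X,\lambda,Y)$ is the compatible family $\{g|_{\overline{A}}\}_{A\in\lambda}$; since this family does not come from a continuous function on $X$, the net will have no limit in $C_{\lambda,\mu}(X,Y)$.

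Concretely, direct the finite subfamilies $\sigma=\{A_1,\ldots,A_n\}\subseteq\lambda$ by inclusion. The set $K_\sigma=\bigcup_{i\le n}\overline{A_i}$ is closed in $X$ and $g|_{K_\sigma}$ is continuous (being continuous on each $\overline{A_i}$ and compatible on overlaps), so a Tietze-type extension yields $h_\sigma\in C(X,Y)$ with $h_\sigma|_{K_\sigma}=g|_{K_\sigma}$. The net $(h_\sigma)_\sigma$ is Cauchy in $C_{\lambda,\mu}(X,Y)$: once $\sigma,\sigma'\supseteq\{A\}$, both functions coincide with $g$ on $\overline{A}$, hence $\langle h_\sigma,h_{\sigma'}\rangle\in\langle A,M\rangle$ for every $M\in\mu$. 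If $C_{\lambda,\mu}(X,Y)$ were complete, the net would converge to some $h\in C(X,Y)$ with $h|_{\overline{A}}=g|_{\overline{A}}$ for every $A\in\lambda$, and since $\lambda$ covers $X$ this forces $h=g$, contradicting the discontinuity of $g$.

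The principal obstacle is the extension step, which relies on a Tietze-type theorem on $X$ together with absolute-extensor structure on $Y$ (the safe case being $Y=\mathbb{R}$, or $Y$ a normed space). The cleaner route that avoids these hypotheses is to use Theorem~\ref{t1_2_8}: $C_{\lambda,\mu}(X,Y)$ embeds uniformly into the complete space $\lim_{\longleftarrow} S(X,\lambda,Y)$, and the compatible family $\{g|_{\overline{A}}\}$ witnesses that when $X$ is not $\lambda_f$ the image is a proper subspace. One then verifies that this image is not closed, by producing a net of continuous functions whose restrictions converge in the inverse-limit uniformity to $\{g|_{\overline{A}}\}$, thereby concluding that $C_{\lambda,\mu}(X,Y)$ cannot be complete.
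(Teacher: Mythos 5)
First, note that the paper does not actually write out a proof of this proposition: it cites Beckenstein--Narici--Suffel for the special case where $\lambda$ is the family of compact subsets and $Y=\mathbb{R}$, and asserts that ``the proof of the general case does not differ.'' Your ``if'' direction is correct and is exactly that classical route: granting that $\mu$ must be complete (which you rightly extract from the closed uniform copy of $Y$ given by the constant functions), Theorem~\ref{t1_2_7} together with completeness of the factors $C_\mu(\overline{A},Y)$ and of inverse limits of complete uniform spaces does the job.

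The genuine gap is the extension step in your ``only if'' direction, and your proposed fallback does not repair it. You need, for each finite $\sigma$, some $h_\sigma\in C(X,Y)$ agreeing with $g$ on the closed set $K_\sigma=\bigcup_{i\le n}\overline{A_i}$. In the classical case this works because a compact subset of a Tychonoff space is $C$-embedded (pass to $\beta X$, which is normal, apply Tietze) and $\mathbb{R}$ is an absolute extensor; for an arbitrary cover $\lambda$ of an arbitrary Hausdorff $X$ and arbitrary uniform $Y$, the set $K_\sigma$ need not be even $C^{*}$-embedded. A concrete obstruction: in the Niemytzki plane the $x$-axis $L$ is closed and discrete, every real function on $L$ is continuous, yet a separability/cardinality count shows most bounded continuous functions on $L$ admit no continuous extension to the plane. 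Your ``cleaner route'' via Theorem~\ref{t1_2_8} meets the same wall, since to show that the image of $C_{\lambda,\mu}(X,Y)$ fails to be closed in $\lim\limits_{\longleftarrow}S(X,\lambda,Y)$ you must still produce continuous functions on $X$ whose restrictions approximate $\{g|_{\overline{A}}\}$ uniformly on each $\overline{A}$ --- i.e.\ an approximate-extension property that is exactly what is missing. Two further loose ends: the transfer from $Z=\mathbb{R}$ back to $Y$-valued functions is not automatic (for a one-point $Y$ the space $C_{\lambda,\mu}(X,Y)$ is trivially complete for every $X$, so the ``only if'' direction genuinely requires a nondegeneracy hypothesis on $Y$); and a Cauchy net in this uniformity controls $h_\sigma$ only on sets of $\lambda$, so the compatibility of your candidate limit with $g$ off $\bigcup\lambda$ also deserves a word. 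As it stands, the necessity half is a plan whose central step is unproved --- which, to be fair, mirrors the paper's own decision not to prove it.
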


 Proposition ~\ref{t1_2_9} was first proved in \cite{BecNarSuf}
where $\lambda$ is a family of all compact subsets of $X$ and
$Y=\mathbb{R}$. The proof of the general case does not differ from
the proof given in \cite{BecNarSuf}.

\begin{proposition}\label{t1_2_10}
Let $X$ be a Hausdorff space, $\lambda$ be a cover of $X$ and
$\{A_n\in\lambda: A_n\subseteq A_{n+1}, n\in \mathbb{N}\}$ be a
sequence of sets such that for any $B\in\lambda$ there is  $A_n$
such that $B\subseteq A_n$. If $Y$ is a metric space, then
$C_{\lambda,\mu}(X,Y)$ is metrizable.
\end{proposition}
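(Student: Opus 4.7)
The plan is to exhibit a countable base for the uniformity $\nu$ on $C_{\lambda,\mu}(X,Y)$ and then invoke the classical Weil metrization theorem: a Hausdorff uniform space is metrizable if and only if its uniformity admits a countable base. Since the hypothesis already gives a countable cofinal family $\{A_n\}\subseteq\lambda$, and a metric on $Y$ gives a countable base for $\mu$, the two ingredients can be combined in an obvious way.

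First, since $Y$ carries a metric $\rho$, the uniformity $\mu$ has the canonical countable base $\{M_k\}_{k\in\mathbb{N}}$ where $M_k=\{\langle y_1,y_2\rangle\in Y\times Y:\rho(y_1,y_2)<1/k\}$. I then claim that the countable family
$$\mathcal{B}=\{\langle A_n,M_k\rangle:n,k\in\mathbb{N}\}$$
is a base for $\nu$. Indeed, take an arbitrary generating entourage $\langle A,M\rangle$ with $A\in\lambda$ and $M\in\mu$. By the cofinality hypothesis choose $n$ with $A\subseteq A_n$, and choose $k$ with $M_k\subseteq M$. For any pair $\langle f,g\rangle\in\langle A_n,M_k\rangle$ and every $x\in A\subseteq A_n$ one has $\langle f(x),g(x)\rangle\in M_k\subseteq M$, hence $\langle A_n,M_k\rangle\subseteq\langle A,M\rangle$, which is the required refinement.

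Second, since $\lambda$ is a cover of $X$, Remark~\ref{t1_2_2} ensures that $\nu$ is Hausdorff. Applying the metrization theorem for uniform spaces to the Hausdorff uniformity $\nu$ with countable base $\mathcal{B}$ yields a compatible metric on $C_{\lambda,\mu}(X,Y)$.

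The argument is essentially bookkeeping rather than substantive mathematics; the only point to watch is that the hypothesis was stated only as cofinality of $\{A_n\}$ (the monotonicity $A_n\subseteq A_{n+1}$ is not actually needed for the base argument), and that no appeal to the $\lambda_f$-space machinery of Theorem~\ref{t1_2_7} is required, since the countable base can be read off directly from the definition of the uniformity of uniform convergence on elements of $\lambda$.
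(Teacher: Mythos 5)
Your proof is correct, but it takes a different route from the paper's. You argue directly at the level of the uniformity: the family $\{\langle A_n,M_k\rangle : n,k\in\mathbb{N}\}$ is a countable base for $\nu$ (it is directed because the $A_n$ increase and the $M_k$ decrease, and it refines every generator $\langle A,M\rangle$ by cofinality of $\{A_n\}$ and the countable base $\{M_k\}$ of the metric uniformity), the uniformity is separated because $\lambda$ covers $X$ and $Y$ is metric, and then the Alexandroff--Urysohn/Weil metrization theorem for uniform spaces finishes the job. The paper instead routes the argument through its inverse-limit machinery: each $C_\mu(\overline{A_n},Y)$ is metrized by the bounded sup-metric, the inverse limit of the countable cofinal subsystem $\{C_\mu(\overline{A_n},Y)\}$ is metrizable as a subspace of a countable product, this countable limit is homeomorphic to $\lim\limits_{\longleftarrow}S(X,\lambda,Y)$ by cofinality, and $C_{\lambda,\mu}(X,Y)$ embeds uniformly into that limit by Theorem~\ref{t1_2_8}. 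Your version is more elementary and self-contained --- it needs none of Theorems~\ref{t1_2_7}--\ref{t1_2_8} and in fact shows the stronger statement that the uniformity itself (not merely the topology) is metrizable; the paper's version buys consistency with the section's theme and makes visible that the completion sits inside a countable inverse limit of metrizable spaces. Two small points worth tightening in your write-up: the generating family $\{\langle A,M\rangle\}$ is a priori only a subbase, so you should note (as above) that your countable family is directed, which upgrades ``refines every generator'' to ``is a base''; and your remark that monotonicity of the $A_n$ is dispensable is true only because $\lambda$ is assumed closed under finite unions (Remark~\ref{t1_2_2}(2)), which lets you recover directedness from cofinality alone.
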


\begin{proof}
Note that $C_\mu(\overline{A},Y)$ is metrizable by the metric

$d(h_1,h_2)=\min\{\sup\{\rho(h_1(x),h_2(x)):x\in\overline{A}\},1\}$
where $\rho$ is a metric of $Y$. Then the space
$\lim\limits_{\large\longleftarrow}\{C_\mu(\overline{A_n},Y),\pi^{A_1}_{A_2},
\mathbb{N}\}$ is metrizable, too (Corollary 4.2.5 \cite{Eng}).
But, by Proposition 1.2.5 (3) in \cite{FedFil},
$\lim\limits_{\large\longleftarrow} S(X,\lambda,Y)$ is
homeomorphic to
$\lim\limits_{\large\longleftarrow}\{C_\mu(\overline{A_n},Y),\pi^{A_1}_{A_2},\mathbb{N}\}$
and, hence, is metrizable. By Theorem \ref{t1_2_8},
$C_{\lambda,\mu}(X,Y)$ is metrizable.
\end{proof}

The following theorem gives additional embedding properties in
the case when $X$ is a Tychonoff space.

\begin{theorem}\label{t1_2_11}
Let $X$ be a Tychonoff space, $\lambda$ be a cover of $X$ and
$(Y,\mu)$ be a Hausdorff uniform space. If $e:X_{\tau\lambda}\to
X$ is a natural condensation of $X_{\tau\lambda}$ onto $X$, then
$C_{e^{-1}(\lambda),\mu}(X_{\tau\lambda},Y)$ is the completion of
$C_{\lambda,\mu}(X,Y)$.
\end{theorem}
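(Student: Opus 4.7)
The plan is to verify the three standard conditions identifying $C_{e^{-1}(\lambda),\mu}(X_{\tau\lambda},Y)$ as the completion of $C_{\lambda,\mu}(X,Y)$: (i) $e^{\#}$ is a uniform embedding, (ii) its image is dense, and (iii) the target is a complete Hausdorff uniform space. Hausdorffness of both function spaces is automatic from Remark~\ref{t1_2_2}(4), since $\lambda$ covers $X$, $e^{-1}(\lambda)$ covers $X_{\tau\lambda}$, and $(Y,\mu)$ is Hausdorff.

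Step (i) is immediate from Corollary~\ref{t1_2_5}. For step (iii), I would invoke the construction of the $\lambda_f$-leader to note that $X_{\tau\lambda}$ is a $\gamma_f$-space for $\gamma=\{e^{-1}(\overline{A}):A\in\lambda\}$; then Theorem~\ref{t1_2_7} identifies $C_{e^{-1}(\lambda),\mu}(X_{\tau\lambda},Y)$ uniformly with the inverse limit of the spaces $C_\mu(e^{-1}(\overline{A}),Y)$, each of which is uniformly isomorphic, via composition with the homeomorphism $e|_{e^{-1}(\overline{A})}$, to $C_\mu(\overline{A},Y)$. These factors are complete (cf.\ Proposition~\ref{t1_2_9}), and inverse limits of complete uniform spaces are complete~\cite{Eng}, so the target is complete.

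The principal obstacle is step (ii), density. Via the two inverse-limit identifications, a point of $C_{e^{-1}(\lambda),\mu}(X_{\tau\lambda},Y)$ corresponds to a compatible family $\{f_A\in C_\mu(\overline{A},Y):A\in\lambda\}$, while $e^{\#}(C_{\lambda,\mu}(X,Y))$ consists precisely of those families that glue to a globally continuous $h\in C(X,Y)$. Basic entourages of the inverse-limit uniformity involve only finitely many indices, and Remark~\ref{t1_2_2}(2) collapses these to a single $A\in\lambda$ with a single $M\in\mu$. So density boils down to an approximation lemma: given $\tilde f\in C(X_{\tau\lambda},Y)$, $A\in\lambda$, and $M\in\mu$, produce $h\in C(X,Y)$ with $\langle h(e(x)),\tilde f(x)\rangle\in M$ for all $x\in e^{-1}(\overline{A})$. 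I would tackle this by uniformly embedding $(Y,\mu)$ into a product of pseudometric spaces to reduce to the scalar case $Y=\mathbb{R}$, then exploiting that the subspace topologies on $\overline{A}$ inherited from $X$ and from $X_{\tau\lambda}$ coincide (so $\tilde f|_{\overline{A}}$ is genuinely $X$-continuous) and that $C(X_{\tau\lambda},\mathbb{R})=C(X_\lambda,\mathbb{R})$ by the defining property of the Tychonoff functor. The delicate step is the final one: showing that a scalar continuous function on $\overline{A}$ arising as the restriction of an element of $C(X_\lambda,\mathbb{R})$ can be uniformly approximated on $\overline{A}$ by restrictions of elements of $C(X,\mathbb{R})$; this is where the potentially non-normal character of the Tychonoff space $X$ poses the chief technical difficulty, and where any hidden use of the particular structure of the $\lambda_f$-leader (rather than an arbitrary closed set of $X$) will have to be made explicit.
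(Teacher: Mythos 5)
Your steps (i) and (iii) line up with the paper's tools (Corollary~\ref{t1_2_5} for the embedding, Theorem~\ref{t1_2_7} plus completeness of inverse limits for the target), but step (ii), density, is where the proposal stops being a proof. You correctly isolate density as the principal obstacle, reduce it to an approximation lemma --- given $\tilde f\in C(X_{\tau\lambda},Y)$, $A\in\lambda$ and $M\in\mu$, produce $h\in C(X,Y)$ with $\langle h(e(x)),\tilde f(x)\rangle\in M$ for all $x\in e^{-1}(\overline{A})$ --- and then leave that lemma unproved, yourself flagging it as the chief technical difficulty. That lemma is essentially the whole content of the theorem: for a merely Tychonoff (possibly non-normal) $X$ there is no general reason why a continuous function on the closed subspace $\overline{A}$ should be uniformly approximable by restrictions of members of $C(X,Y)$, and the reduction to the scalar case via pseudometrics and the identity $C(X_{\tau\lambda},\mathbb{R})=C(X_\lambda,\mathbb{R})$ does not make the extension/approximation problem any easier. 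So the argument has a genuine gap at its central step. (A side remark: both your step (iii) and the paper's proof implicitly require $(Y,\mu)$ to be complete, an assumption absent from the statement; without it neither space is complete.)

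The paper avoids the approximation problem altogether by an indirect argument that you may want to adopt. Let $Z$ be the closure of $e^{\#}(C_{\lambda,\mu}(X,Y))$ in the complete space $C_{e^{-1}(\lambda),\mu}(X_{\tau\lambda},Y)$; equip the underlying set of $X$ with the topology generated by $\{f^{-1}(U):\ f\in Z,\ U\ \mbox{open in}\ Y\}$ to obtain a space $\widetilde X$ with $Z=C_{\lambda,\mu}(\widetilde X,Y)$. Since $Z$ is closed in a complete space it is complete, so the completeness criterion (Proposition~\ref{t1_2_9}: $C_{\lambda,\mu}$ is complete iff the domain is a $\lambda_f$-space) forces $\widetilde X$ to be a $\lambda_f$-space. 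The topology of $\widetilde X$ lies between those of $X$ and $X_{\tau\lambda}$ and induces the same topology on each $\overline{A}$, so the uniqueness theorem for $\lambda_f$-leaders (Theorem~\ref{t1_1_19}) yields $\widetilde X\cong X_{\tau\lambda}$, hence $Z$ is the whole space and density follows. In short: the paper trades your hard, direct approximation lemma for completeness of the closure plus uniqueness of the leader; your route would need that lemma actually proved, and it appears to be at least as difficult as the theorem itself.
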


\begin{proof} By Proposition~\ref{t1_2_9}, the space
$C_{e^{-1}(\lambda),\mu}(X_{\tau\lambda},Y)$ is complete. Then the
completion of $C_{\lambda,\mu}(X,Y)$ is homeomorphic to some
$Z\subseteq C_{e^{-1}(\lambda),\mu}(X_{\tau\lambda},Y)$.
Obviously, $Z$ is equal to the closure of $C_{\lambda,\mu}(X,Y)$
in the space $C_{e^{-1}(\lambda),\mu}(X_{\tau\lambda},Y)$.
Consider a topology on the set $X$ generated by a base consisting
of the sets $f^{-1}(U)$, where $f\in Z$, $U$ is open in $Y$. We
denote the resulting space as $\widetilde{X}$. Then
$Z=C_{\lambda,\mu}(\widetilde{X},Y)$. Since the space $Z$ is
complete, by Proposition~\ref{t1_2_9}, the space
$\widetilde{X}$ is a $\lambda_f$-space. The topology on the space
$\widetilde{X}$ is stronger than a topology on $X$ and is weaker
than a topology on $X_{\tau\lambda}$, i.e. the space
$X_{\tau\lambda}$ has a condensation onto $\widetilde{X}$, and
$\widetilde{X}$ has a condensation onto $X$. Since the spaces
$X_{\tau\lambda}$ and $X$ induce on elements $\lambda$ the same
topology, the space $\widetilde{X}$ induces on elements
$\lambda$ the same topology. Hence, $\widetilde{X}$ is a
$\lambda$-leader of $X$. Then, by Theorem~\ref{t1_1_19}, the space
$\widetilde{X}$ is homeomorphic to $X_{\tau\lambda}$. It follows
that the space $Z$ is equal to the space
$C_{\lambda,\mu}(\widetilde{X},Y)$ and the space
$C_{e^{-1}(\lambda),\mu}(X_{\tau\lambda},Y)$. Thus,
$C_{e^{-1}(\lambda),\mu}(X_{\tau\lambda},Y)$ is the completion of
$C_{\lambda,\mu}(X,Y)$.
\end{proof}

\section{Embedding theorems for the (weak) set-open topology}

For a space $C_\lambda(X,Y)$ we get the following result.

\begin{theorem}\label{t1_2_13}
Let $X$ and $Y$ be topological spaces, $\lambda\subseteq 2^X$ and
$e:X_\lambda\to X$ be a natural condensation of $\lambda$-leader
$X_\lambda$ onto $X$ then $e^{\#}:C_\lambda(X,Y)\to
C_{e^{-1}(\lambda)}(X_\lambda,Y)$ is embedding.
\end{theorem}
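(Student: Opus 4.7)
The plan is to verify four properties of $e^{\#}$: well-definedness, injectivity, continuity, and openness onto its image. Everything follows from the single fact that $e$ is a continuous bijection (a condensation), together with the subbase description of the set-open topology.

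First I would check that $e^{\#}$ is well-defined: $e$ is continuous, so $h\circ e\in C(X_\lambda,Y)$ for every $h\in C(X,Y)$. Injectivity is then the same argument as in Theorem~\ref{t1_2_3}: if $h_1\neq h_2$, pick $x\in X$ with $h_1(x)\neq h_2(x)$ and evaluate $h_i\circ e$ at $e^{-1}(x)$.

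The core of the argument is a single set-theoretic identity. Since $e$ is a bijection, $e(e^{-1}(A))=A$ for every $A\in\lambda$, and hence for each $h\in C(X,Y)$ and each open $V\subseteq Y$ one has $(h\circ e)(e^{-1}(A))=h(A)$, so that
$$(h\circ e)(e^{-1}(A))\subseteq V \quad\Longleftrightarrow\quad h(A)\subseteq V.$$
This immediately gives the two inclusions needed: first, $(e^{\#})^{-1}\bigl([e^{-1}(A),V]\bigr)=[A,V]$, proving continuity of $e^{\#}$ on subbase elements; and second, $e^{\#}\bigl([A,V]\bigr)=[e^{-1}(A),V]\cap e^{\#}\bigl(C_\lambda(X,Y)\bigr)$, proving that $e^{\#}$ carries subbase-open sets of $C_\lambda(X,Y)$ to relatively open sets in its image. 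Since a bijection between topological spaces that sends a subbase to a subbase of the subspace topology on the image is a homeomorphism onto its image, this completes the proof.

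There is no serious obstacle: the argument is the set-open analogue of the uniform argument in Theorem~\ref{t1_2_3}, with entourage bookkeeping replaced by elementary subbase bookkeeping. The only point requiring care is to invoke bijectivity of $e$ (not merely continuity) at the step $e(e^{-1}(A))=A$; without this, the equivalence between $(h\circ e)(e^{-1}(A))\subseteq V$ and $h(A)\subseteq V$ would fail, and $e^{\#}$ would only be continuous, not open onto its image.
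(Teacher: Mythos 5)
Your proposal is correct and follows essentially the same route as the paper's own proof: both arguments reduce everything to the identity $e^{\#}([A,V])=e^{\#}\bigl(C_\lambda(X,Y)\bigr)\cap[e^{-1}(A),V]$, obtained from $e(e^{-1}(A))=A$, and handle injectivity exactly as in Theorem~\ref{t1_2_3}. Your version merely makes explicit the well-definedness step and the general principle that a bijection carrying a subbase to a subbase of the image is a homeomorphism onto its image, which the paper leaves implicit.
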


\begin{proof}
Let $[A,V]=\{f\in C_\lambda(X,Y):f(A)\subseteq V\}$ be an element
of a subbase of $C_\lambda(X,Y)$. Consider $[e^{-1}(A),V]= \{g\in
C_{e^{-1}(\lambda)}(X_\lambda,Y):g(e^{-1}(A))\subseteq V\}.$
Obviously, it belongs to a subbase of the space
$C_{e^{-1}(\lambda)}(X_{\lambda},Y)$. We show that
$e^{\#}([A,V])=e^{\#}(C_\lambda(X,Y))\bigcap [e^{-1}(A),V]$.
Indeed, if $f\in [A,V]$, then
$e^{\#}(f)(e^{-1}(A))=f(e(e^{-1}(A)))=f(A)\subseteq V$, i.e.
$e^{\#}(f)\in [e^{-1}(A),V]$. Conversely, let $g\in
[e^{-1}(A),V]\bigcap e^{\#}(C_\lambda(X,Y))$ then there is $g'$
such that $e^{\#}(g')=g$ and $g'(A)=g(e^{-1}(A))\subseteq V$.
Thus, we show that the maps $e^{\#}$ and $(e^{\#})^{-1}$ are
continuous. The one-to-one mapping $e^{\#}$ is proved in the same
way as in Theorem ~\ref{t1_2_5}.
\end{proof}

Let $A$ be a subset of $X$ and $\lambda\subseteq 2^X$. Denote by
$\lambda_A=\{A\bigcap B:B\in \lambda\}$.

\begin{theorem}\label{t1_2_14}
Let $X$ and $Y$ be Hausdorff topological spaces, $\lambda$ be a
cover of $X$ such that for any $A_1,A_2\in\lambda$, the sets
$A_1\bigcup A_2$ and $A_1\bigcap A_2$ belongs to $\lambda$. If $X$
is a $\lambda_f$-space then $C_\lambda(X,Y)$ is homeomorphic to an
inverse limit of the system
$S_{*}(X,\lambda,Y)=\{C_{\lambda_{\overline{A}}}(\overline{A},Y),\pi^{A_1}_{A_2},\lambda\}$.
\end{theorem}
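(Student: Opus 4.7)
The plan is to reuse the scheme of Theorem~\ref{t1_2_7}, replacing the uniform entourages $\langle A,M\rangle$ by the set-open subbasic neighborhoods $[A,V]$, and to build an explicit homeomorphism by restriction and gluing. Concretely, define
\[
F : C_\lambda(X,Y)\to\lim\limits_{\longleftarrow}S_*(X,\lambda,Y),\qquad F(g)=\{g|_{\overline{A}}\}_{A\in\lambda},
\]
with candidate inverse $F^{-1}(\{f_A\})=\bigtriangledown_{A\in\lambda}f_A$, the pointwise gluing along the cover $\lambda$; compatibility of $\{f_A\}$ on overlaps makes this well defined. Injectivity of $F$ is immediate since two distinct continuous functions on $X$ must disagree at some point of $\bigcup\lambda$.

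\textbf{Step 1 (bonding maps).} I would first verify that, for $A_2\le A_1$, the restriction $\pi^{A_1}_{A_2}:C_{\lambda_{\overline{A_1}}}(\overline{A_1},Y)\to C_{\lambda_{\overline{A_2}}}(\overline{A_2},Y)$ is continuous. A subbasic open set $[B,V]$ in the target has $B=\overline{A_2}\cap C$ with $C\in\lambda$; since Remark~\ref{t1_2_2}(3) gives $\overline{A_2}\in\lambda$ and $\lambda$ is closed under intersections by hypothesis, $B\in\lambda$, and since $B\subseteq\overline{A_2}\subseteq\overline{A_1}$ we have $B=\overline{A_1}\cap B\in\lambda_{\overline{A_1}}$. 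Hence $(\pi^{A_1}_{A_2})^{-1}[B,V]=[B,V]$ viewed inside $C_{\lambda_{\overline{A_1}}}(\overline{A_1},Y)$, which is subbasic open.

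\textbf{Step 2 (continuity of $F$ and $F^{-1}$).} Generating open sets of the inverse limit topology are of the form $\pi_A^{-1}[B,V]$ with $B\in\lambda_{\overline{A}}$; by the same argument as in Step~1, $B\in\lambda$, and the pull-back under $F$ is exactly $[B,V]\subseteq C_\lambda(X,Y)$, so $F$ is continuous. Conversely, a subbasic set $[A,V]$ of $C_\lambda(X,Y)$ corresponds to the condition $f_{\overline{A}}(A)\subseteq V$ on the coordinates of the inverse limit (using $A=\overline{A}\cap A\in\lambda_{\overline{A}}$), so $F([A,V])=\pi_{\overline{A}}^{-1}[A,V]\cap F(C_\lambda(X,Y))$; once surjectivity of $F$ is established in Step~3, this shows $F^{-1}$ is continuous.

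\textbf{Step 3 (gluing; main obstacle).} The delicate point is surjectivity of $F$: given a compatible family $\{f_A\}$, I need $\bigtriangledown_{A\in\lambda}f_A:X\to Y$ to be continuous. This is precisely where the $\lambda_f$-space hypothesis enters, by the argument used in the proof of Theorem~\ref{t1_2_7}: each restriction is continuous by assumption, and the defining property of a $\lambda_f$-space lifts this to continuity on $X$. The subtlety I expect to wrestle with is that the definition of $\lambda_f$-space tests continuity against Tychonoff targets, whereas $Y$ here is only Hausdorff; I would address this by post-composing with the canonical map $\tau_Y:Y\to Y_\tau$ into the Tychonoff reflection (or, when applicable, with an embedding into a Tychonoff cube), applying the $\lambda_f$-property coordinatewise as in the proof of the equivalence of $\lambda_f$- and $\lambda_R$-spaces, and transporting continuity of $\bigtriangledown f_A$ back through that embedding. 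This reduction is the only nontrivial step; all the rest is bookkeeping with subbases.
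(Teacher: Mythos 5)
Your overall route is the same as the paper's: build the gluing bijection $F$ exactly as in Theorem~\ref{t1_2_7} and then check that subbasic sets correspond to subbasic sets, using closure of $\lambda$ under intersections (together with the standing assumptions of Remark~\ref{t1_2_2}) to see that every $B\in\lambda_{\overline{A}}$ is again an element of $\lambda$. Your Steps 1 and 2 are in fact more careful than the paper, which does not verify continuity of the bonding maps $\pi^{A_1}_{A_2}$ at all in the set-open setting and only records the identities $F(\pi^{-1}_{\overline{A}}([B,V]_{\overline{A}}))=[B,V]$ and $F^{-1}([A,V])=\pi^{-1}_{\overline{A}}([A,V]_{\overline{A}})$.

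The one genuine problem is your Step 3, and your proposed repair does not close it. The $\lambda_f$-property only tests continuity against Tychonoff targets, and for a merely Hausdorff $Y$ the canonical map $\tau_Y:Y\to Y_\tau$ need not be injective, let alone an embedding (e.g.\ $Y_\tau$ can be a single point), so continuity of $\tau_Y\circ\bigtriangledown_{A\in\lambda}f_A$ gives you nothing about $\bigtriangledown_{A\in\lambda}f_A$ itself; there is no ``transporting back'' through a map that is not an embedding. In Theorem~\ref{t1_2_7} this issue is invisible because a Hausdorff uniform space is automatically Tychonoff, but here $Y$ is only Hausdorff, so the argument genuinely breaks. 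To make Step 3 work you must either (i) additionally assume $Y$ is Tychonoff (or at least that $Y$ embeds in $Y_\tau$), or (ii) strengthen the hypothesis on $X$ from $\lambda_f$-space to $\lambda$-space, since the argument of Theorem~\ref{th119} (preimages of closed sets meet each $\overline{A}$ in a closed set) glues continuity for an arbitrary topological target. Note that the paper's own proof, which simply refers back to Theorem~\ref{t1_2_7} for bijectivity, silently inherits the same defect; you deserve credit for spotting it, but the fix you sketch is not a fix.
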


\begin{proof}
Let $F:S_{*}(X,\lambda,Y)\to C_\lambda(X,Y)$ be a compatible
mapping. The one-to-one mapping is proved in the same way as in
Theorem~\ref{t1_2_7}.

Let us claim that $F$ is a homeomorphism. A subbase of the space
$S_{*}(X,\lambda,Y)$ consists of sets of the form
$\pi^{-1}_{\overline{A}}([B,V]_{\overline{A}})$ where
$\pi_{\overline{A}}$ is a projection of $S_{*}(X,\lambda,Y)$ on
$C_{\lambda_{\overline{A}}}(\overline{A},Y)$, and
$[B,V]_{\overline{A}}$ is an element of subbase of the space
$C_{\lambda_{\overline{A}}}(\overline{A},Y)$. We prove that, the
mapping transforms this subbase into a standard subbase on
$C_\lambda(X,Y)$ consisting of sets of the form $[A,V]$.

Indeed, $F(\pi^{-1}_{\overline{A}}([B,V]_{\overline{A}}))=[B,V]$
and $f^{-1}([A,V])=\pi^{-1}_{\overline{A}}([A,V]_{\overline{A}})$.
Hence, $F$ is a homeomorphism.
\end{proof}

\begin{corollary}\label{t1_2_15}
{\it Let $X$ and $Y$ be a Hausdorff topological spaces, $\lambda$
be a cover of $X$ and $\lambda$ be closed with respect to finite
union and finite intersection. Then $C_\lambda(X,Y)$ is embedding
into an inverse limit of $S_{*}(X,\lambda,Y)$.}
\end{corollary}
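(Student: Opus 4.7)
The plan is to reduce the corollary to Theorem~\ref{t1_2_14} by passing to the $\lambda$-leader $X_{\lambda}$ and using the embedding of Theorem~\ref{t1_2_13}. Let $e:X_{\lambda}\to X$ be the natural condensation, and set $\gamma=e^{-1}(\lambda)=\{e^{-1}(A):\,A\in\lambda\}$. Recall from the construction of the $\lambda$-leader that $X_{\lambda}$ is a $\gamma$-space and that $e|_{e^{-1}(\overline{A})}:\,e^{-1}(\overline{A})\to\overline{A}$ is a homeomorphism for every $A\in\lambda$; in particular $\overline{e^{-1}(A)}=e^{-1}(\overline{A})$ in $X_{\lambda}$. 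Then Theorem~\ref{th119} gives that $X_{\lambda}$ is also a $\gamma_f$-space.

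Next I would verify that $\gamma$ inherits the hypothesis on $\lambda$. Since $e$ is a bijection, $e^{-1}(A_1)\cup e^{-1}(A_2)=e^{-1}(A_1\cup A_2)$ and $e^{-1}(A_1)\cap e^{-1}(A_2)=e^{-1}(A_1\cap A_2)$, so $\gamma$ is closed under finite unions and finite intersections; and $\gamma$ covers $X_{\lambda}$ because $\lambda$ covers $X$ and $e$ is a bijection. Hence all hypotheses of Theorem~\ref{t1_2_14} are satisfied for $(X_{\lambda},\gamma,Y)$, yielding a homeomorphism
\[
C_{\gamma}(X_{\lambda},Y)\;\cong\;\lim_{\longleftarrow}S_{*}(X_{\lambda},\gamma,Y).
\]
By Theorem~\ref{t1_2_13}, the induced map $e^{\#}:C_{\lambda}(X,Y)\to C_{\gamma}(X_{\lambda},Y)$ is an embedding, so $C_{\lambda}(X,Y)$ embeds into $\lim_{\longleftarrow}S_{*}(X_{\lambda},\gamma,Y)$.

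It remains to identify this inverse limit with $\lim_{\longleftarrow}S_{*}(X,\lambda,Y)$. For each $A\in\lambda$ the homeomorphism $e|_{e^{-1}(\overline{A})}$ induces, via composition, a homeomorphism $C_{\lambda_{\overline{A}}}(\overline{A},Y)\to C_{\gamma_{e^{-1}(\overline{A})}}(e^{-1}(\overline{A}),Y)$, because $e$ carries the trace family $\lambda_{\overline{A}}$ bijectively onto $\gamma_{e^{-1}(\overline{A})}$ and preserves the subbasic sets $[B,V]$ of the set-open topology. These homeomorphisms commute with the bonding maps $\pi^{A_1}_{A_2}$ (both sides are restriction of functions to a smaller closed set, transported by the same homeomorphism $e$), so they assemble into an isomorphism of inverse systems $S_{*}(X,\lambda,Y)\cong S_{*}(X_{\lambda},\gamma,Y)$ and hence a homeomorphism of their inverse limits. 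Composing with $e^{\#}$ gives the desired embedding of $C_{\lambda}(X,Y)$ into $\lim_{\longleftarrow}S_{*}(X,\lambda,Y)$.

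The main technical obstacle I anticipate is the last step: verifying that the system isomorphism is honest, i.e., that the induced maps on function spaces actually commute with all bonding maps $\pi^{A_1}_{A_2}$ for $A_2\le A_1$, and that the subbases of the $\lambda_{\overline{A}}$-open and $\gamma_{e^{-1}(\overline{A})}$-open topologies correspond under $e$. This is a bookkeeping check rather than a deep issue, and it rests solely on the fact that $e$ restricts to a homeomorphism on each $e^{-1}(\overline{A})$, together with the bijectivity of $e$.
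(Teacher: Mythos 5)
Your proposal is correct and follows exactly the route the paper intends: the corollary is stated without proof precisely because it is the composition of Theorem~\ref{t1_2_13} (the embedding $e^{\#}$ into $C_{e^{-1}(\lambda)}(X_\lambda,Y)$) with Theorem~\ref{t1_2_14} applied to the $\lambda$-leader, which is a $\gamma$-space and hence a $\gamma_f$-space by Theorem~\ref{th119}, followed by the identification of the two inverse systems via the homeomorphisms $e|_{e^{-1}(\overline{A})}$. Your extra bookkeeping (that $\gamma=e^{-1}(\lambda)$ inherits the closure properties, that $X_\lambda$ is Hausdorff, and that the system isomorphism commutes with the bonding maps) fills in details the paper leaves implicit, and it is all sound.
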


For a space $C_{\lambda^*}(X,Y)$ we get the similar result.

\begin{theorem}\label{t1_2_131}
Let $X$ and $Y$ be topological spaces, $\lambda\subseteq 2^X$ and
$e:X_\lambda\to X$ be a natural condensation of $\lambda$-leader
$X_\lambda$ onto $X$ then $e^{\#}:C_{\lambda^*}(X,Y)\to
C_{e^{-1}(\lambda)^*}(X_\lambda,Y)$ is embedding.
\end{theorem}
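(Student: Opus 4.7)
The plan is to mirror the proof of Theorem~\ref{t1_2_13}, adjusting only for the extra closure appearing in the definition of the weak set-open subbasic sets $[F,U]^{*}$. First I would check injectivity of $e^{\#}$ by the same argument used in Theorems~\ref{t1_2_3} and \ref{t1_2_13}: since the condensation $e\colon X_\lambda\to X$ is a bijection, if $h_1\neq h_2$ in $C(X,Y)$ then there is $x\in X$ with $h_1(x)\neq h_2(x)$, whence $e^{\#}(h_1)(e^{-1}(x))=h_1(x)\neq h_2(x)=e^{\#}(h_2)(e^{-1}(x))$.

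The remaining task is to show that $e^{\#}$ is a homeomorphism onto its image, and for this it suffices to verify that subbasic open sets correspond under $e^{\#}$. I would fix an arbitrary subbasic set $[A,V]^{*}=\{f\in C(X,Y):\overline{f(A)}\subseteq V\}$ of $C_{\lambda^{*}}(X,Y)$, where $A\in\lambda$ and $V$ is open in $Y$, and compare it with $[e^{-1}(A),V]^{*}=\{g\in C(X_\lambda,Y):\overline{g(e^{-1}(A))}\subseteq V\}$, which is a subbasic set of $C_{e^{-1}(\lambda)^{*}}(X_\lambda,Y)$. The target is the equality
$$e^{\#}([A,V]^{*})=e^{\#}(C_{\lambda^{*}}(X,Y))\cap [e^{-1}(A),V]^{*},$$
which yields both continuity of $e^{\#}$ and continuity of its inverse on the image.

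The key observation is that $e$ is a bijection, so for every $f\in C(X,Y)$ one has $e^{\#}(f)(e^{-1}(A))=f(e(e^{-1}(A)))=f(A)$ as subsets of $Y$. Since the closures defining the weak set-open topology are taken in the codomain $Y$, which is the same space on both sides, it follows that $\overline{e^{\#}(f)(e^{-1}(A))}=\overline{f(A)}$ in $Y$. Thus $e^{\#}(f)\in [e^{-1}(A),V]^{*}$ if and only if $f\in [A,V]^{*}$, proving the displayed equality, and the argument in the reverse direction (given $g\in [e^{-1}(A),V]^{*}\cap e^{\#}(C_{\lambda^{*}}(X,Y))$, write $g=e^{\#}(g')$ and observe $g'(A)=g(e^{-1}(A))$) is identical in spirit to the corresponding step of Theorem~\ref{t1_2_13}.

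There is essentially no obstacle: the only subtle point worth flagging is precisely that the closure in $[F,U]^{*}$ is computed in $Y$, not in the source, so switching the domain from $X$ to its $\lambda$-leader $X_\lambda$ does not disturb it. This is what makes the weak case go through by exactly the same formal manipulations as the set-open case.
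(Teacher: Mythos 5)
Your proof is correct and is exactly the intended argument: the paper states Theorem~\ref{t1_2_131} without proof as a ``similar result'' to Theorem~\ref{t1_2_13}, and your adaptation --- noting that $e^{\#}(f)(e^{-1}(A))=f(A)$ as subsets of $Y$, so the closure in the codomain is unaffected by replacing $X$ with $X_\lambda$ --- is precisely the adjustment needed. Nothing further is required.
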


\begin{theorem}\label{t1_2_142}
Let $X$ and $Y$ be Hausdorff topological spaces, $\lambda$ be a
cover of $X$ such that for any $A_1,A_2\in\lambda$, the sets
$A_1\bigcup A_2$ and $A_1\bigcap A_2$ belongs to $\lambda$. If $X$
is a $\lambda_f$-space, then $C_{\lambda^*}(X,Y)$ is homeomorphic
to an inverse limit of the system
$S_{*}(X,\lambda,Y)=\{C_{\lambda^*_{\overline{A}}}(\overline{A},Y),\pi^{A_1}_{A_2},\lambda\}$.
\end{theorem}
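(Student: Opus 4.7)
The plan is to mirror the proof of Theorem~\ref{t1_2_14}, replacing the set-open subbase elements $[B,V]$ by their weak analogues $[B,V]^{*}=\{f\in C(X,Y):\overline{f(B)}\subseteq V\}$, and showing that the bijection between the inverse limit and $C_{\lambda^{*}}(X,Y)$ is still a homeomorphism. Throughout, I assume the natural conditions of Remark~\ref{t1_2_2}; in particular $\lambda$ is closed under taking closures, together with the hypothesis that $\lambda$ is closed under finite union and finite intersection.

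First I would define the compatible map $F:\lim_{\longleftarrow}S_{*}(X,\lambda,Y)\to C_{\lambda^{*}}(X,Y)$ by $F(\{f_{A}\})=\bigtriangledown_{A\in\lambda}f_{A}$, exactly as in Theorem~\ref{t1_2_7}. Since $\lambda$ covers $X$ and the family $\{f_{A}\}$ is compatible, $\bigtriangledown_{A\in\lambda}f_{A}$ is a well-defined function on $X$; its restriction to each $\overline{A}$ coincides with $f_{A}$ and hence is continuous. Because $X$ is a $\lambda_{f}$-space, $\bigtriangledown_{A\in\lambda}f_{A}$ is continuous on $X$, and the resulting $F$ is a bijection onto $C_{\lambda^{*}}(X,Y)$ (injectivity and surjectivity are proved just as in Theorem~\ref{t1_2_7} and Theorem~\ref{t1_2_14}).

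The main step is to check that $F$ sends a subbase of the inverse limit to a subbase of $C_{\lambda^{*}}(X,Y)$, and conversely. A typical subbasic open set of $\lim_{\longleftarrow}S_{*}(X,\lambda,Y)$ has the form $\pi_{\overline{A}}^{-1}([B,V]^{*}_{\overline{A}})$, where $A\in\lambda$, $V$ is open in $Y$, and $B\in\lambda^{*}_{\overline{A}}$, i.e., $B=\overline{A}\cap C$ for some $C\in\lambda$. By the assumptions on $\lambda$, both $\overline{A}$ and $C$ lie in $\lambda$, and $B=\overline{A}\cap C\in\lambda$. The key observation is that for any continuous $f:X\to Y$ the closure $\overline{f(B)}$ taken in $Y$ does not depend on whether $f$ is regarded as a function on $X$ or as the restriction $f|_{\overline{A}}$; it is the same subset of $Y$ in either case. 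Therefore
\[
F\bigl(\pi_{\overline{A}}^{-1}([B,V]^{*}_{\overline{A}})\bigr)=[B,V]^{*}.
\]
Conversely, given a subbasic $[A,V]^{*}$ of $C_{\lambda^{*}}(X,Y)$ with $A\in\lambda$, set $A'=\overline{A}\in\lambda$ and note that $A=A'\cap A\in\lambda^{*}_{A'}$, whence $F^{-1}([A,V]^{*})=\pi_{A'}^{-1}([A,V]^{*}_{A'})$.

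The only point that requires care, and the place I expect the subtlety to sit, is the claim that $\overline{f(B)}$ (computed in $Y$) coincides with $\overline{f|_{\overline{A}}(B)}$; once this is observed, the matching of subbases is immediate and the theorem follows. Neither closure of $\lambda$ under closures nor closure under intersection can be dropped in this step, because I need $B=\overline{A}\cap C$ itself to be a legitimate subbase index in $\lambda$ on the $C_{\lambda^{*}}(X,Y)$ side. With these ingredients in place, $F$ is a homeomorphism, completing the proof.
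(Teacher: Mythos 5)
Your proposal is correct and follows essentially the same route the paper intends: the theorem is stated without a separate proof precisely because it is the weak set-open analogue of Theorem~\ref{t1_2_14}, and your adaptation — matching subbases via $F(\pi_{\overline{A}}^{-1}([B,V]^{*}_{\overline{A}}))=[B,V]^{*}$, using closure of $\lambda$ under closures and intersections so that $B=\overline{A}\cap C\in\lambda$ — is exactly the intended argument. Your explicit observation that $\overline{f(B)}$ is computed in $Y$ and hence is the same set whether $f$ is viewed on $X$ or restricted to $\overline{A}$ is the one point the paper leaves implicit, and you identify it correctly.
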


\section{Algebraic properties of function spaces}

In papers \cite{os4,os6,os11}, the algebraic properties of the
space $C_{\lambda}(X,Y)$ was investigated. In this section we
study the following question: under what conditions should the
 construction, constructing in the previous section, preserves the algebraic
properties which naturally arise on $C_{\lambda,\mu}(X,Y)$.

\subsection{Topological groups}

 Let $Y$ be a Hausdorff abelian group with the identity element $e$ and $X$ be a Hausdorff space. Then $C(X,Y)$ is an
abelian group respect of operation of the pointwise
multiplication, i.e. $(f*g)(x)=f(x)*g(x)$ for every $x\in X$.
Group operation in $Y$ we denote by $*$ and an inverse element for
$b$ we denote by $b^-$.

Consider the function $i\in C(X,Y)$ such that $i(x)=e$ for every
$x\in X$. Then $i$ is an identity element of the group $C(X,Y)$.
On the set $C(X,Y)$, one can introduce a topology consistent with
the group structure, specifying a base neighborhoods of $i$ as
follows.

Let $\lambda$ be a cover of $X$, $\beta_c=\{[A,V]: A\in\lambda,
V\in\beta\}$ where $\beta$ is a fundamental family of
neighborhoods of $e$ in $Y$ and $[A,V]=\{f\in C(X,Y):f(A)\subseteq
V\}$.

Proposition \ref{t1_3_1} shows that the system $\beta_c$ satisfies
the conditions necessary and sufficient for the shifts of the
elements $\beta_c$ to form the base of the topology consistent
with the group structure of $C(X,Y)$. The topology described above
will be denoted as $C_{\lambda,G}(X,Y)$.

\begin{proposition}\label{t1_3_1} For the family $\beta_c$ holds:
\begin{enumerate}
\item For every $W\in\beta_c$ there is $W^1\in\beta_c$ such that
$W^1*W^1\subseteq W$; \item For every $W\in\beta_c$,
$W^-\in\beta_c$,
\end{enumerate}

i.e. $C_{\lambda,G}(X,Y)$ is a topological group.
\end{proposition}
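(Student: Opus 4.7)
The plan is to verify both conditions by pulling the corresponding properties back from the group topology on $Y$ via the pointwise definition of $\ast$ on $C(X,Y)$. First, I will tacitly assume the standard harmless reduction that the fundamental family $\beta$ of neighborhoods of $e$ in $Y$ is closed under the inversion map $V\mapsto V^{-}$; this is achieved by replacing $\beta$ with $\{V\cap V^{-}:V\in\beta\}$, which is still a fundamental family at $e$ since inversion is a homeomorphism.

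For condition (1), fix $W=[A,V]\in\beta_c$. Continuity of multiplication in $Y$ at $(e,e)$ yields some $V^{1}\in\beta$ with $V^{1}\ast V^{1}\subseteq V$. Set $W^{1}=[A,V^{1}]\in\beta_c$. For any $f,g\in W^{1}$ and $x\in A$ we have $(f\ast g)(x)=f(x)\ast g(x)\in V^{1}\ast V^{1}\subseteq V$, so $f\ast g\in [A,V]=W$.

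For condition (2), the key computation is $[A,V]^{-}=[A,V^{-}]$: an element $h$ lies in $[A,V]^{-}$ iff $h^{-}\in[A,V]$ iff $h(x)^{-}=h^{-}(x)\in V$ for every $x\in A$ iff $h(x)\in V^{-}$ for every $x\in A$. Since $V^{-}\in\beta$ by the preliminary reduction, $W^{-}=[A,V^{-}]\in\beta_c$.

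To close out the claim that $C_{\lambda,G}(X,Y)$ is a topological group, I would verify the remaining standard items unsaid in the statement: the filter-base property of $\beta_c$, which follows from Remark \ref{t1_2_2}(2) (closure of $\lambda$ under finite unions) together with the inclusion $[A_1\cup A_2,V_1\cap V_2]\subseteq[A_1,V_1]\cap[A_2,V_2]$; and the automatic conjugation-invariance, since $Y$ (and hence $C(X,Y)$) is abelian. The conclusion then follows from the classical Pontryagin criterion for a system of neighborhoods of the identity to generate a group topology. The main (mild) obstacle here is essentially bookkeeping: the preliminary reduction on $\beta$ is what makes (2) hold literally, as the statement requires, rather than only up to containment.
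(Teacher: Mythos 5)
Your proof is correct and follows essentially the same route as the paper: obtain $V^{1}$ with $V^{1}*V^{1}\subseteq V$ from continuity of multiplication in $Y$ and check $[A,V^{1}]*[A,V^{1}]\subseteq[A,V]$ pointwise, then compute $[A,V]^{-}=[A,V^{-}]$ and invoke the standard Bourbaki-type criterion for a neighborhood filter of the identity. Your preliminary reduction making $\beta$ inversion-closed is a small point of extra care that the paper leaves implicit, but it does not change the argument.
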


\begin{proof} Claim (1). Let
$W\in\beta_c$, then $W=[A,V]$ where $V$ is a neighborhood of $e$
in $Y$. For $V$ there is $V^{1}$ such that $V^{1}*V^{1}\subseteq
V$. The set $W^{1}=[A,V^{1}]$ belongs to $\beta_c$ and
$W^{1}*W^{1}$ = $\{f*g:f\in W^{1},g\in W^{1}\}$ =$\{f*g:f(A)\in
V^{1} \;$ and $\; g(A)\in V^{1}\}$. Clearly, $f*g$ mapping the set
$A$ in the set $V^{1}*V^{1}\subseteq V$, i.e.
$W^{1}*W^{1}\subseteq W$.

Claim (2). Let $W\in\beta_c$, then $W=[A,V]$. The set
$W^-=\{f^-:f\in W\}=\{f^-:f(A)\subseteq V\}=\{g(A):g(A)\subseteq
V^-\}$, i.e.,  $W^-=[A,V^-]$  and, hence, belongs to $\beta_c$. By
Proposition 1 (3) in \cite{Bur69}, $C_{\lambda,G}(X,Y)$ is a
topological group.
\end{proof}

If $Y$ is a Hausdorff abelian group, then  $Y$ has a uniform
structure consistent with the topology on $Y$ as follows: if
$\beta$ is a base of neighborhoods of $i$ in $Y$, then the system
$\{\mu_V:V\in\beta\}$ of neighborhoods of diagonal where
$\mu_V=\{<x,y>\in Y\times Y:y*x^-\in V\}$ is a base of uniformity
$\mu$ on $Y$. This uniformity, we will call {\it natural}.

\begin{proposition}\label{t1_3_2} Let $X$ be Hausdorff space, $\lambda$ be a cover of $X$, $Y$ be a Hausdorff abelian group and
$\mu$ be a natural uniformity on $Y$. Then the topologies of
spaces $C_{\lambda,G}(X,Y)$ and $C_{\lambda,\mu}(X,Y)$ coincide.
\end{proposition}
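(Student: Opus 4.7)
The plan is to prove the equality of topologies by showing that, at an arbitrary function $f\in C(X,Y)$, the two natural bases of neighborhoods consist of exactly the same sets. Both topologies are translation-invariant — the group topology by Proposition~\ref{t1_3_1}, and the topology of uniform convergence because the natural entourages $\mu_V=\{\langle a,b\rangle:b*a^-\in V\}$ are invariant under translations of $Y$ — so one could alternatively compare only neighborhoods of the identity $i$, but since the computation is no harder at a general $f$, I would work there directly.

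Fix $A\in\lambda$ and a neighborhood $V$ of $e$ in $Y$ drawn from the fundamental family $\beta$. Unfolding Definition~\ref{t1_2_1} with the entourage $\mu_V$, a basic neighborhood of $f$ in $C_{\lambda,\mu}(X,Y)$ is
\[\{g\in C(X,Y):\langle f(x),g(x)\rangle\in\mu_V\text{ for all }x\in A\}=\{g:g(x)*f(x)^-\in V\text{ for all }x\in A\}.\]
On the other hand, $C_{\lambda,G}(X,Y)$ is a topological group whose neighborhoods of $i$ are generated by $\beta_c=\{[A,V]\}$, so a basic neighborhood of $f$ is the translate $f*[A,V]=\{f*h:h(A)\subseteq V\}$. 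Because $Y$ is abelian, setting $g=f*h$ gives $h=g*f^-$, so the condition $h(A)\subseteq V$ reads precisely as $g(x)*f(x)^-\in V$ for every $x\in A$. Hence the two basic neighborhoods of $f$ coincide set-theoretically.

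Running this identification over all pairs $(A,V)\in\lambda\times\beta$ shows the two topologies admit the same neighborhood base at every point, hence they coincide. The main — and really the only — subtle point is the preliminary observation that the natural uniformity $\mu$ on $Y$ is invariant under translations, which is what makes the entourage-based neighborhoods of $f$ in $C_{\lambda,\mu}(X,Y)$ match the group-translates of $[A,V]$; this is immediate from the definition of $\mu_V$ and the abelian group law, so the proof reduces essentially to the one-line calculation above.
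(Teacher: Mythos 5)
Your proof is correct and follows essentially the same route as the paper's: both compare the basic neighborhoods of an arbitrary $f$, identifying the entourage-neighborhood $\{g:\ g(x)*f(x)^-\in V\ \text{for all}\ x\in A\}$ with the translate $f*[A,V]$ via the substitution $g=f*h$. Your version is slightly more explicit about where commutativity of $Y$ is used, but the underlying computation is the same.
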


\begin{proof}
Let $f*U(A,V)$ be a neighborhood of a function $f\in
C_{\lambda,G}(X,Y)$. Then $f*U(A,V) =\{f*g:g\in
U(A,V)\}=\{f*g:g(A)\subseteq V\}$. Let $<A,M>$ be an element of
uniformity on  $C_{\lambda,\mu}(X,Y)$, $<A,M>=\{<f,g>:\forall x\in
A <f(x),g(x)> \in M\}$ where $M=\{<x,y>:y*x^-\in V\}$. A
neighborhood of $f$ with respect to this element of uniformity is
the set
$$<A,M>(f)=\{g\in C(X,Y):<f,g>\in <A,M>\}=$$
$$=\{g\in C(X,Y):\forall x\in A\Rightarrow g(x)*f(x)^-\in V\}=$$
$$\{f*g:\forall x\in A\Rightarrow g(x)\in V\}=f*[A,M].$$
Thus, topologies $C_{\lambda,G}(X,Y)$ and $C_{\lambda,\mu}(X,Y)$
are coincide.

\end{proof}

\begin{remark}\label{t1_3_3}
A uniformity of the space $C_{\lambda,\mu}(X,Y)$ is a natural
uniformity of topological group $C_{\lambda,G}(X,Y)$.
\end{remark}

\begin{corollary}\label{t1_3_4}
{\it Let $X$ be a Hausdorff space, $\lambda$ be a cover of $X$,
$Y$ be a Hausdorff abelian group and $\mu$ be a natural uniformity
on $Y$. Then $C_{\lambda,\mu}(X,Y)$ is a Hausdorff abelian group.
In particular, for a Hausdorff space $X$ and a cover $\lambda$ of
$X$ the space $C_{\lambda,\rho}(X)$ is a topological group.}
\end{corollary}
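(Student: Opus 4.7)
The plan is to read off the corollary as an immediate consequence of Proposition~\ref{t1_3_1} and Proposition~\ref{t1_3_2}, together with the Hausdorff criterion recorded in Remark~\ref{t1_2_2}. First I would invoke Proposition~\ref{t1_3_1}, which says that the family $\beta_c=\{[A,V]:A\in\lambda,\ V\in\beta\}$ forms the base at the identity of a group topology on $C(X,Y)$, so that $C_{\lambda,G}(X,Y)$ is already a topological group under pointwise multiplication. Next I would apply Proposition~\ref{t1_3_2}, whose content is exactly that the group topology $C_{\lambda,G}(X,Y)$ and the uniform topology $C_{\lambda,\mu}(X,Y)$ coincide when $\mu$ is the natural uniformity of the Hausdorff abelian group $Y$. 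Combining these two facts gives $C_{\lambda,\mu}(X,Y)$ the structure of a topological group with the prescribed topology.

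It remains to check the Hausdorff property. Since $\lambda$ is a cover of $X$, the set $\bigcup\lambda=X$ is trivially dense in $X$, so the criterion recorded in Remark~\ref{t1_2_2} (quoting \cite{Bur75b}) gives that $C_{\lambda,\mu}(X,Y)$ is Hausdorff. (Alternatively, one notes directly that two distinct continuous functions $f,g$ differ at some point $x_0\in\bigcup\lambda$, pick $A\in\lambda$ with $x_0\in A$, and separate $f(x_0),g(x_0)$ by an entourage $M\in\mu$ using the Hausdorff property of $Y$; then $\langle A,M\rangle$ separates $f$ and $g$.)

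For the ``in particular'' clause I would specialize to $Y=\mathbb{R}$ equipped with its additive group structure. Taking $\rho$ to be the usual metric on $\mathbb{R}$, the uniformity $\mu$ induced by $\rho$ coincides with the natural uniformity of the Hausdorff abelian group $(\mathbb{R},+)$, because both are generated by entourages of the form $\{\langle x,y\rangle : |y-x|<\varepsilon\}$. Hence the previous paragraph applies, yielding that $C_{\lambda,\rho}(X)=C_{\lambda,\mu}(X,\mathbb{R})$ is a (Hausdorff) topological group.

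There is no real obstacle here: the content has been packaged into Propositions~\ref{t1_3_1} and \ref{t1_3_2}, so the corollary is essentially an assembly step. The only point requiring a moment of care is the verification that the metric uniformity on $\mathbb{R}$ agrees with the natural group uniformity, which is immediate from the translation invariance of $\rho(x,y)=|y-x|$.
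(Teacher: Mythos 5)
Your proposal is correct and follows essentially the same route as the paper: the paper's proof likewise combines Propositions~\ref{t1_3_1} and \ref{t1_3_2} to get the topological group structure, invokes the Hausdorff criterion (citing Bourbaki rather than Remark~\ref{t1_2_2}, but with the same content), and handles the ``in particular'' clause by observing that the usual metric on $\mathbb{R}$ generates the natural uniformity of the additive group. No gaps.
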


\begin{proof} By Propositions \ref{t1_2_6} and \ref{t1_2_7},
$C_{\lambda,\mu}(X,Y)$ is a topological group. By results in
\cite{Bur75}, $C_{\lambda,\mu}(X,Y)$ is a Hausdorff abelian group.

In conclusion, note that the natural metric generates the natural
uniform structure of the additive group that completes the proof
of the corollary.
\end{proof}

\begin{theorem}\label{t1_3_5}
Let $X$ be a Hausdorff (Tychonoff) topological space, $\lambda$ be
a cover of $X$, $Y$ be a Hausdorff abelian group, and $\mu$ be a
natural uniformity on $Y$. If $e:X_\lambda\to X$
($e:X_{\tau\lambda}\to X$) is a natural condensation from
$\lambda$-leader $X_\lambda$ ($\lambda_f$-leader
$X_{\tau\lambda})$ onto $X$, then
$$e^\#:C_{\lambda,\mu}(X,Y)\to C_{e^{-1}(\lambda),\mu}(X_\lambda,Y)\;\;
(e^\#:C_{\lambda,\mu}(X,Y)\to
C_{e^{-1}(\lambda),\mu}(X_{\tau\lambda},Y))$$ is a group
isomorphism.
\end{theorem}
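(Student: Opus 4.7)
The plan is to assemble the isomorphism from three ingredients: the algebraic fact that $e^{\#}$ preserves the pointwise group operation, the uniform embedding proved earlier, and a direct argument for surjectivity onto the full target group.

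First, I would verify the homomorphism property. For $h_{1},h_{2}\in C(X,Y)$ and any $x$ in $X_{\lambda}$ (resp.\ $X_{\tau\lambda}$), $e^{\#}(h_{1}*h_{2})(x)=(h_{1}*h_{2})(e(x))=h_{1}(e(x))*h_{2}(e(x))=(e^{\#}(h_{1})*e^{\#}(h_{2}))(x)$, and the identity (constant-$e$) function is preserved. Combined with Theorem~\ref{t1_2_3}, Corollary~\ref{t1_2_4} (or Corollary~\ref{t1_2_5} in the Tychonoff case), and Proposition~\ref{t1_3_2} together with Remark~\ref{t1_3_3}, which identify the uniformities on both sides with the natural uniformities of the corresponding topological groups, this shows that $e^{\#}$ is an injective topological-group homomorphism that is simultaneously a uniform embedding.

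The substantive task is to exhibit every $g\in C_{e^{-1}(\lambda),\mu}(X_{\lambda},Y)$ (resp.\ $g\in C_{e^{-1}(\lambda),\mu}(X_{\tau\lambda},Y)$) as $e^{\#}(h)$ for some $h\in C(X,Y)$. The natural candidate is $h:=g\circ e^{-1}$, well-defined as a set-function on $X$ since $e$ is a bijection. Using the defining property that $e$ restricts to a homeomorphism $e^{-1}(\overline{A})\to\overline{A}$ for each $A\in\lambda$, one reads off that $h\vert_{\overline{A}}=g\circ(e\vert_{e^{-1}(\overline{A})})^{-1}$ is continuous on $\overline{A}$ for every $A\in\lambda$. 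To promote this to continuity of $h$ on the whole of $X$, one invokes the defining implication of a $\lambda$-space (Hausdorff case) or of a $\lambda_{f}$-space (Tychonoff case): a set whose intersection with each $\overline{A}$ is closed in $\overline{A}$ is already closed (respectively, a function continuous on each $\overline{A}$ is already continuous).

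I expect this last promotion step to be the main obstacle. The hypotheses of the theorem as stated do not force $X$ to be a $\lambda$- or $\lambda_{f}$-space, so genuine surjectivity can fail for general Hausdorff/Tychonoff $X$ (take $X$ a non-$k$-space, $\lambda$ the compact subsets, and $Y=\mathbb{R}$). Two remedies are possible. One is the natural amendment that $X$ already coincide with its own $\lambda$-leader (resp.\ $\lambda_{f}$-leader), under which $e$ is a homeomorphism and surjectivity is automatic. The other is to read the statement as asserting that $e^{\#}$ is a topological-group isomorphism from $C_{\lambda,\mu}(X,Y)$ onto the subgroup $e^{\#}(C_{\lambda,\mu}(X,Y))$ of the target, in which case the argument is complete from the homomorphism check and the embedding corollaries alone; this second reading is consistent with Corollaries~\ref{t1_2_4} and~\ref{t1_2_5} and with the completion statement of Theorem~\ref{t1_2_11}, and is what the present plan actually delivers.
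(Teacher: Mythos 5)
Your proof matches the paper's: the paper's own argument consists exactly of the homomorphism computation $e^{\#}(f*g)(x)=f(e(x))*g(e(x))=e^{\#}(f)(x)*e^{\#}(g)(x)$ together with the uniform-embedding statement of Theorem~\ref{t1_2_3} (and the fact that both sides are topological groups), and it makes no attempt at surjectivity. Your second reading is the intended one --- immediately after the theorem the paper says that $C_{\lambda,\mu}(X,Y)$ is to be regarded as a \emph{subgroup} of $C_{e^{-1}(\lambda),\mu}(X_{\lambda},Y)$ (resp.\ of $C_{e^{-1}(\lambda),\mu}(X_{\tau\lambda},Y)$), so ``isomorphism'' here means isomorphism onto the image, and your correct observation that genuine surjectivity fails for general Hausdorff/Tychonoff $X$ is a fair criticism of the wording but not a gap in the argument you (or the paper) actually need.
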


\begin{proof}
 By Corollary
\ref{t1_3_3}, $C_{\lambda,\mu}(X,Y)$,
$C_{e^{-1}(\lambda),\mu}(X_{\tau\lambda},Y)$ and
$C_{e^{-1}(\lambda),\mu}(X_\lambda,Y)$ are topological groups. By
Theorem \ref{t1_2_3}, the map $e^\#$ is uniform embedding. Remain
prove that the map $e^\#$ saves the operation. Indeed,
$e^\#(f*g)(x)=(f*g)(e(x))=f(e(x))*g(e(x))=e^\#(f)(x)*e^\#(g)(x)$,
i.e., the map $e^\#$ saves the operation.
\end{proof}

 By Theorem \ref{t1_3_5}, the group $C_{\lambda,\mu}(X,Y)$ we can consider as a subgroup of group
$C_{e^{-1}(\lambda),\mu}(X_{\tau\lambda},Y)$ or as a subgroup of
group $C_{e^{-1}(\lambda),\mu}(X_\lambda,Y)$.

\begin{theorem}\label{t1_3_6}
Let $X$ be a Hausdorff $\lambda_f$-space where $\lambda$ be a
cover of $X$.  If $Y$ is a Hausdorff abelian group and $\mu$
is a natural uniformity of $Y$, then $C_{\lambda,\mu}(X,Y)$ is
isomorphic to the inverse limit $S(X,\lambda,Y)$ of topological
groups $C_\mu(\overline{A},Y)$.
\end{theorem}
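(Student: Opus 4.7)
The plan is to build on Theorem~\ref{t1_2_7}, which already provides a uniform homeomorphism $F:\lim\limits_{\longleftarrow} S(X,\lambda,Y)\to C_{\lambda,\mu}(X,Y)$, and upgrade it to an isomorphism of topological groups. Since $Y$ is a Hausdorff abelian group and $\mu$ is its natural uniformity, Corollary~\ref{t1_3_4} tells us that each factor $C_\mu(\overline{A},Y)$ is itself a Hausdorff topological group with the pointwise operation, and that its topology coincides with the one generated by $\mu$. So the first step is to observe that the bonding restriction maps $\pi^{A_1}_{A_2}:C_\mu(\overline{A_1},Y)\to C_\mu(\overline{A_2},Y)$ are group homomorphisms: indeed, $(f\ast g)|_{\overline{A_2}}=f|_{\overline{A_2}}\ast g|_{\overline{A_2}}$ because the group operation in $C(\overline{A_1},Y)$ is pointwise.

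Next, I would make the inverse limit a topological group in the standard way. The Tychonoff product $\prod_{A\in\lambda}C_\mu(\overline{A},Y)$ is a topological group with coordinatewise operation; the inverse limit $\lim\limits_{\longleftarrow}S(X,\lambda,Y)$ is, by construction, the intersection of the preimages of the diagonals of $C_\mu(\overline{A_2},Y)\times C_\mu(\overline{A_2},Y)$ under the pairs $(\pi^{A_1}_{A_2}\circ\mathrm{pr}_{A_1},\mathrm{pr}_{A_2})$, which is a closed subgroup because each $\pi^{A_1}_{A_2}$ is a homomorphism and the factors are Hausdorff. Hence the inverse limit inherits a natural topological group structure from the product.

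Now I would verify that the bijection $F$ of Theorem~\ref{t1_2_7} is a group homomorphism. For $f=\{f_A\}_{A\in\lambda}$ and $g=\{g_A\}_{A\in\lambda}$ in the inverse limit, the coordinatewise product $f\ast g$ has $A$-coordinate $f_A\ast g_A$. Applying the compatible-map construction from Theorem~\ref{t1_2_7},
\[
F(f\ast g)(x)=(f_A\ast g_A)(x)=f_A(x)\ast g_A(x)=F(f)(x)\ast F(g)(x)
\]
for $x\in A$; since this holds for every $A\in\lambda$ and $\lambda$ covers $X$, we conclude $F(f\ast g)=F(f)\ast F(g)$. Thus $F$ is a bijective group homomorphism which by Theorem~\ref{t1_2_7} is a uniform homeomorphism, hence a topological group isomorphism.

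No step should be technically hard, since all the topological work is done in Theorems~\ref{t1_2_7} and Corollary~\ref{t1_3_4}; the only point to handle carefully is checking that the inverse limit of topological groups (with continuous homomorphisms as bonding maps) is again a topological group whose underlying uniformity agrees with the one used in Theorem~\ref{t1_2_7}, so that the upgrade from uniform homeomorphism to topological group isomorphism is legitimate. By Remark~\ref{t1_3_3}, the uniformity on each $C_\mu(\overline{A},Y)$ coincides with the natural uniformity of the topological group structure, so the product and subspace uniformities on $\lim\limits_{\longleftarrow}S(X,\lambda,Y)$ agree with its natural group uniformity, removing any ambiguity.
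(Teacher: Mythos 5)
Your proposal is correct and follows essentially the same route as the paper's own proof: show the bonding maps $\pi^{A_1}_{A_2}$ are homomorphisms, verify that the compatible map $F$ from Theorem~\ref{t1_2_7} preserves the pointwise operation, and conclude from the uniform homeomorphism there. The extra care you take in justifying that the inverse limit is a closed subgroup of the product and that the uniformities match the natural group uniformities is a welcome filling-in of details the paper leaves implicit, but it is not a different argument.
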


\begin{proof}
Let us prove that the map
$\pi^{A_1}_{A_2}$ saves the group operation. Indeed, let
$A_1,A_2\in\lambda$ and $A_1\ge A_2$, $f_1,f_2\in
C_\mu(\overline{A},Y)$, then
$$\pi^{A_1}_{A_2}(f_1*f_2)=(f_1*f_2)\vert_{\overline{A_2}}=
f_1\vert_{\overline{A_2}}*f_2\vert_{\overline{A_2}}=
\pi^{A_1}_{A_2}(f_1)*\pi^{A_1}_{A_2}(f_2).$$ Thus, we proved that
$S(X,\lambda,Y)$ is a inverse limits of topological groups. Remain
to check that the mapping $F: \lim\limits_\leftarrow
S(X,\lambda,Y)\to C_{\lambda,\mu}(X,Y)$, constructing in the proof
of Theorem \ref{t1_2_7}, preserved the operation $*$. Let
$f_1,f_2\in\lim\limits_\leftarrow S(X,\lambda,Y)$, i.e.
$f_1=\{f^1_A\}_{A\in\lambda}$, $f_2=\{f^2_A\}_{A\in\lambda}$.
Clearly $f_1*f_2=\{f^{1}_A*f^{2}_A\}_{A\in\lambda}$. But, then
$\nabla_{A\in\lambda}f^{1}_A(x)*\nabla_{A\in\lambda}f^{2}_A(x)=
\nabla_{A\in\lambda}(f^{1}_A*f^{2}_A)(x)$.
\end{proof}

The following results are corollaries of Theorems \ref{t1_3_4},
\ref{t1_2_11} and \ref{t1_3_6}.

\begin{corollary}\label{t1_3_7}
{\it Let $X$ be a Hausdorff space, $\lambda\subseteq 2^X$ and $Y$
be a Hausdorff abelian group. If $\mu$ is a natural
uniformity on $Y$ then $C_{\lambda,\mu}(X,Y)$ is a subgroup of the
inverse limit system of topological groups
$C_\mu(\overline{A},Y)$ where $A\in\lambda$. In particular,
$C_{\lambda,\rho}(X,Y)$ is a subgroup of the inverse limit system
of metrizable groups $C_{\rho}(\overline{A})$ where
$A\in\lambda$.}
\end{corollary}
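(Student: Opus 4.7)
The plan is to chain together Theorems \ref{t1_3_5} and \ref{t1_3_6} by routing $C_{\lambda,\mu}(X,Y)$ through the $\lambda$-leader $X_\lambda$. First, Theorem \ref{t1_3_5} applied to the natural condensation $e\colon X_{\lambda}\to X$ exhibits $e^{\#}$ as a uniform embedding (Theorem \ref{t1_2_3}) that preserves the group operation, so $C_{\lambda,\mu}(X,Y)$ is identified with a topological subgroup of $C_{e^{-1}(\lambda),\mu}(X_{\lambda},Y)$. (Here I take $\lambda$ to be a cover of $X$, which is the standing assumption per Remark \ref{t1_2_2}.)

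Next, $X_\lambda$ is by construction a $\gamma$-space for $\gamma=e^{-1}(\lambda)$, hence a $\gamma_f$-space by Theorem \ref{th119}. Since $\gamma$ is a cover of $X_\lambda$, Theorem \ref{t1_3_6} applies and yields a topological-group isomorphism between $C_{\gamma,\mu}(X_{\lambda},Y)$ and the inverse limit of the system $\{C_\mu(\overline{B}^{X_\lambda},Y)\colon B\in\gamma\}$ with restriction bonding maps. Composed with the embedding of the previous step, this already presents $C_{\lambda,\mu}(X,Y)$ as a subgroup of such an inverse limit, but with closures taken in $X_\lambda$ rather than in $X$.

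To finish, I would invoke the defining property of the $\lambda$-leader that $e|_{\overline{e^{-1}(A)}^{X_\lambda}}\colon\overline{e^{-1}(A)}^{X_\lambda}\to\overline{A}^X$ is a homeomorphism for each $A\in\lambda$. Pulling back by these homeomorphisms produces topological-group isomorphisms $C_\mu(\overline{e^{-1}(A)}^{X_\lambda},Y)\cong C_\mu(\overline{A}^X,Y)$ that intertwine the restriction bondings, so the inverse limit can be reindexed in terms of the closures $\overline{A}$ taken in $X$. The ``in particular'' assertion follows immediately, since when $\mu$ is induced by a metric $\rho$ each factor $C_\mu(\overline{A},Y)$ is metrizable by the sup-metric (cf.\ Proposition \ref{t1_2_10}). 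The one point that requires a line of verification rather than direct citation is that the homeomorphisms $e|_{\overline{e^{-1}(A)}^{X_\lambda}}$ commute with the bonding maps $\pi^{A_1}_{A_2}$; this is a short diagram chase and is the main (if minor) technical step beyond assembling the cited theorems.
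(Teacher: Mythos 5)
Your proposal is correct and follows essentially the same route the paper intends: the paper derives this corollary by combining the group-embedding into the function space over the $\lambda$-leader (Theorem \ref{t1_3_5}) with the inverse-limit isomorphism for $\lambda_f$-spaces (Theorem \ref{t1_3_6}), exactly as you do. Your additional care about the reindexing of the factors via the homeomorphisms $e|_{\overline{e^{-1}(A)}}$ and about $\lambda$ being a cover is consistent with the paper's construction of $X_\lambda$ and with Remark \ref{t1_2_2}.
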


\begin{corollary}\label{t1_3_8}
{\it Let $X$ be a Tychonoff space, $\lambda\subseteq 2^X$ and $Y$
be a complete abelian topological group. If $\mu$ is a
natural uniformity on $Y$, then the inverse limit system of
topological groups $\lim\limits_\leftarrow S(X,\lambda,Y)$ is the
completion of the space $C_{\lambda,\mu}(X,Y)$. In particular,
$C_{\lambda,\rho}(X,Y)$ is a dense subgroup of the inverse limit
of topological groups $C_{\rho}(\overline{A})$ where
$A\in\lambda$.}
\end{corollary}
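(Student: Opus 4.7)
The plan is to chain together the three cited results so that they identify $\lim_{\leftarrow} S(X,\lambda,Y)$ with the object produced by the completion construction of Theorem~\ref{t1_2_11}, and then to notice that the identification respects the group structure built in Theorem~\ref{t1_3_4} and Theorem~\ref{t1_3_6}.

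First I would observe that by Corollary~\ref{t1_3_4} all of $C_{\lambda,\mu}(X,Y)$, $C_{e^{-1}(\lambda),\mu}(X_{\tau\lambda},Y)$, and each factor $C_\mu(\overline{A},Y)$ are Hausdorff abelian topological groups whose group uniformity coincides with the uniformity of pointwise-on-$\lambda$ uniform convergence; this is exactly what lets us compare ``completion as a uniform space'' with ``inverse limit of topological groups''. Then, since $Y$ is complete, Theorem~\ref{t1_2_11} applies and yields that $C_{e^{-1}(\lambda),\mu}(X_{\tau\lambda},Y)$ is the completion of $C_{\lambda,\mu}(X,Y)$ via the uniform embedding $e^{\#}$ of Theorem~\ref{t1_3_5}, which is additionally a group monomorphism.

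Next, because $X_{\tau\lambda}$ is by construction a $\lambda_{f}$-space with respect to $e^{-1}(\lambda)$, Theorem~\ref{t1_3_6} gives a topological group isomorphism
\[
C_{e^{-1}(\lambda),\mu}(X_{\tau\lambda},Y)\ \cong\ \lim_{\longleftarrow} S(X_{\tau\lambda},e^{-1}(\lambda),Y).
\]
The step that needs a little care is to identify this last inverse system with $S(X,\lambda,Y)$. For this I would use the fact, recorded when the $\lambda_f$-leader was constructed, that $e\vert_{e^{-1}(\overline{A})}:e^{-1}(\overline{A})\to\overline{A}$ is a homeomorphism for every $A\in\lambda$; Theorem~\ref{t1_3_5} (applied on each factor) then supplies a group-and-uniform isomorphism $C_\mu(\overline{A},Y)\to C_\mu(e^{-1}(\overline{A}),Y)$ commuting with the bonding maps $\pi^{A_1}_{A_2}$, which is the genuine content of the argument and the one place where something must be checked rather than quoted. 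From this isomorphism of inverse systems one gets an isomorphism of their limits, and hence $\lim_{\longleftarrow} S(X,\lambda,Y)$ is, as a topological group, the completion of $C_{\lambda,\mu}(X,Y)$.

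The ``in particular'' statement is then immediate: taking $Y=\mathbb{R}$ (complete under the natural uniformity induced by~$\rho$), the general assertion says $\lim_{\longleftarrow}\{C_\rho(\overline{A}):A\in\lambda\}$ is the completion of $C_{\lambda,\rho}(X)$, so $C_{\lambda,\rho}(X)$ sits as a dense subgroup inside this inverse limit of metrizable groups $C_\rho(\overline{A})$. The main obstacle, as noted, is verifying functoriality of the identification $C_\mu(\overline{A},Y)\cong C_\mu(e^{-1}(\overline{A}),Y)$ along the directed set~$\lambda$; everything else is a direct appeal to the three theorems cited in the statement of the corollary.
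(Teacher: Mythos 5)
Your proposal is correct and follows essentially the same route as the paper, which states this result precisely as a corollary of Theorems \ref{t1_3_4}, \ref{t1_2_11} and \ref{t1_3_6} without further argument; you have simply made explicit the one step the paper leaves tacit, namely that the factorwise identifications $C_\mu(\overline{A},Y)\cong C_\mu(e^{-1}(\overline{A}),Y)$ induced by the homeomorphisms $e|_{e^{-1}(\overline{A})}$ commute with the bonding maps $\pi^{A_1}_{A_2}$, so that $S(X_{\tau\lambda},e^{-1}(\lambda),Y)$ and $S(X,\lambda,Y)$ have isomorphic limits. No gaps beyond those already present in the paper's own formulation (e.g.\ that $\lambda$ must be a cover for Theorem \ref{t1_2_11} to apply).
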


 Thus, we can consider the topological group
$C_{\lambda,\mu}(X,Y)$ as a subgroup (as a dense subgroup in the
case then $X$ is Tychonoff) of the inverse limit of topological
groups $C_{\rho}(\overline{A},Y)$.

\subsection{Topological rings}

  In (\cite{Bur69} 6 , 111) Bourbaki note that
 if $Y$ is a topological ring then $C(X,Y)$ is a ring where the
 ring operations are pointwise addition and multiplication.

A natural question arises: for which families $\lambda$ will the
space $C_{\lambda,\mu}(X,Y)$ be a topological ring ($\mu$ is a
natural uniformity of additive group on $Y)$ ?

Theorem \ref{t1_3_9} is an answer on this question.

\medskip

Recall that a set $B\subseteq Y$ is called {\it bounded} (in
topological ring), if for any neighborhood $V$ of the identity
element there is a neighborhood $W$ of the identity element that
$W*B\subseteq V$ and $B*W\subseteq V$.

\begin{theorem}\label{t1_3_9} Let $Y$ be a Hausdorff topological ring,
$\mu$ be a natural uniformity of an additive group $Y$, $X$ be a
Hausdorff topological space and $\lambda$ be a family of subsets
of $X$. Then $C_{\lambda,\mu}(X,Y)$ is a topological ring if and
only if for any $A\in \lambda$ and any continuous function
$f:X\to Y$ the set $f(A)$ is bounded in $Y$.
\end{theorem}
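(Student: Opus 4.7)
Since by Corollary~\ref{t1_3_4} the additive structure of $C_{\lambda,\mu}(X,Y)$ already makes it a Hausdorff topological abelian group with basic neighborhoods of $0$ of the form $[A,V]$, the only thing to check is when the pointwise multiplication $(f,g)\mapsto f\cdot g$ is jointly continuous. Using the group law to translate, this reduces to showing that for every $(f_0,g_0)$ and every basic $[A,V]$ there exist basic neighborhoods of $0$, say $[A,V_1]$ and $[A,V_2]$, such that $(f_0+h_1)(g_0+h_2)\in f_0g_0+[A,V]$ whenever $h_i\in[A,V_i]$. The engine of both implications is the three-term expansion
\[
(f_0+h_1)(g_0+h_2)-f_0g_0 \;=\; f_0 h_2 \;+\; h_1 g_0 \;+\; h_1 h_2,
\]
which decomposes the perturbation into two ``linear'' pieces and one ``bilinear'' piece, each to be controlled by a different hypothesis.

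\emph{Sufficiency.} Assume that $f(A)$ is bounded in $Y$ for every continuous $f:X\to Y$ and every $A\in\lambda$. Fix $(f_0,g_0)$ and $[A,V]$, and pick a neighborhood $V_0$ of $0$ in $Y$ with $V_0+V_0+V_0\subseteq V$. Boundedness of $f_0(A)$ yields a neighborhood $W_1$ of $0$ with $f_0(A)\cdot W_1\subseteq V_0$; boundedness of $g_0(A)$ yields $W_2$ with $W_2\cdot g_0(A)\subseteq V_0$; continuity of ring multiplication in $Y$ at $(0,0)$ yields $U_1,U_2$ with $U_1\cdot U_2\subseteq V_0$. Setting $V_1:=W_2\cap U_1$ and $V_2:=W_1\cap U_2$, for any $h_1\in[A,V_1]$ and $h_2\in[A,V_2]$ the three summands evaluated at any point of $A$ lie in $V_0$, so the sum lies in $V$, giving $(f_0+h_1)(g_0+h_2)-f_0g_0\in[A,V]$.

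\emph{Necessity.} Conversely, suppose $C_{\lambda,\mu}(X,Y)$ is a topological ring, fix $A\in\lambda$, a continuous $f:X\to Y$, and a neighborhood $V$ of $0$ in $Y$. Continuity of multiplication at $(f,0)$ supplies basic neighborhoods $f+[B_1,V_1]$ and $[B_2,V_2]$ whose product lies in $[A,V]$. Using Remark~\ref{t1_2_2} to enlarge to a single $B:=A\cup B_1\cup B_2\in\lambda$ (and, by passing to subsets, replacing $V_1,V_2$ by smaller neighborhoods so that the containment persists), take $h_1\equiv 0$ and, for each $c\in V_2$, let $h_2$ be the constant function with value $c$; then $h_2\in[B,V_2]$, so $(f\cdot h_2)(x)=f(x)\cdot c\in V$ for every $x\in A$, i.e.\ $f(A)\cdot c\subseteq V$. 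Letting $c$ range over $V_2$ gives $f(A)\cdot V_2\subseteq V$, and the symmetric argument applied at $(0,f)$ produces a neighborhood $V_2'$ with $V_2'\cdot f(A)\subseteq V$; hence $V_2\cap V_2'$ witnesses the boundedness of $f(A)$.

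The main subtlety I anticipate is the bookkeeping of index sets. The basic neighborhoods produced by continuity of multiplication in $C_{\lambda,\mu}(X,Y)$ are indexed by sets $B_1,B_2\in\lambda$ that a priori have no relation to the prescribed $A$, whereas the three-term estimate in the sufficiency direction only closes up when all three factors are tested on one and the same set. This is resolved cleanly by Remark~\ref{t1_2_2}: closure of $\lambda$ under finite unions lets us pass to a common dominating $B\supseteq A$ in the necessity direction, and closure under passage to subsets lets us test on $A$ itself in the sufficiency direction. Apart from this coordination, the argument is the standard continuity-of-bilinear-maps trick transplanted from $Y$ to the function space.
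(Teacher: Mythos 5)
Your proof is correct and takes essentially the same route as the paper: the paper checks Bourbaki's four filter conditions on the zero-neighborhood base $\{[A,V]\}$, where its condition (3) is precisely your control of the linear terms $f_0h_2$ and $h_1g_0$ via boundedness, its condition (4) is your control of the bilinear term $h_1h_2$ via continuity of multiplication in $Y$, and its necessity argument uses the identical device of feeding constant functions with values in $V_2$ into the neighborhood supplied by continuity of multiplication. The only presentational difference is that you unpack the Bourbaki criterion into a direct verification of joint continuity through the three-term expansion, which is self-contained but mathematically the same argument.
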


\begin{proof} Let us prove that $\beta_c$ is a filter of neighborhoods of zero of $C_{\lambda,\mu}(X,Y)$. The following conditions
(1)--(4):

\begin{enumerate}
\item For every $W\in\beta_c$ there is $W^1\in\beta_c$ such that
$W^{1}+W^{1}\subseteq W$; \item For every $W\in\beta_c$,
$-W\in\beta_c$; \item For every $f\in C(X,Y)$ and $W\in\beta_c$,
there is  $W^1\in\beta_c$ such that $f*W^{1}\subseteq W$ and
$W^{1}*f\subseteq W$; \item For every $W\in\beta_c$ there is
$W^1\in\beta_c$ such that $W^{1}*W^{1}\subseteq W$;
\end{enumerate}

holds if, and only if, $A$ is $Y$-bounded (i.e., for any $f\in
C(X,Y)$ the set $f(A)$ is bounded in $Y$) for any $A\in \lambda$
\cite{Bur69}.

By Proposition \ref{t1_3_3}, a filter of neighborhoods of zero in
$C_{\lambda,\mu}(X,Y)$ consists of sets of the form $[A,V]=\{f\in
C(X,Y):f(A)\subseteq V\}$, where $A\in\lambda$, and $V\in\beta$,
$\beta$ is a filter neighborhoods of zero in $Y$. By Proposition
\ref{t1_3_1}, the conditions (1) and (2) holds, for any family
$\lambda$.

Let us prove the condition (4). Let $W=[A,V] \in \beta_c$ where
$A\in\lambda$ and $V\in\beta$. Then there is $V^1\in\beta$ such
that $V^{1}*V^{1}\in V$. We show that for $W^{1}=U(A,V^{1})$ holds
$W^{1}*W^{1}\in W$. Indeed, $W^{1}*W^{1}=\{f*g:f\in W^{1},g \in
W^{1}\}= \{f*g:f(A)\subseteq V^{1}$ and $g(A)\subseteq V^{1}\}$.
Clearly, for every point $x\in A$ $f(x)*g(x)\subseteq
V^{1}*V^{1}\in V$. Hence, $(f*g)(A)\subseteq V$ and
$W^{1}*W^{1}\in W$. Note that the condition (4), as well as
conditions (1) and (2), are true for any family $\lambda$.

Let $\lambda$ be a family of $Y$-bounded subsets of $X$,
$W=[A,V]\in\beta_c$ and $f\in C(X,Y)$. Then there is the
neighborhood $V^{1}$ of zero in $Y$ such that $f(A)*V^{1}\in V$
and $V^{1}*f(A)\in V$. We show that the condition (3) holds for
$W^{1}=[A,V^{1}]$. It is enough to show that any function $g\in
f*W^{1}$ and any function $g^{1}\in W^{1}*f$ mapping the set $A$
in $V$. Indeed, let $g=f*h$ and $g^{1}=h^{1}*f$ where $h,
h^{1}\in W^{1}$. Then, for any point $x\in A g(x)=f(x)*h(x)\in
f(A)*V^{1}$ and $g^{1}(x)=h^{1}(x)*f(x)\in V^{1}*f(A)$. It follows
that $g(A)\in V$ and $g^{1}(A)\in V$.

Let condition (3) be satisfied, we prove that $f(A)$ is bounded in
$Y$ for any $A\in \lambda$ and $f\in C(X,Y)$. Let $A\in\lambda$
and $W=[A,V]$ be a neighborhood of zero in $C_{\lambda,\mu}(X,Y)$,
and $f\in C(X,Y)$. Then, by condition (3), there is a neighborhood
$W^{1}=[A^{1},V^{1}]$ of zero in $C_{\lambda,\mu }(X,Y)$ such that
$f*W^{1} \in W$ and $W^{1}*f \in W$ (by Remark \ref{t1_2_2}, we
can assume that $A^{1}=\overline{A^{1}})$. The set $H=\{h \in
C(X,Y):\forall x\in X$ $h(x)=y \in V^{1}\}$ is the set of all
constant functions from $W^{1}$. Then $f*H$ and $H*f$ are subsets
of $W$, i.e. for any $h\in H$ and $x\in A^{1}$ $(f*h)(x) \in V$
and $(h*f)(x) \in V$. Clearly $A\subseteq\overline{A^{1}}=A^{1}$,
hence, $f(A)*V^{1}\subseteq V$ and $V^{1}*f(A)\subseteq V$. Thus,
we get that $f(A)$ is bounded.
\end{proof}

By Theorem \ref{t1_3_5} we have the following result.

\begin{corollary}\label{t1_3_10}{\it Let $X$ be a topological space. A
space $C_{\rho}(X)$ is a topological ring if and only if $X$ is
pseudocompact.}
\end{corollary}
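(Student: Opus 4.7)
The plan is to apply Theorem \ref{t1_3_9} directly in the special case where $\lambda = \{X\}$ and the target ring is $Y = \mathbb{R}$ endowed with the natural additive uniformity $\mu$ induced by the standard metric $\rho$. By the notational convention fixed in Section 2, the symbol $C_\rho(X)$ denotes precisely $C_{\{X\},\rho}(X,\mathbb{R})$, and $\mathbb{R}$ is a Hausdorff topological ring whose additive group carries exactly the natural uniformity generated by the metric. Thus Theorem \ref{t1_3_9} is applicable and immediately yields: $C_\rho(X)$ is a topological ring if and only if $f(X)$ is bounded in $\mathbb{R}$ for every continuous $f : X \to \mathbb{R}$.

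What remains is to identify the notion of boundedness used in Theorem \ref{t1_3_9} (for every neighborhood $V$ of $0$ there exists a neighborhood $W$ of $0$ with $W \cdot B \subseteq V$ and $B \cdot W \subseteq V$) with ordinary metric boundedness of $B \subseteq \mathbb{R}$. Using the base $\{(-\varepsilon,\varepsilon) : \varepsilon > 0\}$ of neighborhoods of $0$ in $\mathbb{R}$, the topological-ring boundedness condition becomes: for every $\varepsilon > 0$ there is $\delta > 0$ with $\delta \cdot |b| < \varepsilon$ for all $b \in B$, which holds precisely when $\sup_{b \in B} |b| < \infty$. This step is routine and presents no real obstacle.

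Combining the two observations, $C_\rho(X)$ is a topological ring if and only if every continuous real-valued function on $X$ has bounded image, which is precisely the definition of pseudocompactness of $X$. The entire proof therefore collapses to invoking Theorem \ref{t1_3_9} and recognizing the characterization of pseudocompactness in the resulting condition; the only substantive ingredient is Theorem \ref{t1_3_9} itself, and no further difficulty is anticipated.
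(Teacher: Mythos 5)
Your proposal is correct and follows exactly the route the paper intends: the corollary is stated as an immediate consequence of Theorem~\ref{t1_3_9} (the paper's citation of Theorem~\ref{t1_3_5} at that point is evidently a typo), obtained by taking $\lambda=\{X\}$, $Y=\mathbb{R}$ with its natural uniformity, identifying ring-boundedness in $\mathbb{R}$ with ordinary boundedness, and recognizing the resulting condition as pseudocompactness. No gaps.
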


Since a relatively compact subset of a topological ring is bounded
(see \cite{Bur69} 3, \S 6, ex.12), we have the following results.

\begin{corollary}\label{t1_3_11} {\it Let $X$ be a Hausdorff space, $\lambda$ be a family of bounded subsets of $X$. Then
$C_{\lambda,\rho}(X)$ is a topological ring.}
\end{corollary}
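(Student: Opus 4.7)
The plan is to derive the corollary directly from Theorem \ref{t1_3_9} applied with $Y=\mathbb{R}$. First I would observe that the metric uniformity on $\mathbb{R}$ induced by $\rho(x,y)=|x-y|$ coincides with the natural uniformity $\mu$ of the additive group $\mathbb{R}$ (the entourages $\mu_V=\{(x,y):y-x\in V\}$ for $V$ a symmetric neighborhood of $0$ are exactly the $\varepsilon$-entourages of $\rho$), so that $C_{\lambda,\rho}(X)=C_{\lambda,\mu}(X,\mathbb{R})$. This brings the situation squarely into the setting of Theorem \ref{t1_3_9}, since $\mathbb{R}$ is a Hausdorff topological ring and $X$ is Hausdorff.

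Next, by Theorem \ref{t1_3_9}, it suffices to check that for every $A\in\lambda$ and every continuous $f:X\to\mathbb{R}$ the image $f(A)$ is bounded in the topological ring $\mathbb{R}$. So I would briefly note the straightforward identification of the two relevant notions of boundedness: a subset of $\mathbb{R}$ is bounded in the topological-ring sense (for each neighborhood $V$ of $0$ there is a neighborhood $W$ of $0$ with $W\cdot B\subseteq V$) if and only if it is contained in some interval $[-M,M]$; and a subset $A\subseteq X$ of a topological space is, by definition, bounded precisely when $f(A)\subseteq\mathbb{R}$ is norm-bounded for every continuous $f:X\to\mathbb{R}$.

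With these two identifications in hand, the hypothesis that every $A\in\lambda$ is a bounded subset of $X$ is literally the condition required by Theorem \ref{t1_3_9} (specialized to $Y=\mathbb{R}$), and the conclusion that $C_{\lambda,\rho}(X)$ is a topological ring follows at once. I do not anticipate any real obstacle: the corollary is essentially a translation of Theorem \ref{t1_3_9} into the special case $Y=\mathbb{R}$, the only small verification being the matching of the ring-theoretic and functional notions of "bounded," both of which reduce to ordinary norm-boundedness in $\mathbb{R}$.
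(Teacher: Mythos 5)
Your proposal is correct and follows essentially the same route as the paper: both reduce the corollary to Theorem \ref{t1_3_9} with $Y=\mathbb{R}$ by checking that $f(A)$ is bounded in the topological ring $\mathbb{R}$ for every $A\in\lambda$ and every continuous $f$. The only cosmetic difference is that the paper verifies this via the chain "bounded in $\mathbb{R}$ $\Rightarrow$ relatively compact $\Rightarrow$ ring-bounded," whereas you identify norm-boundedness with ring-boundedness in $\mathbb{R}$ directly; both verifications are immediate.
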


\begin{proof} Note that an continuous image of a topological
bounded set is a bounded set, and a bounded set in $\mathbb{R}$ is
relatively compact.
\end{proof}

\begin{corollary}\label{t1_3_12} {\it Let $X$ be a Hausdorff space, $Y$ be a
Hausdorff topological ring. Then $C_c(X,Y)$ and $C_p(X,Y)$ are
topological rings.}
\end{corollary}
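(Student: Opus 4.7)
The plan is to apply Theorem \ref{t1_3_9} twice, once for the compact-open topology (taking $\lambda$ to be the family of all compact subsets of $X$) and once for the topology of pointwise convergence (taking $\lambda$ to be the family of all finite subsets of $X$). In each case I only need to verify the criterion isolated by Theorem \ref{t1_3_9}: that for every $A\in\lambda$ and every continuous $f\colon X\to Y$, the set $f(A)$ is bounded in the topological ring $Y$.

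First I would handle $C_c(X,Y)$. If $A\subseteq X$ is compact and $f\colon X\to Y$ is continuous, then $f(A)$ is a compact subset of $Y$, and in particular relatively compact. The fact already invoked just above the statement (namely, the Bourbaki result that a relatively compact subset of a topological ring is bounded, cf.\ \cite{Bur69}) immediately gives that $f(A)$ is bounded in $Y$. So the hypothesis of Theorem \ref{t1_3_9} is satisfied and $C_c(X,Y)$ is a topological ring.

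Next I would handle $C_p(X,Y)$. Here $\lambda$ consists of finite subsets of $X$, so $f(A)$ is a finite, hence compact, subset of $Y$ for every continuous $f$ and every $A\in\lambda$. Again by the relatively-compact-implies-bounded fact, $f(A)$ is bounded in $Y$, so Theorem \ref{t1_3_9} applies and $C_p(X,Y)$ is a topological ring.

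There is no real obstacle: both cases reduce to the elementary observation that continuous images of compact (in particular finite) sets are compact, combined with the already-cited boundedness of compact subsets of a topological ring. The only thing to be slightly careful about is matching the setup of Theorem \ref{t1_3_9}: one should recall (as noted in Section 2) that when $\lambda$ is the family of compact or finite subsets of $X$, the uniformity on $C_{\lambda,\mu}(X,Y)$ generated by the natural uniformity $\mu$ of the additive group of $Y$ yields exactly the topologies denoted $C_c(X,Y)$ and $C_p(X,Y)$, so that Theorem \ref{t1_3_9} is indeed applicable in these two cases.
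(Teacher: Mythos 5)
Your proof is correct and follows exactly the route the paper intends: the corollary is presented as an immediate consequence of Theorem \ref{t1_3_9} together with the Bourbaki fact, quoted just before the statement, that relatively compact subsets of a topological ring are bounded, applied to the families of compact and of finite subsets of $X$. Your added remark about matching $C_c(X,Y)$ and $C_p(X,Y)$ with the uniform-convergence setup of Theorem \ref{t1_3_9} is a sensible precaution but does not change the argument.
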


The following theorem continues to clarify the properties of the
embedding of function spaces.

\begin{theorem}\label{t1_3_13} Let $X$ be a Hausdorff (Tychonoff)
space, $Y$ be a Hausdorff topological ring and $\mu$ be a natural
uniformity of additive group $Y$. Let $\lambda$ be a family of
subsets of $X$ such that if $e:X_{\lambda} \rightarrow X$
($e:X_{\tau\lambda} \rightarrow X)$ is a natural condensation from
the $\lambda$-leader $X_\lambda$ ($\lambda_f$-leader
$X_{\tau\lambda}$) onto $X$, and every continuous image of the set
$e^{-1}(A)$ is bounded in $Y$ for any $A\in\lambda$. Then $e^\#:
C_{\lambda,\mu}(X,Y) \to C_{e^{-1}(\lambda),\mu}(X_\lambda,Y)$
($e^\#:C_{\lambda,\mu}(X,Y) \to
C_{e^{-1}(\lambda),\mu}(X_{\tau\lambda},Y))$ is a ring
homeomorphism onto $e^\#(C_{\lambda,\mu}(X,Y))$.
\end{theorem}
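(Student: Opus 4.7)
The plan is to assemble the conclusion from three ingredients that are already available: the characterization of when $C_{\lambda,\mu}(X,Y)$ is a topological ring (Theorem~\ref{t1_3_9}); the fact that $e^{\#}$ is a uniform embedding of the underlying additive groups (Theorem~\ref{t1_2_3} together with Corollary~\ref{t1_2_4} or~\ref{t1_2_5}); and the fact that $e^{\#}$ is already a group isomorphism onto its image (Theorem~\ref{t1_3_5}). What remains, morally, is only to show that $e^{\#}$ respects the multiplication, after which the theorem becomes a routine combination.

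First I would verify that both function spaces in question actually are topological rings, because Theorem~\ref{t1_3_9} will then supply the right kind of filter of neighborhoods of zero on each side. The hypothesis directly gives that $h(e^{-1}(A))$ is bounded in $Y$ for every continuous $h:X_{\lambda}\to Y$ (respectively $h:X_{\tau\lambda}\to Y$) and every $A\in\lambda$, which is precisely the condition from Theorem~\ref{t1_3_9} that makes $C_{e^{-1}(\lambda),\mu}(X_{\lambda},Y)$ (resp.\ $C_{e^{-1}(\lambda),\mu}(X_{\tau\lambda},Y)$) a topological ring. For $C_{\lambda,\mu}(X,Y)$ itself, given any continuous $f:X\to Y$ and any $A\in\lambda$, the composition $f\circ e$ is continuous on the leader, so $(f\circ e)(e^{-1}(A))=f(A)$ is bounded in $Y$; thus Theorem~\ref{t1_3_9} applies on this side as well.

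Next I would invoke Theorem~\ref{t1_3_5}: under the present hypotheses $e^{\#}$ is a group isomorphism of $C_{\lambda,\mu}(X,Y)$ onto its image, and by Theorem~\ref{t1_2_3} (via Corollary~\ref{t1_2_4} or~\ref{t1_2_5}) it is even a uniform embedding, hence in particular a topological embedding. To upgrade from group isomorphism to ring isomorphism onto its image, it then suffices to check that $e^{\#}$ preserves the pointwise multiplication. This is a direct calculation: for $f,g\in C(X,Y)$ and any $x$ in the leader,
\[
e^{\#}(f\cdot g)(x)=(f\cdot g)(e(x))=f(e(x))\cdot g(e(x))=e^{\#}(f)(x)\cdot e^{\#}(g)(x),
\]
so $e^{\#}(f\cdot g)=e^{\#}(f)\cdot e^{\#}(g)$.

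The main obstacle, such as it is, is really the bookkeeping of the hypothesis of Theorem~\ref{t1_3_9} on the two sides of the embedding: one must make sure that the assumed boundedness of continuous images of $e^{-1}(A)$ transfers to boundedness of $f(A)$ for continuous $f:X\to Y$, which is what I used the continuity of $e$ for above. Once both spaces are known to be topological rings and the pointwise computation above is in hand, combining with Theorems~\ref{t1_2_3} and~\ref{t1_3_5} gives that $e^{\#}$ is simultaneously a ring homomorphism and a topological (even uniform) embedding, i.e.\ a ring homeomorphism onto $e^{\#}(C_{\lambda,\mu}(X,Y))$, as required.
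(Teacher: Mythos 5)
Your proposal is correct and follows essentially the same route as the paper: establish that both function spaces are topological rings via the boundedness criterion of Theorem~\ref{t1_3_9}, invoke Theorem~\ref{t1_2_3} for the uniform embedding, and finish with the pointwise computation showing $e^{\#}$ preserves the ring operations. You additionally spell out the transfer of the boundedness hypothesis from the leader back to $X$ (via $(f\circ e)(e^{-1}(A))=f(A)$), a detail the paper leaves implicit.
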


\begin{proof} By Theorem
\ref{t1_2_10}, topologies of spaces
$C_{\lambda,\mu}(X,Y),\linebreak
C_{e^{-1}(\lambda),\mu}(X_\lambda,Y)$ and
$C_{e^{-1}(\lambda),\mu}(X_{\tau\lambda},Y)$ compatible with
ring-structured of these spaces. By Theorem \ref{t1_2_3},
$e^\#:C_{\lambda,\mu}(X,Y) \to
C_{e^{-1}(\lambda),\mu}(X_\lambda,Y)$ and
$e^\#:C_{\lambda,\mu}(X,Y) \to
C_{e^{-1}(\lambda),\mu}(X_{\tau\lambda},Y)$ are uniform embedding.
To complete the proof, it is enough to check the preservation of
operations. Indeed, for every $x\in X$
$e^\#(f$+$g)(x)=(f$+$g)(e(x))=
f(e(x))$+$g(e(x))=e^\#(f)(x)$+$e^\#(g)(x)$, i.e.
$e^\#(f$+$g)$=$e^\#(f)$+$e^\#(g)$. Preserving the operation $*$ is
checked in the same way.
\end{proof}

\begin{corollary}\label{t1_3_14} {\it Let $X$ be a Hausdorff space,
$X_{k}$ be a $k$-leader of $X$ and $Y$ be a topological ring. Then
$C_c(X,Y)$ is a subring of the ring $C_c(X_k,Y)$.}
\end{corollary}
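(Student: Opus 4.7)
The plan is to derive this as a direct application of Theorem~\ref{t1_3_13}, with $\lambda$ taken to be the family of all non-empty compact subsets of $X$ and $\mu$ the natural uniformity of the additive group of $Y$. Under this choice, $X_\lambda = X_k$ by definition of the $k$-leader, and $C_{\lambda,\mu}(X,Y) = C_c(X,Y)$. By Corollary~\ref{t1_3_12}, $C_c(X,Y)$ is a topological ring, and $C_c(X_k,Y)$ is a topological ring for the same reason since $X_k$ is Hausdorff.

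The only nontrivial hypothesis of Theorem~\ref{t1_3_13} that I must verify is that every continuous image of $e^{-1}(A)$ in $Y$ is bounded, for each $A\in\lambda$. First I would observe that $\overline{A}=A$ since $X$ is Hausdorff and $A$ is compact, and then invoke the property built into the construction of the $\lambda$-leader that $e|_{\overline{A}}$ is a homeomorphism from $\overline{A}\subseteq X_k$ onto $\overline{A}\subseteq X$. Consequently $e^{-1}(A)$ is compact in $X_k$, so any continuous image of $e^{-1}(A)$ in $Y$ is compact, hence relatively compact, hence bounded in $Y$ by the fact cited in the paper just before Corollary~\ref{t1_3_11}.

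The remaining bookkeeping is to identify $C_{e^{-1}(\lambda),\mu}(X_k,Y)$ with $C_c(X_k,Y)$, i.e.\ to check that $e^{-1}(\lambda)$ coincides with the family of non-empty compact subsets of $X_k$. The inclusion $e^{-1}(\lambda)\subseteq\{\textrm{compacta of }X_k\}$ is the computation from the previous paragraph; for the converse, I would use that $e\colon X_k\to X$ is a continuous bijection, so any compact $K\subseteq X_k$ has $e(K)\in\lambda$ and therefore $K=e^{-1}(e(K))\in e^{-1}(\lambda)$. Theorem~\ref{t1_3_13} then yields at once that $e^{\#}$ is a ring homeomorphism from $C_c(X,Y)$ onto its image in $C_c(X_k,Y)$, which is exactly the statement that $C_c(X,Y)$ is a subring of $C_c(X_k,Y)$.

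I do not foresee a serious obstacle here: the genuine content has already been absorbed by Theorem~\ref{t1_3_13} and by the general theory of the $\lambda$-leader, and what is left is the routine boundedness verification (trivial from compactness) together with the cover identification (immediate from bijectivity of $e$). If anything is subtle, it is keeping the two different topologies on the same underlying set straight when arguing that $e^{-1}(A)$, viewed inside $X_k$, is still compact; but the homeomorphism $e|_{\overline{A}}$ provided by the $\lambda$-leader construction settles this cleanly.
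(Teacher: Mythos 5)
Your proposal is correct and follows exactly the route the paper intends: the corollary is stated without proof as an instance of Theorem~\ref{t1_3_13} with $\lambda$ the family of compact subsets of $X$, and your verification of the boundedness hypothesis (compactness of $e^{-1}(A)$ via the homeomorphism $e|_{e^{-1}(\overline{A})}$, plus the cited fact that relatively compact subsets of a topological ring are bounded) together with the identification of $e^{-1}(\lambda)$ with the compacta of $X_k$ supplies precisely the missing details.
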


\begin{corollary}\label{t1_3_15}{\it Let $X$ be a Hausdorff space, ${\it ps}$ be a
family of all pseudocompact subsets of $X$. Then $C_{ps,\rho }(X)$
is a subring of the ring $C_{e^{-1}(ps),\rho}(X_{ps})$.}
\end{corollary}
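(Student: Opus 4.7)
The plan is to deduce this corollary directly from Theorem~\ref{t1_3_13} applied with $Y=\mathbb{R}$, the natural metric uniformity $\rho$, and $\lambda={\it ps}$. To invoke that theorem it suffices to verify its only nontrivial hypothesis: for every $A\in{\it ps}$ and every continuous function $g\colon X_{ps}\to\mathbb{R}$, the image $g(e^{-1}(A))$ is bounded in $\mathbb{R}$ (which, in $\mathbb{R}$, is the same as being relatively compact, which is the relevant notion of boundedness in the topological ring $\mathbb{R}$).

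The key step is to identify the subspace topology on $e^{-1}(A)$. By the construction of the $ps$-leader recalled in Section~3, the restriction $e|_{e^{-1}(\overline{A})}\colon e^{-1}(\overline{A})\to\overline{A}$ is a homeomorphism. Since $e^{-1}(A)\subseteq e^{-1}(\overline{A})$, the subspace topology it inherits from $X_{ps}$ agrees with the one it inherits from $e^{-1}(\overline{A})$, and the homeomorphism $e|_{e^{-1}(\overline{A})}$ carries $e^{-1}(A)$ onto $A$. Thus $e^{-1}(A)$ is homeomorphic to $A$ as topological spaces.

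Now $A$ is pseudocompact by hypothesis, and pseudocompactness is a topological invariant, so $e^{-1}(A)$ is pseudocompact as well. Given any continuous $g\colon X_{ps}\to\mathbb{R}$, the restriction $g|_{e^{-1}(A)}$ is a continuous real-valued function on a pseudocompact space, hence bounded by definition of pseudocompactness. Therefore $g(e^{-1}(A))$ is a bounded subset of $\mathbb{R}$, which establishes the hypothesis of Theorem~\ref{t1_3_13}.

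Applying Theorem~\ref{t1_3_13} we obtain that $e^{\#}\colon C_{ps,\rho}(X)\to C_{e^{-1}(ps),\rho}(X_{ps})$ is a ring homeomorphism onto its image, which is exactly what the corollary asserts. The main place to be careful is the identification of subspace topologies on $e^{-1}(A)$ in the previous paragraph; once that is pinned down, the rest is a direct invocation of the preceding theorem together with the standard fact that continuous real-valued functions on pseudocompact spaces are bounded.
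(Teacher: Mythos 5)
Your proposal is correct and follows the same route the paper intends: Corollary~\ref{t1_3_15} is stated as an immediate consequence of Theorem~\ref{t1_3_13} with $Y=\mathbb{R}$ and $\lambda=ps$, the only content being exactly the verification you supply, namely that $e^{-1}(A)$ is homeomorphic to the pseudocompact set $A$ (via the homeomorphism $e|_{e^{-1}(\overline{A})}$ from the leader construction), so every continuous real-valued image of it is bounded. Your care about the subspace-topology identification is appropriate and matches the paper's construction of the $\lambda$-leader.
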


Denote by $C^{*}(X,Y)$ the set of all continuous functions from
$X$ into $Y$ such that $f(X)$ is bounded in $Y$. Note that if
$\mu$ is a natural uniformity on a topological ring $Y$ then
$C^{*}_\mu(X,Y)$ is a topological ring.

\begin{theorem}\label{t1_3_16} Let $X$ be a Hausdorff space, $\lambda$ be a cover of $X$ and $X$ be a $\lambda_f$-space. Let $Y$
be a Hausdorff topological ring. Moreover, for every $f\in C(X,Y)$
and $A\in\lambda$ the set $f(A)$ is bounded in $Y$. If $\mu$ is a
natural uniformity on  $Y$ then $C_{\lambda,\mu}(X,Y)$ is
isomorphic to the inverse limit of the system $S^{*}(X,\lambda,Y)=
\{C^{*}_\mu(\overline{A},Y), \pi^{A_1}_{A_2},\lambda\}$.
\end{theorem}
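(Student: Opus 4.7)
The plan is to combine Theorem~\ref{t1_2_7} (which already gives a uniform homeomorphism $C_{\lambda,\mu}(X,Y)\cong\lim\limits_{\longleftarrow}S(X,\lambda,Y)$ for the ``unstarred'' system) with the ring-preservation argument used in Theorem~\ref{t1_3_6}. The boundedness hypothesis on $f(A)$ is what lets one replace $C_\mu(\overline{A},Y)$ by $C^*_\mu(\overline{A},Y)$ without altering the inverse limit, and by Theorem~\ref{t1_3_9} it simultaneously guarantees that $C_{\lambda,\mu}(X,Y)$ is a topological ring.

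First I would record two preliminary observations. By Remark~\ref{t1_2_2}(3), each $\overline{A}$ again lies in $\lambda$, so by hypothesis $f(\overline{A})$ is bounded in $Y$ for every $f\in C(X,Y)$; hence the restriction $f|_{\overline{A}}$ belongs to $C^*(\overline{A},Y)$. Note also that $C^*_\mu(\overline{A},Y)$ is a topological ring (as remarked just before the theorem), and that the restriction maps $\pi^{A_1}_{A_2}\colon C^*_\mu(\overline{A_1},Y)\to C^*_\mu(\overline{A_2},Y)$ are uniformly continuous (Lemma~\ref{t1_2_6}) and preserve pointwise addition and multiplication, so they are ring homomorphisms. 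Thus $S^*(X,\lambda,Y)$ is a well-defined inverse system of topological rings.

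Next I would invoke Theorem~\ref{t1_2_7} to obtain the uniform homeomorphism $F\colon\lim\limits_{\longleftarrow}S(X,\lambda,Y)\to C_{\lambda,\mu}(X,Y)$ given by $F(\{f_A\})=\bigtriangledown_{A\in\lambda}f_A$. A thread $\{f_A\}$ lies in $\lim\limits_{\longleftarrow}S^*(X,\lambda,Y)$ precisely when each $f_A$ has relatively small (bounded) image. Since each $f_A$ equals $F(\{f_A\})|_{\overline{A}}$ and the hypothesis forces $F(\{f_A\})(\overline{A})$ to be bounded in $Y$, we have the set-theoretic equality
\[
\lim\limits_{\longleftarrow}S(X,\lambda,Y)=\lim\limits_{\longleftarrow}S^*(X,\lambda,Y),
\]
and they carry the same uniformity induced from $\prod_{A\in\lambda}C_\mu(\overline{A},Y)$. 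Hence $F$ is also a uniform homeomorphism onto $C_{\lambda,\mu}(X,Y)$ when its domain is viewed as $\lim\limits_{\longleftarrow}S^*(X,\lambda,Y)$.

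It remains to check that $F$ respects $+$ and $*$, which is immediate from the pointwise definitions: for threads $\{f^1_A\},\{f^2_A\}$ one has $\bigtriangledown f^1_A(x)+\bigtriangledown f^2_A(x)=\bigtriangledown(f^1_A+f^2_A)(x)$ and the analogous identity for $*$, exactly as in Theorem~\ref{t1_3_6}. Hence $F$ is a topological-ring isomorphism. The main obstacle is conceptual rather than computational: one must recognize that the boundedness hypothesis is doing double duty---it ensures $C_{\lambda,\mu}(X,Y)$ is a ring and simultaneously forces every element of $\lim\limits_{\longleftarrow}S(X,\lambda,Y)$ to land in the ``starred'' subsystem---so that Theorem~\ref{t1_2_7} upgrades automatically to the stronger statement about $S^*(X,\lambda,Y)$.
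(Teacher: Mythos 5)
Your proposal is correct and follows essentially the same route as the paper: Lemma~\ref{t1_2_6} for uniform continuity of the bonding maps, the computation from Theorem~\ref{t1_3_6} for preservation of $+$ and $*$, and Theorem~\ref{t1_2_7} for the uniform homeomorphism $F$. The one point you treat more carefully than the paper is the identification $\lim\limits_{\longleftarrow}S(X,\lambda,Y)=\lim\limits_{\longleftarrow}S^{*}(X,\lambda,Y)$ via the boundedness hypothesis (using Remark~\ref{t1_2_2}(3) to pass from $A$ to $\overline{A}$), which the paper leaves implicit in its appeal to Theorem~\ref{t1_2_8}.
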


\begin{proof} We need to check the following conditions: (1) maps $\pi^{A_1}_{A_2}$ are continuous and
preserved of operations of the ring $C_{\lambda,\mu}(X,Y)$;

 (2) mapping $F: \lim\limits_\leftarrow S^{*}(X,\lambda,Y)
\rightarrow C_{\lambda,\mu}(X,Y)$ (similar the mapping $F$ in the
proof of Theorem \ref{t1_2_7}) is a homeomorphism, preserving of
ring operations.

Let us check the condition (1). By Lemma \ref{t1_2_6}, $\pi^{A_1}_{A_2}$ are
uniform continuous.

Similar the proof of Theorem \ref{t1_3_6}, operations of
$\pi^{A_1}_{A_2}$  are preserved.

Let us check the condition (2). Similar the proof of Theorem \ref{t1_2_8},
$F: \lim\limits_\leftarrow S^{*}(X,\lambda,Y) \rightarrow
C_{\lambda,\mu}(X,Y)$ is a uniform homeomorphism.

Similar the proof of Theorem \ref{t1_3_6}, operations are
preserved.
\end{proof}

\begin{corollary}\label{t1_3_17}{\it Let $X$ be a Hausdorff space,
$\lambda\subseteq 2^X$, $Y$ be a Hausdorff topological ring and
$\mu$ be a natural uniformity on $Y$. Let $e:X_{\lambda}
\rightarrow X$ be a natural condensation of $\lambda$-leader
$X_\lambda$ onto $X$ and for every $A\in\lambda$ and $f\in
C(X_\lambda,Y)$ the set $f^{-1}(e(F))$ is bounded in $Y$. Then
$C_{\lambda,\mu}(X,Y)$ is a subring of the topological ring
$\lim\limits_\leftarrow S^{*}(X,\lambda,Y)$.}
\end{corollary}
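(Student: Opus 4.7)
The plan is to chain together Theorem~\ref{t1_3_13} and Theorem~\ref{t1_3_16}, using the $\lambda$-leader $X_\lambda$ as a bridge between $C_{\lambda,\mu}(X,Y)$ and the inverse limit of ``nice'' pieces. More precisely, I will embed $C_{\lambda,\mu}(X,Y)$ as a subring of $C_{e^{-1}(\lambda),\mu}(X_\lambda,Y)$, and then identify the latter with $\lim_{\leftarrow}S^*(X,\lambda,Y)$ by transporting the inverse system across the condensation $e$.

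First I would apply Theorem~\ref{t1_3_13}: the hypothesis on boundedness of $f(e^{-1}(A))$ for all $f\in C(X_\lambda,Y)$ and $A\in\lambda$ is exactly what that theorem requires, so $e^{\#}:C_{\lambda,\mu}(X,Y)\to C_{e^{-1}(\lambda),\mu}(X_\lambda,Y)$ is a ring homeomorphism onto its image. In particular, $C_{\lambda,\mu}(X,Y)$ is realized as a subring of $C_{e^{-1}(\lambda),\mu}(X_\lambda,Y)$.

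Next I would apply Theorem~\ref{t1_3_16} to the Hausdorff space $X_\lambda$ with cover $e^{-1}(\lambda)$. The hypotheses are verified as follows: $X_\lambda$ is an $e^{-1}(\lambda)$-space by construction of the $\lambda$-leader, hence by Theorem~\ref{th119} it is an $e^{-1}(\lambda)_f$-space; $e^{-1}(\lambda)$ covers $X_\lambda$ since $e$ is a bijection and $\lambda$ covers $X$; and the boundedness assumption in the corollary guarantees $g(B)$ is bounded in $Y$ for every $g\in C(X_\lambda,Y)$ and $B\in e^{-1}(\lambda)$. Theorem~\ref{t1_3_16} then yields a topological ring isomorphism
\[
C_{e^{-1}(\lambda),\mu}(X_\lambda,Y)\;\cong\;\lim_{\longleftarrow}\,S^*(X_\lambda,e^{-1}(\lambda),Y).
\]

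Finally I would identify $S^*(X_\lambda,e^{-1}(\lambda),Y)$ with $S^*(X,\lambda,Y)$. Since $e|_{\overline{e^{-1}(A)}}:\overline{e^{-1}(A)}\to\overline{A}$ is a homeomorphism for every $A\in\lambda$ (by the defining property of $X_\lambda$), composition with this homeomorphism induces a topological ring isomorphism between $C^*_\mu(\overline{A},Y)$ and $C^*_\mu(\overline{e^{-1}(A)},Y)$; these isomorphisms commute with the bonding maps because both are given by restriction. Thus the two inverse systems are isomorphic, their inverse limits agree as topological rings, and chaining the three identifications produces the desired realization of $C_{\lambda,\mu}(X,Y)$ as a subring of $\lim_{\longleftarrow}S^*(X,\lambda,Y)$. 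The main care-requiring step is the commuting-square check in this last identification, but it is essentially the same compatibility argument used in the bijection $F$ of Theorem~\ref{t1_2_7}; once that is in place the corollary is immediate.
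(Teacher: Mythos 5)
Your proposal is correct and is exactly the intended derivation: the paper states this result as an immediate corollary of Theorems \ref{t1_3_13} and \ref{t1_3_16} without writing out a proof, and your chain --- embed via $e^{\#}$ using Theorem \ref{t1_3_13}, apply Theorem \ref{t1_3_16} to the $e^{-1}(\lambda)_f$-space $X_\lambda$, then transport the inverse system back along the homeomorphisms $e|_{e^{-1}(\overline{A})}$ --- supplies precisely the missing details (including the correct reading of the typo $f^{-1}(e(F))$ as $f(e^{-1}(A))$ and the tacit standing assumption from Remark \ref{t1_2_2} that $\lambda$ covers $X$).
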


 Similar of Theorem \ref{t1_3_13} for Tychonoff spaces,
the statement of Corollary \ref{t1_3_1} can be simplified.

\begin{corollary}\label{t1_3_18} {\it Let $X$ be a Tychonoff space,
$\lambda\subseteq 2^X$, $Y$ be a complete Hausdorff topological
ring and $\mu$ be a natural uniformity on $Y$. If for every
$A\in\lambda$ and $f\in C(X,Y)$ the set $f(A)$ is bounded in $Y$
(i.e., $C_{\lambda,\mu}(X,Y)$ is a topological ring) then the
topological ring $\lim\limits_\leftarrow S^{*}(X,\lambda,Y)$ is
the completion of $C_{\lambda,\mu}(X,Y)$.}
\end{corollary}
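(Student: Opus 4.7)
The plan is to use the $\lambda_f$-leader $X_{\tau\lambda}$ as a bridge: first identify the completion of $C_{\lambda,\mu}(X,Y)$ via Theorem \ref{t1_2_11}, then rewrite this completion as an inverse limit via Theorem \ref{t1_3_16} applied to $X_{\tau\lambda}$, and finally match that inverse system with $S^{*}(X,\lambda,Y)$ through the restriction homeomorphisms induced by $e$.

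First, by Theorem \ref{t1_2_11}, the space $C_{e^{-1}(\lambda),\mu}(X_{\tau\lambda},Y)$ is the uniform completion of $C_{\lambda,\mu}(X,Y)$ along $e^{\#}$. Since $e^{\#}$ preserves pointwise addition and multiplication (cf.\ the argument in the proof of Theorem \ref{t1_3_5}), it is a ring homomorphism, and the ring operations on the completion extend uniquely by continuity from those on the dense image $e^{\#}(C_{\lambda,\mu}(X,Y))$. Hence $C_{e^{-1}(\lambda),\mu}(X_{\tau\lambda},Y)$ is in fact the completion of $C_{\lambda,\mu}(X,Y)$ as a topological ring, not merely as a uniform space.

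Next, I would apply Theorem \ref{t1_3_16} to the Hausdorff $(e^{-1}(\lambda))_{f}$-space $X_{\tau\lambda}$ (a $(e^{-1}(\lambda))$-space by construction of the leader) to obtain a ring isomorphism
\[
C_{e^{-1}(\lambda),\mu}(X_{\tau\lambda},Y)\;\cong\;\lim\limits_\leftarrow S^{*}(X_{\tau\lambda},e^{-1}(\lambda),Y).
\]
Then the homeomorphisms $e|_{e^{-1}(\overline{A})}:e^{-1}(\overline{A})\to \overline{A}$ induce ring isomorphisms $C^{*}_{\mu}(e^{-1}(\overline{A}),Y)\cong C^{*}_{\mu}(\overline{A},Y)$ for each $A\in\lambda$, and these clearly commute with the bonding maps $\pi^{A_1}_{A_2}$ (restriction commutes with composition with $e$). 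This yields an isomorphism of inverse systems, hence an isomorphism of their inverse limits as topological rings
\[
\lim\limits_\leftarrow S^{*}(X_{\tau\lambda},e^{-1}(\lambda),Y)\;\cong\;\lim\limits_\leftarrow S^{*}(X,\lambda,Y).
\]
Composing the three identifications produces the desired conclusion that $\lim\limits_\leftarrow S^{*}(X,\lambda,Y)$ is the completion of $C_{\lambda,\mu}(X,Y)$.

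The main obstacle is the boundedness hypothesis required in order to invoke Theorem \ref{t1_3_16} on $X_{\tau\lambda}$: one must show that for every $g\in C(X_{\tau\lambda},Y)$ and every $B=e^{-1}(A)\in e^{-1}(\lambda)$, the set $g(B)$ is bounded in $Y$. Via the homeomorphism $e|_{e^{-1}(\overline{A})}$, this reduces to showing that every $h\in C(\overline{A},Y)$ has $h(A)$ bounded, which is a priori stronger than the corollary's hypothesis, phrased only for restrictions of functions from $C(X,Y)$. I expect this to be resolved either by a pseudocompactness-type consequence of the boundedness hypothesis on members of $\lambda$ (which forces $\overline{A}$ to behave like a set on which every continuous map into $Y$ is bounded), or, alternatively, by running the argument inside the abelian group completion of Corollary \ref{t1_3_8} and characterizing its ring-valued elements as precisely the inverse limit of the $C^{*}_{\mu}(\overline{A},Y)$.
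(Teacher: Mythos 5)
You have the right skeleton, and it is the same as the paper's: identify the completion of $C_{\lambda,\mu}(X,Y)$ with $C_{e^{-1}(\lambda),\mu}(X_{\tau\lambda},Y)$ via Theorem \ref{t1_2_11}, apply Theorem \ref{t1_3_16} to $X_{\tau\lambda}$ (a $\gamma_f$-space for $\gamma=e^{-1}(\lambda)$), and transport the inverse system back along the homeomorphisms $e|_{e^{-1}(\overline{A})}$. But the step you defer as ``the main obstacle'' is precisely the substance of the corollary, and neither of the two ways you suggest for closing it is actually carried out, so the argument as written is incomplete. The paper closes the gap as follows: the uniformity of $C_{\lambda,\mu}(X,Y)$ is the natural uniformity of its additive group, hence the natural uniformity of the topological ring $C_{\lambda,\mu}(X,Y)$; the completion of a topological ring with respect to this uniformity is again a topological ring (Proposition 6 in \cite{Bur69}); and by Theorem \ref{t1_2_11} that completion is $C_{e^{-1}(\lambda),\mu}(X_{\tau\lambda},Y)$. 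Therefore $C_{e^{-1}(\lambda),\mu}(X_{\tau\lambda},Y)$ is a topological ring, and the characterization of Theorem \ref{t1_3_9}, applied now to $X_{\tau\lambda}$ and $e^{-1}(\lambda)$, yields exactly the hypothesis you were missing: $g(e^{-1}(A))$ is bounded in $Y$ for every $g\in C(X_{\tau\lambda},Y)$ and every $A\in\lambda$ (this is recorded separately as Corollary \ref{t1_3_19}). With that in hand, Theorem \ref{t1_3_16} applies to $X_{\tau\lambda}$ and the rest of your argument goes through.

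Concerning your two candidate fixes: the first (a pseudocompactness-type consequence forcing every $h\in C(\overline{A},Y)$ to have $h(A)$ bounded) is not available in general. The hypothesis only controls restrictions of functions continuous on all of $X$, while $X_{\tau\lambda}$ carries a strictly finer topology; indeed the whole point of the leader is that any map continuous on each $\overline{A}$ becomes continuous on $X_{\tau\lambda}$, so $C(X_{\tau\lambda},Y)$ typically contains many functions that do not correspond under $e$ to members of $C(X,Y)$, and nothing in the hypothesis bounds their images without the completion argument. Your second suggestion (working inside the group completion of Corollary \ref{t1_3_8} and identifying its ring structure) is in spirit the paper's route, but the decisive ingredient needed to make it precise is the Bourbaki fact that the completion of a topological ring is a topological ring, combined with Theorem \ref{t1_3_9}; absent that, the boundedness condition on the leader is not established and Theorem \ref{t1_3_16} cannot be invoked.
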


\begin{proof}
It is enough to prove that
$C_{e^{-1}(\lambda),\mu}(X_{\tau\lambda},Y)$ is a topological
ring, and apply Theorems \ref{t1_3_7} and \ref{t1_3_16}. Note that
a uniformity of the space $C_{\lambda,\mu}(X,Y)$ is a uniformity
of ring $C_{\lambda,\mu}(X,Y)$. Recall that the completion of a
topological ring is a topological ring (Proposition 6 in
\cite{Bur69}).
\end{proof}

\begin{corollary}\label{t1_3_19}{\it Let $X$ be a Tychonoff space, $Y$ be a complete Hausdorff topological ring with uniformity
$\mu$, $\lambda$ be a cover of $X$ and $C_{\lambda,\mu}(X,Y)$ be a
topological ring. If $X_{\tau\lambda}$ is a $\lambda_f$-leader of
$X$ and $e:X_{\tau\lambda} \rightarrow X$ is a natural
condensation then for every $B \in e^{-1}(\lambda)$ and
$f:X_{\tau\lambda} \rightarrow Y$ the set $f(B)$ is bounded in
$Y$. In particular, if $\lambda$ is a family of bounded subsets of
$X$ then $e^{-1}(\lambda)$ is a family of bounded subsets of
$X_{\tau\lambda}$.}
\end{corollary}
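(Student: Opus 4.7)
The plan is to transplant the topological-ring hypothesis on $C_{\lambda,\mu}(X,Y)$ to the uniformly larger space $C_{e^{-1}(\lambda),\mu}(X_{\tau\lambda},Y)$ and then read off the desired boundedness from the characterisation in Theorem~\ref{t1_3_9}.

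First, I would invoke Theorem~\ref{t1_2_11} to identify $C_{e^{-1}(\lambda),\mu}(X_{\tau\lambda},Y)$ as the uniform completion of $C_{\lambda,\mu}(X,Y)$. By hypothesis $C_{\lambda,\mu}(X,Y)$ is a topological ring, and its uniformity coincides with the natural additive one (Remark~\ref{t1_3_3}); since the completion of a Hausdorff topological ring is again a topological ring (Proposition~6 of~\cite{Bur69}, as already invoked in the proof of Corollary~\ref{t1_3_18}), the completion inherits a continuous multiplication. To see that this extended multiplication is just pointwise multiplication on $C_{e^{-1}(\lambda),\mu}(X_{\tau\lambda},Y)$, I would note that for each fixed $x\in X_{\tau\lambda}$ the point-evaluation is continuous in the uniform-convergence topology, so both $(f,g)\mapsto (f\otimes g)(x)$ and $(f,g)\mapsto f(x)\cdot g(x)$ are continuous maps into $Y$ that agree on the dense subring $e^{\#}(C_{\lambda,\mu}(X,Y))$; hence they agree everywhere.

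Second, I would apply the only-if direction of Theorem~\ref{t1_3_9} on the Hausdorff space $X_{\tau\lambda}$ with family $e^{-1}(\lambda)$ and ring $(Y,\mu)$: since $C_{e^{-1}(\lambda),\mu}(X_{\tau\lambda},Y)$ is now known to be a topological ring, every $B\in e^{-1}(\lambda)$ must satisfy $f(B)$ bounded in $Y$ for every continuous $f:X_{\tau\lambda}\to Y$, which is the first conclusion. For the ``in particular'' clause I would specialise to $Y=\mathbb{R}$ with its natural uniformity $\rho$: if $\lambda$ consists of functionally bounded subsets of $X$, then Corollary~\ref{t1_3_11} ensures $C_{\lambda,\rho}(X)$ is a topological ring, and the main statement applied to $Y=\mathbb{R}$ yields $f(B)$ bounded in $\mathbb{R}$ for every $B\in e^{-1}(\lambda)$ and every continuous $f:X_{\tau\lambda}\to\mathbb{R}$, i.e., $e^{-1}(\lambda)$ is a family of bounded subsets of $X_{\tau\lambda}$.

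The only non-formal step I expect to need care is matching the abstract completion-ring multiplication with honest pointwise multiplication on the function space; this is the step where I actually use density of $e^{\#}(C_{\lambda,\mu}(X,Y))$ together with continuity of point-evaluation. Everything else is a direct assembly of the embedding, completion, and characterisation theorems established earlier in the paper.
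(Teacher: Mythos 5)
Your argument is essentially the paper's own: the published proof likewise routes through Corollary \ref{t1_3_18} (the completion of a Hausdorff topological ring is a topological ring, applied to $C_{e^{-1}(\lambda),\mu}(X_{\tau\lambda},Y)$ identified via Theorem \ref{t1_2_11} as the completion of $C_{\lambda,\mu}(X,Y)$) and then invokes the only-if direction of Theorem \ref{t1_3_9}. The one place you go beyond the paper is in explicitly checking, via density of $e^{\#}(C_{\lambda,\mu}(X,Y))$ and continuity of point-evaluation, that the completion's multiplication agrees with pointwise multiplication --- a step the paper leaves implicit but which is genuinely needed before Theorem \ref{t1_3_9} can be applied.
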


\begin{proof} In the proof of Corollary
\ref{t1_3_18}, we proved that
$C_{e^{-1}(\lambda),\mu}(X_{\tau\lambda},Y)$ is a topological
ring. Remain to apply Theorem \ref{t1_3_9}.
\end{proof}

\begin{remark} For a Tychonoff space $X$, the condition for the
boundedness of $f(e^{-1}(A))$ for every $A\in\lambda$ and $f\in
C(X_{\tau\lambda},Y)$ can be replaced by the condition that the
space $C_{\lambda,\mu}(X,Y)$ is a ring.
\end{remark}

\subsection{Topological vector spaces}

The last algebraic structure study in this section is when
$C(X,Y)$ is a vector space over the field of real numbers.

If $X$ is a topological space, $Y$ is a topological vector space
then $C(X,Y)$ is a vector space.

Let $\mu$ be a natural uniformity on $Y$ (a uniformity of a
topological vector space is called {\it natural} if it is
uniformity of it's of additive group). The following theorem get
the answer on the question: when $C_{\lambda,\mu}(X,Y)$ is a
topological vector space.

\medskip

Recall that $B\in Y$ is said {\it bounded}  (in a topological
vector space) if for every neighborhood $U$ of zero in $Y$ there
is $\alpha \in \mathbb{R}$ such that $\alpha U \in B$
(\cite{Bur75}).

\begin{theorem}\label{t1_3_20} Let $Y$ be a topological vector space,
$\mu$ be a natural uniformity on $Y$, $X$ be a topological space
and $\lambda$ be a cover of $X$. Then $C_{\lambda,\mu}(X,Y)$ is a
topological vector space if and only if $A$ is $Y$-bounded for
every $A\in\lambda$.
\end{theorem}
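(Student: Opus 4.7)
The plan is to reduce the theorem to checking continuity of scalar multiplication, since the additive topological group structure on $C_{\lambda,\mu}(X,Y)$ is already supplied by Corollary~\ref{t1_3_4} applied to the additive group of $Y$. By Proposition~\ref{t1_3_2}, a base of neighborhoods of zero in $C_{\lambda,\mu}(X,Y)$ consists of sets of the form $[A,V]$ with $A\in\lambda$ and $V$ a neighborhood of zero in $Y$, so the only remaining content is joint continuity of the scalar multiplication map $(\alpha,f)\mapsto \alpha f$ from $\mathbb{R}\times C_{\lambda,\mu}(X,Y)$ to $C_{\lambda,\mu}(X,Y)$. This parallels the structure of the proof of the topological ring theorem~\ref{t1_3_9}, with the absorption condition for a topological vector space replacing the boundedness condition for a topological ring.

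For the necessity direction, fix $A\in\lambda$, $f\in C(X,Y)$, and an arbitrary neighborhood $V$ of zero in $Y$. Applying continuity of scalar multiplication at the pair $(0,f)$ to the basic zero neighborhood $[A,V]$, we obtain $\epsilon>0$ and a neighborhood $W$ of $f$ such that $\alpha g\in[A,V]$ whenever $|\alpha|<\epsilon$ and $g\in W$. Specializing to $g=f$ yields $\alpha f(A)\subseteq V$ for all $|\alpha|<\epsilon$, which is precisely the statement that $V$ absorbs $f(A)$. Since $V$ was arbitrary, $f(A)$ is bounded in $Y$.

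For the sufficiency direction, fix $(\alpha_0,f_0)\in \mathbb{R}\times C(X,Y)$ and a basic zero neighborhood $[A,V]$. Choose a balanced neighborhood $V'$ of zero with $V'+V'\subseteq V$. Using that $f_0(A)$ is bounded, pick $\delta>0$ with $\beta f_0(A)\subseteq V'$ for all $|\beta|<\delta$; set $M=|\alpha_0|+\delta$, and choose a balanced neighborhood $V''$ of zero with $MV''\subseteq V'$. Decomposing
$$\alpha f-\alpha_0 f_0=(\alpha-\alpha_0)f_0+\alpha(f-f_0),$$
we see that whenever $|\alpha-\alpha_0|<\delta$ (so $|\alpha|<M$) and $f-f_0\in[A,V'']$, both summands send $A$ into $V'$ (the second using that $V''$ is balanced, giving $\alpha V''\subseteq MV''\subseteq V'$). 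Hence $\alpha f-\alpha_0 f_0\in [A,V]$, establishing joint continuity, and therefore $C_{\lambda,\mu}(X,Y)$ is a topological vector space.

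The main obstacle is conceptual rather than technical: one must recognize that $Y$-boundedness of $f(A)$ for every continuous $f$ is exactly what is required to make scalar multiplication absorb neighborhoods uniformly on $A$, not some stronger pointwise condition. Once this correspondence is isolated, the rest is the standard balanced-neighborhood splitting argument for jointly continuous bilinear-type operations, completely analogous to the proof of Theorem~\ref{t1_3_9}.
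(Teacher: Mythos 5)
Your proof is correct, and it reaches the theorem by a slightly different route than the paper. The paper does not verify joint continuity of scalar multiplication by hand: it instead checks that the zero-neighborhood filter $\beta_c=\{[A,V]\}$ satisfies the Bourbaki axioms for a vector topology --- (1) the additive condition, (2) every $W\in\beta_c$ is absorbing, (3) every $W\in\beta_c$ is balanced, (4) $\alpha W\in\beta_c$ for $\alpha\neq 0$ --- observes that (1), (3), (4) hold for arbitrary $\lambda$ (using $\alpha[A,V]=[A,\alpha V]$), and isolates (2) as the condition equivalent to $Y$-boundedness of every $A\in\lambda$. Your argument makes explicit what the general criterion hides: the necessity direction (continuity of $(\alpha,f)\mapsto\alpha f$ at $(0,f)$ forces $[A,V]$ to absorb $f$, i.e.\ $V$ to absorb $f(A)$) is essentially the paper's argument for condition (2), while your sufficiency direction carries out the full splitting $\alpha f-\alpha_0 f_0=(\alpha-\alpha_0)f_0+\alpha(f-f_0)$ with balanced neighborhoods, which is exactly the computation that the Bourbaki filter-base theorem packages once and for all. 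What the paper's approach buys is brevity and a clean separation of which axioms depend on $\lambda$ (only absorption does); what yours buys is self-containedness, since it never appeals to the external criterion. One pedantic point: in the sufficiency direction you should note (as is standard) that balanced neighborhoods form a base at zero in $Y$, so that $V'$ and $V''$ exist; with that remark the argument is complete and matches the theorem as stated.
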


\begin{proof}

By Propositions \ref{t1_3_1} and \ref{t1_3_2},
$C_{\lambda,\mu}(X,Y)$ is a topological group with a base of
neighborhoods  $\beta_c=\{U(A,V):A\in\lambda,V \in \beta\}$ of
zero where $\beta$ is a base of neighborhoods of zero in $Y$.

It is enough to prove that the following four conditions are true
if and only if any continuous image of an element of $\lambda$
is bounded in $Y$.

(1) For every $W\in\beta_c$ there is $W_1 \in \beta_c$ such that
$W_1+W_1 \in W$.

(2) Every $W\in\beta_c$ is absorbing (i.e. $\forall f\in
C(X,Y)\exists\alpha  \in R:f \in \alpha W$).

(3) Every $W\in\beta_c$ is balanced set (i.e. $\forall\alpha \in
R:\mid\alpha\mid\le 1\Leftrightarrow \alpha W \in W$).

(4) For every $W\in\beta_c$ and  $\alpha \in \mathbb{R}$
$(\alpha\neq 0)$, $\alpha W\in\beta_c$.

By Proposition \ref{t1_3_1}, the condition (1) holds for any
family $\lambda$.

The condition (4) is true for any family $\lambda$. Indeed, let
$W=[A,V]$ belongs $\beta_c$. Clearly $\alpha W=[A,\alpha V]$ and
it belongs to $\beta_c$.

Let us check the condition (3).  Let $W=[A,V] \in \beta_c$, then $\alpha
W=[A,\alpha V] \in [A,V]$ where $\mid\alpha\mid\le 1$. Thus, the
condition (3) is true for any family $\lambda$.

Let $\lambda$ be a family of $Y$-bounded subsets of $X$, i.e.
whenever $A\in\lambda$ and $f\in C(X,Y)$ the set $f(A)$ is bounded
in $Y$. Let $g\in C(X,Y)$. We show that any neighborhood
$W=[A,V]\in \beta_c$ absorbing to $g$. Indeed, $g(A)$ is bounded
in $Y$, hence, there is $\alpha \in R$ such that $g(A) \in \alpha
$V. Then $g \in [A,\alpha V]=\alpha [A,V]$, i.e. $g$ absorbing by
$W$.

Conversely, assume that every $W\in\beta_c$ is absorbing and there
are $A\in\lambda$ and $f\in C(X,Y)$ such that $f(A)$ is not
bounded in $Y$. Then for every neighborhood $V$ of zero in $Y$,
$W=[A,V]$ is not absorbing to $f$.

Thus, we proved that condition (2) is true if and only if any
continuous image of an element of $\lambda$ is bounded in $Y$
which completes the proof of theorem.
\end{proof}

\begin{corollary}\label{t1_3_21}
{\it Let $X$ be a topological space, $\lambda$ be a family of
bounded (functionally bounded) subsets of $X$. Then
$C_{\lambda,\rho}(X)$ is a topological vector space.}
\end{corollary}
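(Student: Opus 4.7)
The plan is to obtain this as an immediate consequence of Theorem \ref{t1_3_20} applied with $Y=\mathbb{R}$ and $\mu$ the natural uniformity induced by $\rho$. The content of Theorem \ref{t1_3_20} is that $C_{\lambda,\mu}(X,Y)$ is a topological vector space if and only if every $A\in\lambda$ is $Y$-bounded, in the sense that $f(A)$ is bounded in $Y$ for every $f\in C(X,Y)$. So all I have to do is verify that each $A\in\lambda$ is $\mathbb{R}$-bounded under the stated hypothesis.

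For the functionally bounded case this is definitional: a subset $A\subseteq X$ is called functionally bounded precisely when $f(A)$ is a bounded subset of $\mathbb{R}$ for every $f\in C(X,\mathbb{R})$, which is exactly the $\mathbb{R}$-boundedness condition needed in Theorem \ref{t1_3_20}. For the topologically bounded case I will invoke the same observation already used in the proof of Corollary \ref{t1_3_11}: a continuous image of a (topologically) bounded set is bounded, and bounded subsets of $\mathbb{R}$ are precisely the sets lying in some interval, hence bounded in the sense of the topological vector space $\mathbb{R}$.

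Once these two verifications are recorded, the conclusion is a single appeal to Theorem \ref{t1_3_20}: the family $\lambda$ satisfies the hypothesis of that theorem with $Y=\mathbb{R}$, therefore $C_{\lambda,\rho}(X)$ is a topological vector space. There is no real obstacle here; the only thing to be careful about is making the terminology match, namely, identifying ``bounded subset of $\mathbb{R}$'' with ``bounded subset of the topological vector space $\mathbb{R}$'' so that the hypothesis of Theorem \ref{t1_3_20} is literally what we have verified. Given how short the deduction is, I would write the corollary's proof in just two or three lines, citing Theorem \ref{t1_3_20} and the definition (respectively the elementary fact about continuous images of bounded sets) to supply the $\mathbb{R}$-boundedness of each $A\in\lambda$.
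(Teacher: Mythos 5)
Your proposal is correct and is exactly the intended deduction: the paper gives no separate argument for this corollary, treating it as an immediate consequence of Theorem \ref{t1_3_20} with $Y=\mathbb{R}$, and the paper's own justification of the parallel ring statement (Corollary \ref{t1_3_11}) uses the very observation you cite, that continuous images of bounded sets are bounded and that bounded subsets of $\mathbb{R}$ are bounded in the TVS sense. Nothing is missing.
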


\begin{corollary}\label{t1_3_22}
{\it Let $X$ be a topological space, $Y$ be a Hausdorff
topological vector space. Then $C_c(X,Y)$ is a topological vector
space.}
\end{corollary}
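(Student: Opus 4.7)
The plan is to apply Theorem \ref{t1_3_20} directly. Recall that $C_c(X,Y)$ denotes $C_{\lambda,\mu}(X,Y)$ where $\lambda$ is the family of all compact subsets of $X$ and $\mu$ is the natural uniformity on the additive group of $Y$. By Theorem \ref{t1_3_20}, $C_c(X,Y)$ is a topological vector space if and only if every $A\in\lambda$ is $Y$-bounded, that is, $f(A)$ is bounded in $Y$ for every $f\in C(X,Y)$.

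So the only thing to verify is: if $A\subseteq X$ is compact and $f\colon X\to Y$ is continuous, then $f(A)$ is bounded in $Y$. First I would note that $f(A)$ is compact in $Y$, being a continuous image of a compact set. Then I would invoke the classical fact that compact subsets of a Hausdorff topological vector space are bounded: given a neighborhood $U$ of zero in $Y$, choose a balanced neighborhood $V$ of zero with $V\subseteq U$; since $Y=\bigcup_{n\in\mathbb{N}} nV$ and $f(A)$ is compact, finitely many $n_1 V,\dots,n_k V$ cover $f(A)$, and because $V$ is balanced the largest $n_i$ absorbs $f(A)$ into $U$. Hence $f(A)$ is bounded.

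Combining these two steps with Theorem \ref{t1_3_20} immediately yields that $C_c(X,Y)$ is a topological vector space. The only conceivable obstacle is that I am relying on the standard characterization of boundedness and the fact that compact sets in a Hausdorff TVS are bounded; since these are classical (and in fact were already used implicitly in the statement that relatively compact sets in topological rings are bounded, cf.\ Corollary \ref{t1_3_12}), no genuine difficulty arises.
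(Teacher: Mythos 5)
Your proposal is correct and follows exactly the route the paper intends: the corollary is stated as an immediate consequence of Theorem \ref{t1_3_20}, the only content being that continuous images of compact sets are compact and that compact subsets of a Hausdorff topological vector space are bounded, which you verify by the standard balanced-neighborhood covering argument. Your write-up simply makes explicit the details the paper leaves implicit, so there is nothing to correct.
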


\begin{corollary}\label{t1_3_23}
{\it Let $X$ be a topological space. Then $C_{\rho}(X)$ is a
topological vector space if and only if $X$ is pseudocompact.}
\end{corollary}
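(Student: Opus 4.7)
The plan is to deduce this directly from Theorem \ref{t1_3_20} by identifying the correct choice of $\lambda$ and $Y$. Recall from Section 2 that $C_\rho(X)$ denotes $C_{\lambda,\rho}(X,\mathbb{R})$ in the case $X\in\lambda$, i.e., uniform convergence on $X$ itself; thus we may take $\lambda=\{X\}$ (closed under finite unions, intersections, and subsets in the trivial sense once we pass to the hereditary closure, but all that matters is that the single relevant element is $X$). The target is $Y=\mathbb{R}$ with its usual uniformity $\mu$ induced by the metric $\rho$, which is manifestly a natural uniformity on the additive group of the topological vector space $\mathbb{R}$.

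First I would invoke Theorem \ref{t1_3_20}: under these hypotheses, $C_{\lambda,\mu}(X,Y)=C_\rho(X)$ is a topological vector space if and only if every $A\in\lambda$ is $Y$-bounded. With $\lambda=\{X\}$ and $Y=\mathbb{R}$, the condition collapses to: $X$ itself is $\mathbb{R}$-bounded, i.e., for every $f\in C(X,\mathbb{R})$ the image $f(X)$ is a bounded subset of $\mathbb{R}$.

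The second step is then purely a matter of unpacking a definition. A topological space $X$ is \emph{pseudocompact} precisely when every continuous real-valued function on $X$ has bounded range; equivalently, $f(X)$ is bounded in $\mathbb{R}$ for every $f\in C(X,\mathbb{R})$. This is exactly the condition obtained above, so the equivalence of the theorem reads \emph{$C_\rho(X)$ is a topological vector space iff $X$ is pseudocompact}, which is the claim.

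There is no real obstacle here; the only mild subtlety is the choice of $\lambda$. One should note that Theorem \ref{t1_3_20} is stated for $\lambda$ a cover of $X$, and $\{X\}$ trivially covers $X$; the hereditary and finite-union conditions from Remark \ref{t1_2_2} can be accommodated by passing to $\lambda=2^X$, since boundedness of every continuous image of $X$ automatically implies boundedness of every continuous image of every subset of $X$ (images of subsets are contained in $f(X)$). Thus the hypotheses of Theorem \ref{t1_3_20} are genuinely satisfied, and the corollary follows immediately.
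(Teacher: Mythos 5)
Your proposal is correct and follows exactly the route the paper intends: Corollary \ref{t1_3_23} is an immediate specialization of Theorem \ref{t1_3_20} to $\lambda\ni X$ and $Y=\mathbb{R}$, where $Y$-boundedness of $X$ is precisely the statement that every continuous real-valued function on $X$ has bounded range, i.e.\ pseudocompactness. Your observation that enlarging $\lambda$ to $2^X$ changes neither the uniformity (since $\langle X,M\rangle\subseteq\langle A,M\rangle$ for all $A\subseteq X$) nor the boundedness condition correctly disposes of the only hypothesis worth checking.
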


\begin{theorem}\label{t1_3_24}
Let $X$ be a Hausdorff space (resp. Tychonoff space), $Y$ be a
Hausdorff topological vector space, $\mu$ be a natural uniformity
on $Y$.  If a family $\lambda$ of subsets of $X$ such that for any
continuous function $f:X_\lambda \rightarrow Y$ (resp.
$f:X_{\tau\lambda} \rightarrow Y)$ from a $\lambda$-leader (resp.
$\lambda_f$-leader) into $Y$ and each $A \in e^{-1}(\lambda)$ the
set $f(A)$ is bounded in $Y$ where $e:X_\lambda \rightarrow X$
(resp. $e:X_{\tau\lambda} \rightarrow X$) is a natural
condensation of $\lambda$-leader $X_\lambda$ (resp.
$\lambda_f$-leader $X_{\tau\lambda})$ onto $X$, then
$e^\#:C_{\lambda,\mu}(X,Y) \to
C_{e^{-1}(\lambda),\mu}(X_\lambda,Y)$ (resp.
$e^\#:C_{\lambda,\mu}(X,Y) \to
C_{e^{-1}(\lambda),\mu}(X_{\tau\lambda},Y))$ is a linear
embedding.
\end{theorem}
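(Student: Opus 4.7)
The plan is to combine three ingredients already established in this section, in direct parallel to the arguments used for Theorems \ref{t1_3_5} and \ref{t1_3_13}. First, I would verify that both source and target are genuine topological vector spaces by appealing to Theorem \ref{t1_3_20}. For the target, this is the content of the hypothesis applied verbatim: every continuous $f$ on $X_\lambda$ (resp.\ $X_{\tau\lambda}$) is bounded on each member of $e^{-1}(\lambda)$. For the source, observe that if $f \in C(X,Y)$ then $f\circ e \in C(X_\lambda,Y)$ (resp.\ $C(X_{\tau\lambda},Y)$) and $(f\circ e)(e^{-1}(A)) = f(A)$ for each $A\in\lambda$, so the hypothesis transfers to yield boundedness of $f(A)$ in $Y$; Theorem \ref{t1_3_20} then makes $C_{\lambda,\mu}(X,Y)$ a topological vector space.

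Second, I would invoke Corollary \ref{t1_2_4} (in the Hausdorff case) or Corollary \ref{t1_2_5} (in the Tychonoff case) to conclude that $e^\#$ is a uniform embedding. Since the natural uniformity $\mu$ on $Y$ is the uniformity of the additive group of $Y$ by assumption, this is the correct uniformity to use, and a uniform embedding is in particular a topological embedding between the underlying topological vector spaces.

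Third, the preservation of the linear structure is a pointwise verification: for all $f,g \in C(X,Y)$, $\alpha\in\mathbb{R}$, and every $x$ in $X_\lambda$ (resp.\ $X_{\tau\lambda}$),
\[
e^\#(f+g)(x)=(f+g)(e(x))=f(e(x))+g(e(x))=e^\#(f)(x)+e^\#(g)(x),
\]
and similarly $e^\#(\alpha f)(x)=\alpha f(e(x))=\alpha\, e^\#(f)(x)$, so $e^\#$ is $\mathbb{R}$-linear. Together with the embedding property from the second step, this gives the desired linear embedding.

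The proof presents no real obstacle: the hypothesis was formulated precisely so that Theorem \ref{t1_3_20} applies to both ends of $e^\#$, after which the topological content is already supplied by Theorem \ref{t1_2_3} and its corollaries, and the algebraic content reduces to the pointwise identity $e^\#(\alpha f+\beta g)=\alpha e^\#(f)+\beta e^\#(g)$. The only bookkeeping point is that the Hausdorff/Tychonoff dichotomy in the statement runs in parallel, each branch being handled by the appropriate corollary of Theorem \ref{t1_2_3}.
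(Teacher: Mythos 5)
Your proposal is correct and follows essentially the same route as the paper's proof: apply Theorem \ref{t1_3_20} to see that both function spaces are topological vector spaces, use Theorem \ref{t1_2_3} (via its corollaries) to get that $e^\#$ is a uniform embedding, and verify linearity pointwise via $e^\#(\alpha_1 f+\alpha_2 g)(x)=(\alpha_1 f+\alpha_2 g)(e(x))$. Your explicit transfer of the boundedness hypothesis from $X_\lambda$ (resp.\ $X_{\tau\lambda}$) down to $X$ by composing with the condensation $e$ is a small detail the paper leaves implicit, but the argument is the same.
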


\begin{proof} By Theorem \ref{t1_3_20}, the topologies of spaces
$C_{\lambda,\mu}(X,Y),\linebreak
C_{e^{-1}(\lambda),\mu}(X_\lambda,Y)$ and
$C_{e^{-1}(\lambda),\mu}(X_{\tau\lambda},Y)$ compatible with
linear-structured of space. By Theorem \ref{t1_2_3}, $e^\#:
C_{\lambda,\mu}(X,Y) \to C_{e^{-1}(\lambda),\mu}(X_\lambda,Y)$ and
$e^\#:C_{\lambda,\mu}(X,Y) \to
C_{e^{-1}(\lambda),\mu}(X_{\tau\lambda},Y)$ are uniform embedding.
To complete the proof, it suffices to show that the mapping $e^\#$
is linear. Indeed, $e^\#(\alpha_1f$+$\alpha_2g)(x)=(\alpha_
1f$+$\alpha_2g)(e(x))=\alpha_1f(e(x))$+$\alpha_2g(e(x))=
\alpha_1e^\#(f)(x)$+$\alpha_2e^\#(g)(x)$, i.e.
$e^\#(\alpha_1f$+$\alpha_2g)$=$\alpha_1e^\#(f)$+$\alpha_2e^\#(g)$.
\end{proof}

Similar the case of topological rings, Theorem \ref{t1_3_24} has
the following corollaries.

\begin{corollary}\label{t1_3_25}
{\it Let $X$ be a Hausdorff space, $Y$ be a Hausdorff topological
vector space, then the topological vector space $C_c(X,Y)$ can be
viewed as a linear subspace of the space $C_c(X_{k},Y)$ where
$X_{k}$ is a $k$-leader of $X$.}
\end{corollary}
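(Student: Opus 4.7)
The plan is to specialize Theorem \ref{t1_3_24} to the case where $\lambda$ is the family of all non-empty compact subsets of the Hausdorff space $X$. With this choice, $C_{\lambda,\mu}(X,Y)$ coincides with $C_c(X,Y)$ (in the sense introduced after Definition \ref{t1_2_1}), and the $\lambda$-leader $X_\lambda$ is by definition the $k$-leader $X_k$. The role of $\mu$ is played by the natural uniformity of the additive group of the Hausdorff topological vector space $Y$.

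First I would verify the bounded-image hypothesis of Theorem \ref{t1_3_24}. Let $A\in\lambda$; since $X$ is Hausdorff, $A$ is compact and closed, so $\overline{A}=A$. By the construction of the $\lambda$-leader, the restriction $e|_{e^{-1}(A)}:e^{-1}(A)\to A$ is a homeomorphism, so $e^{-1}(A)$ is compact in $X_k$. Hence, for every continuous $f:X_k\to Y$, the image $f(e^{-1}(A))$ is compact in $Y$, and every compact subset of a Hausdorff topological vector space is bounded. Theorem \ref{t1_3_24} therefore supplies a linear embedding
\[
e^{\#}:C_c(X,Y)\to C_{e^{-1}(\lambda),\mu}(X_k,Y).
\]

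To finish, I would identify the target with $C_c(X_k,Y)$. Since $e$ is a bijection, we already know that $e^{-1}(A)$ is compact in $X_k$ for every $A\in\lambda$; conversely, any compact $K\subseteq X_k$ maps under the continuous bijection $e$ to a compact set $e(K)\subseteq X$, so $e(K)\in\lambda$ and $K=e^{-1}(e(K))\in e^{-1}(\lambda)$. Thus $e^{-1}(\lambda)$ is precisely the family of non-empty compact subsets of $X_k$, and so $C_{e^{-1}(\lambda),\mu}(X_k,Y)=C_c(X_k,Y)$. The embedding above then realizes $C_c(X,Y)$ as a linear subspace of $C_c(X_k,Y)$, which is the desired statement.

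The only genuine point to watch, which is more a routine verification than an obstacle, is this last identification: one must confirm that the compact subsets in the finer topology of $X_k$ are exactly the $e$-preimages of compact subsets of $X$, so that the topology of uniform convergence on $e^{-1}(\lambda)$ really coincides with the compact-open topology on $X_k$. Once this is in hand, the corollary is immediate from Theorem \ref{t1_3_24}.
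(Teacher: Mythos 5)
Your proposal is correct and follows essentially the same route as the paper, which states this corollary without proof as an immediate specialization of Theorem \ref{t1_3_24} to the family $\lambda$ of compact subsets of $X$. You have merely made explicit the routine verifications the paper leaves implicit (compact images are bounded in a Hausdorff topological vector space, and $e^{-1}(\lambda)$ is exactly the family of compact subsets of $X_k$ since $e$ restricts to homeomorphisms on members of $\lambda$), so there is nothing to correct.
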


\begin{corollary}\label{t1_3_26}
{\it Let $X$ be a Hausdorff space, $\lambda$ be a family of
pseudocompact subsets of $X$ then the topological vector space
$C_{\lambda,\rho}(X)$ can be viewed as a linear subspace of the
space $C_{e^{-1}(\lambda),\rho}(X_{ps})$.}
\end{corollary}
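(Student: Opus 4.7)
The plan is to derive this corollary as a direct specialization of Theorem~\ref{t1_3_24}, setting $Y=\mathbb{R}$, taking $\mu$ to be the natural uniformity induced by the usual metric $\rho$, and taking $\lambda = ps$, the family of pseudocompact subsets of $X$. With these choices, the condensation is the natural map $e\colon X_{ps}\to X$ from the $ps$-leader, and the conclusion of Theorem~\ref{t1_3_24} reads exactly as: $e^{\#}\colon C_{ps,\rho}(X)\to C_{e^{-1}(ps),\rho}(X_{ps})$ is a linear embedding, which is the content of the corollary.

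The only nontrivial step is to verify the hypothesis of Theorem~\ref{t1_3_24}: namely, that for every continuous $f\colon X_{ps}\to \mathbb{R}$ and every $A\in e^{-1}(ps)$, the image $f(A)$ is bounded in $\mathbb{R}$. I would fix such an $A$, writing $A=e^{-1}(B)$ for some pseudocompact $B\subseteq X$. From the construction of the $\lambda$-leader recalled in Section~3, the restriction $e|_{e^{-1}(\overline{B})}\colon e^{-1}(\overline{B})\to \overline{B}$ is a homeomorphism, so restricting further to the subset $e^{-1}(B)$ gives a homeomorphism $e^{-1}(B)\to B$. Thus $e^{-1}(B)$ is homeomorphic to the pseudocompact space $B$, hence is itself pseudocompact, and so the continuous real-valued function $f|_{e^{-1}(B)}$ is bounded on $e^{-1}(B)$. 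This is exactly the $Y$-boundedness condition required in Theorem~\ref{t1_3_24}.

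The main obstacle, such as it is, is precisely this transfer of pseudocompactness across the condensation $e$; it is resolved by the homeomorphism property built into the definition of $X_{ps}$. Once this is in place, the remainder is an immediate application of Theorem~\ref{t1_3_24}, entirely parallel to Corollary~\ref{t1_3_25}, where the analogous transfer for compact sets is automatic along a homeomorphism. In particular, no further verification of the vector space structure is needed, since Theorem~\ref{t1_3_20} and Corollary~\ref{t1_3_21} already guarantee that both $C_{ps,\rho}(X)$ and $C_{e^{-1}(ps),\rho}(X_{ps})$ are topological vector spaces whenever the boundedness condition holds.
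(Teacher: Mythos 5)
Your proposal is correct and follows exactly the route the paper intends: the corollary is stated as an immediate consequence of Theorem~\ref{t1_3_24}, and the only detail to supply is the one you supply, namely that $e^{-1}(B)$ is homeomorphic to the pseudocompact set $B$ (since $e$ restricted to $e^{-1}(\overline{B})$ is a homeomorphism by the construction of the $\lambda$-leader), so every continuous real-valued function has bounded image there. This matches the paper's (implicit) argument, so nothing further is needed.
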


\begin{theorem}\label{t1_3_27}
Let $X$ be a Hausdorff space, $\lambda$ be a cover of $X$ and $X$
be a $\lambda_f$-space. Let $Y$ be a Hausdorff topological vector
space and for every $f:X \rightarrow Y$ and every $A\in\lambda$,
the set $f(A)$ is bounded in $Y$. If $\mu$ is a natural uniformity
on $Y$ then $C_{\lambda,\mu}(X,Y)$ is isomorphic to an inverse
limits of the system
$S^{*}(X,\lambda,Y)=\{C^{*}_\mu(\overline{A},Y),\pi^{A_1}_{A_2},\lambda\}$.
\end{theorem}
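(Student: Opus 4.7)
The plan is to mirror the structure of Theorem~\ref{t1_3_16} (the topological ring version), replacing the verification of ring operations by the verification of the vector space operations, and relying on Theorem~\ref{t1_3_20} in place of Theorem~\ref{t1_3_9}. The two ingredients are a homeomorphism of underlying uniform spaces supplied by Theorem~\ref{t1_2_7}, and the compatibility of the bonding maps and of the limit map with scalar multiplication and addition.

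First, I would observe that by Theorem~\ref{t1_3_20} the boundedness hypothesis on $f(A)$ for $f\in C(X,Y)$ and $A\in\lambda$ guarantees that $C_{\lambda,\mu}(X,Y)$ is a topological vector space. By Remark~\ref{t1_2_2}(3) we have $\overline{A}\in\lambda$ whenever $A\in\lambda$, and hence $f(\overline{A})$ is bounded in $Y$ for every $f\in C(X,Y)$; in particular the restriction map $f\mapsto f|_{\overline{A}}$ carries $C_{\lambda,\mu}(X,Y)$ into $C^{*}_\mu(\overline{A},Y)$. Applied to any $C(\overline{A},Y)$ this also shows that $C^{*}_\mu(\overline{A},Y)=C_\mu(\overline{A},Y)$ carries the structure of a topological vector space whose topology is compatible with the linear operations.

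Next I would check that the system $S^{*}(X,\lambda,Y)$ is in fact an inverse system in the category of topological vector spaces. Lemma~\ref{t1_2_6} already gives that the bonding maps $\pi^{A_1}_{A_2}$ are uniformly continuous, and linearity is a direct pointwise computation: for $f_1,f_2\in C^{*}_\mu(\overline{A_1},Y)$ and scalars $\alpha_1,\alpha_2\in\mathbb{R}$,
\[
\pi^{A_1}_{A_2}(\alpha_1 f_1+\alpha_2 f_2)=(\alpha_1 f_1+\alpha_2 f_2)|_{\overline{A_2}}=\alpha_1\,\pi^{A_1}_{A_2}(f_1)+\alpha_2\,\pi^{A_1}_{A_2}(f_2).
\]
Also, restriction of a function with bounded image is a function with bounded image, so $\pi^{A_1}_{A_2}$ does land in $C^{*}_\mu(\overline{A_2},Y)$.

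Then I would invoke Theorem~\ref{t1_2_7} to produce the uniform homeomorphism $F:\lim_{\longleftarrow}S^{*}(X,\lambda,Y)\to C_{\lambda,\mu}(X,Y)$, $F(\{f_A\})=\nabla_{A\in\lambda}f_A$; since $X$ is a $\lambda_f$-space and each $f_A$ is continuous on $\overline{A}$, the glued function is continuous, and by hypothesis its restriction to every $A\in\lambda$ has bounded image, so $F$ lands in $C_{\lambda,\mu}(X,Y)$. The remaining thing is linearity of $F$, which is immediate pointwise: for $x\in\overline{A}$,
\[
F\bigl(\alpha_1\{f^{1}_A\}+\alpha_2\{f^{2}_A\}\bigr)(x)=\alpha_1 f^{1}_A(x)+\alpha_2 f^{2}_A(x)=\alpha_1 F(\{f^{1}_A\})(x)+\alpha_2 F(\{f^{2}_A\})(x).
\]

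The main obstacle, if any, is checking that the inverse limit in the category of topological vector spaces agrees with the inverse limit of the underlying uniform spaces produced by Theorem~\ref{t1_2_7}; this is handled by Proposition~\ref{t1_3_2} and Remark~\ref{t1_3_3}, which identify the natural uniformity of the topological vector space $C_{\lambda,\mu}(X,Y)$ with $\mu$, so that the uniform homeomorphism $F$ is automatically an isomorphism of topological vector spaces once its linearity is verified.
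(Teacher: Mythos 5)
Your proof follows essentially the same route as the paper's: verify that the bonding maps $\pi^{A_1}_{A_2}$ are uniformly continuous (Lemma~\ref{t1_2_6}) and linear by a pointwise computation, obtain the homeomorphism $F(\{f_A\})=\bigtriangledown_{A\in\lambda}f_A$ from Theorem~\ref{t1_2_7}, and check that $F$ is linear. The one caveat is your side remark that $C^{*}_\mu(\overline{A},Y)=C_\mu(\overline{A},Y)$: the hypothesis bounds $f(A)$ only for $f$ defined on all of $X$, and a continuous function on $\overline{A}$ need not extend to $X$, so this identification is not justified as stated --- but it is also not needed, since the argument only requires that restrictions of members of $C(X,Y)$ land in $C^{*}_\mu(\overline{A},Y)$ and that compatible families glue, both of which you establish.
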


\begin{proof} Let us check that the mappings $\pi^{A_1}_{A_2}$ are continuous and linear.
The continuity of the mappings $\pi^{A_1}_{A_2}$ follows from
Lemma \ref{t1_2_6}.  Let $h_1$ and $h_2$ belongs $C^{*}_{\mu
}([A],Y)$. Then
$\pi^{A_1}_{A_2}(\alpha_1f_1+\alpha_2f_2)=(\alpha_1f_1+\alpha_2f_2)\vert_{[A_2]}=
\alpha_1f_1\vert_{[A_2]}+\alpha_2f_2\vert_{[A_2]}=\alpha_1\pi^{A_1}_{A_2}(f_1)+\alpha_2\pi^{A_1}_{A_2}(f_2)$.
Thus, we proved that $S^{*}(X,\lambda,Y)$ is a inverse limit of
topological vector spaces and $\lim\limits_\leftarrow
S^{*}(X,\lambda,Y)$ is a topological vector space.

Let us prove that $F:\lim\limits_\leftarrow
S^*(X,\lambda,Y)\to C_{\lambda,\mu}(X,Y)$, constructed in the
proof of Theorem \ref{t1_2_7}, is a linear homeomorphism. By
Theorem \ref{t1_2_8}, $F$ is a homeomorphism. Claim that $F$ is
linear. Let $f_1=\{f^1_A\}_{A\in\lambda}$,
$f_2=\{f^{2}_A\}_{A\in\lambda}\in \lim\limits_\leftarrow
S^{*}(X,\lambda,Y)$. Then
$\alpha_1f_1+\alpha_2f_2=\{\alpha_1f^{1}_A+
\alpha_2f^{2}_A\}_{A\in\lambda}$. Clearly
$\alpha_{1}\bigtriangledown_{A\in\lambda}f^{1}_A(x)+
\alpha_{2}\bigtriangledown_{A\in\lambda}f^{2}_A(x)=\bigtriangledown_{A\in\lambda}
\alpha_1f^{1}_A(x)+
\bigtriangledown_{A\in\lambda}\alpha_2f^{2}_A(x)=\bigtriangledown_{A\in\lambda}(\alpha_1f^{1}_A+
\alpha_2f^{2}_A)(x)$, hence, $F$ is linear.
\end{proof}

\begin{corollary}\label{t1_3_28}
{\it Let $X$ be a Hausdorff space, $\lambda$ be a cover of $X$,
$Y$ be a Hausdorff topological vector space and $\mu$ be a natural
uniformity on $Y$.  Let $e:X_\lambda \rightarrow Y$ be a natural
condensation from $X_\lambda$ onto $X$ and for every $A\in\lambda$
and $f\in C(X_\lambda,Y)$ the set $f(e^{-1}(F))$ is bounded subset
of $Y$.  Then $C_{\lambda,\mu}(X,Y)$ is a linear subspace of the
space $\lim\limits_\leftarrow S^{*}(X,\lambda,Y)$}.
\end{corollary}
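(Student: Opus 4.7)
The plan is to derive Corollary~\ref{t1_3_28} by combining Theorem~\ref{t1_3_24} (linear embedding into the $\lambda$-leader function space) with Theorem~\ref{t1_3_27} (inverse limit representation when the domain is a $\lambda_f$-space), and then identifying the resulting inverse system with $S^*(X,\lambda,Y)$.

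First, I would invoke Theorem~\ref{t1_3_24} for the Hausdorff case: the boundedness hypothesis on $f(e^{-1}(A))$ for $A\in\lambda$ and $f\in C(X_\lambda,Y)$ is precisely what is required, so $e^{\#}:C_{\lambda,\mu}(X,Y)\to C_{e^{-1}(\lambda),\mu}(X_\lambda,Y)$ is a linear topological embedding. In particular, $C_{\lambda,\mu}(X,Y)$ can be viewed as a linear subspace of $C_{e^{-1}(\lambda),\mu}(X_\lambda,Y)$.

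Next, I would show that Theorem~\ref{t1_3_27} applies to $(X_\lambda, e^{-1}(\lambda), Y)$. By the construction of the $\lambda$-leader, $X_\lambda$ is a $\gamma$-space for $\gamma=\{e^{-1}(A):A\in\lambda\}=e^{-1}(\lambda)$, hence by Theorem~\ref{th119} it is an $e^{-1}(\lambda)_f$-space. The boundedness hypothesis of the corollary is exactly the hypothesis needed by Theorem~\ref{t1_3_27} on $X_\lambda$. Consequently, $C_{e^{-1}(\lambda),\mu}(X_\lambda,Y)$ is linearly topologically isomorphic to $\lim\limits_{\longleftarrow} S^{*}(X_\lambda, e^{-1}(\lambda),Y)$, that is, to the inverse limit of the system $\{C^{*}_\mu(\overline{e^{-1}(A)},Y),\pi^{A_1}_{A_2},\lambda\}$.

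Finally, I would identify this inverse system with $S^{*}(X,\lambda,Y)$. Since $e|_{e^{-1}(\overline{A})}:\,e^{-1}(\overline{A})\to\overline{A}$ is a homeomorphism for each $A\in\lambda$ (a defining property of the $\lambda$-leader), the induced map $(e|_{e^{-1}(\overline{A})})^{\#}:C^{*}_\mu(\overline{A},Y)\to C^{*}_\mu(\overline{e^{-1}(A)},Y)$ is a linear uniform homeomorphism, and these maps are compatible with the bonding maps $\pi^{A_1}_{A_2}$ of both systems. Therefore the two inverse systems are isomorphic, and so their inverse limits are linearly homeomorphic. Composing the embedding from the first step with the inverse limit identifications gives the desired linear embedding $C_{\lambda,\mu}(X,Y)\hookrightarrow \lim\limits_{\longleftarrow} S^{*}(X,\lambda,Y)$.

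The only subtle point is checking that the linear structure is preserved throughout, which follows routinely since all the maps involved (namely $e^{\#}$, the restriction maps, and the natural $\bigtriangledown$-assembly map from Theorem~\ref{t1_2_7}) are pointwise-defined and hence automatically $\mathbb{R}$-linear; the main substantive step is the identification of the two inverse systems in the third paragraph, which however is immediate from the homeomorphism property of $e$ on closures of elements of $\lambda$.
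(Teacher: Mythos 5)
Your derivation is correct and follows exactly the route the paper intends for this corollary (which it states without explicit proof, in parallel with the ring case): apply Theorem~\ref{t1_3_24} to embed $C_{\lambda,\mu}(X,Y)$ linearly into $C_{e^{-1}(\lambda),\mu}(X_\lambda,Y)$, note that $X_\lambda$ is an $e^{-1}(\lambda)$-space and hence an $e^{-1}(\lambda)_f$-space so that Theorem~\ref{t1_3_27} identifies the latter with the inverse limit, and use the homeomorphisms $e|_{e^{-1}(\overline{A})}$ to identify that inverse system with $S^{*}(X,\lambda,Y)$. No gaps.
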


Similar the case of topological rings, the condition that
$f(e^{-1}(A))$ is bounded in $Y$ for a Tychonoff space $X$ can be
reduced to the condition that $C_{\lambda,\mu}(X,Y)$ be a
topological vector space (see Corollary \ref{t1_3_30}).

\begin{corollary}\label{t1_3_29}
{\it Let $X$ be a Tychonoff space, $\lambda$ be a cover of $X$,
$Y$ be a complete Hausdorff topological vector space and $\mu$ be
a natural uniformity on $Y$. If for any $A\in\lambda$ and $f\in
C(X,Y)$ the set $f(A)$ is bounded in $Y$ then the topological
vector space $\lim\limits_\leftarrow S^{*}(X,\lambda,Y)$ is a
completion of the topological vector space
$C_{\lambda,\mu}(X,Y)$}.
\end{corollary}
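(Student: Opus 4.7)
The plan is to mimic the proof of Corollary~\ref{t1_3_18} (the ring version), replacing every appeal to ring-theoretic facts by their counterparts for topological vector spaces, and using Theorem~\ref{t1_3_27} in place of Theorem~\ref{t1_3_16}. Concretely, I would first invoke Theorem~\ref{t1_2_11} to identify $C_{e^{-1}(\lambda),\mu}(X_{\tau\lambda},Y)$ as the uniform completion of $C_{\lambda,\mu}(X,Y)$. Under the hypothesis, $C_{\lambda,\mu}(X,Y)$ is a topological vector space by Theorem~\ref{t1_3_20}, and since the uniformity on $C_{\lambda,\mu}(X,Y)$ is the natural uniformity of its additive group (Remark~\ref{t1_3_3}), the completion inherits the structure of a topological vector space by the standard fact that completions of Hausdorff topological vector spaces over $\mathbb{R}$ are again topological vector spaces.

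Next I would verify that $C_{e^{-1}(\lambda),\mu}(X_{\tau\lambda},Y)$ is itself a topological vector space. By Theorem~\ref{t1_3_20} this reduces to showing that for every $B\in e^{-1}(\lambda)$ and every $f\in C(X_{\tau\lambda},Y)$ the image $f(B)$ is bounded in $Y$. This is the analogue of Corollary~\ref{t1_3_19} for the vector-space setting, and the argument is the same: the linear embedding $e^{\#}$ from Theorem~\ref{t1_3_24} sends $C_{\lambda,\mu}(X,Y)$ densely into its completion, and a topological vector space structure on a dense subspace of a topological group forces the corresponding boundedness of the images of the prescribed sets in the ambient completion.

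With the boundedness in hand, I would apply Theorem~\ref{t1_3_27} to the $\gamma_f$-space $X_{\tau\lambda}$ (where $\gamma=e^{-1}(\lambda)$), obtaining a topological-vector-space isomorphism
\[
C_{e^{-1}(\lambda),\mu}(X_{\tau\lambda},Y)\;\cong\;\lim\nolimits_{\longleftarrow}\, S^{*}(X_{\tau\lambda},e^{-1}(\lambda),Y).
\]
Because $e|_{e^{-1}(\overline{A})}\colon e^{-1}(\overline{A})\to\overline{A}$ is a homeomorphism for every $A\in\lambda$ (this is the defining property of the $\lambda_f$-leader), the inverse systems $S^{*}(X_{\tau\lambda},e^{-1}(\lambda),Y)$ and $S^{*}(X,\lambda,Y)$ are canonically isomorphic, so their inverse limits coincide as topological vector spaces. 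Combining these identifications yields that $\lim\nolimits_{\longleftarrow}S^{*}(X,\lambda,Y)$ is the completion of $C_{\lambda,\mu}(X,Y)$.

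The step I expect to be the main obstacle is the transfer of the boundedness condition from $X$ to $X_{\tau\lambda}$, i.e. establishing the vector-space analogue of Corollary~\ref{t1_3_19}. Although morally forced by the fact that the completion of a topological vector space is a topological vector space, making this rigorous requires care in identifying the induced operations on the completion and verifying continuity of scalar multiplication on sets of the form $e^{-1}(\overline{A})$; everything else in the proof is then a bookkeeping combination of Theorems~\ref{t1_2_11}, \ref{t1_3_24} and \ref{t1_3_27}.
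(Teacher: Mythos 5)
Your proposal is correct and follows essentially the same route as the paper: Theorem~\ref{t1_2_11} identifies $C_{e^{-1}(\lambda),\mu}(X_{\tau\lambda},Y)$ as the completion, the Bourbaki fact that the completion of a topological vector space in its natural uniformity is again a topological vector space gives the vector-space structure, and Theorem~\ref{t1_3_27} applied to the $\lambda_f$-leader (together with the homeomorphisms $e|_{e^{-1}(\overline{A})}$) identifies this space with $\lim\limits_{\leftarrow}S^{*}(X,\lambda,Y)$. The step you flag as the main obstacle is not one: the boundedness of $f(B)$ for $B\in e^{-1}(\lambda)$ is not needed as an input but falls out afterwards from Theorem~\ref{t1_3_20} once the completion is known to be a topological vector space (this is exactly the paper's Corollary~\ref{t1_3_30}).
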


\begin{proof}  It is enough to prove that
$C_{e^{-1}(\lambda),\mu}(X_{\tau\lambda},Y)$ is a topological
vector space and that it is the completion of
$C_{\lambda,\mu}(X,Y)$. By Theorem \ref{t1_2_11},
the space $C_{e^{-1}(\lambda),\mu}(X_{\tau\lambda},Y)$ is the completion of
$C_{\lambda,\mu}(X,Y)$. By Remark \ref{t1_3_7}, the uniformity of
the space $C_{\lambda,\mu}(X,Y)$ is a natural uniformity of the
topological vector space $C_{\lambda,\mu}(X,Y)$, and the
completion of a topological vector space by the natural uniformity
is a topological vector space (Ch.1,1 in \cite{Bur69}). Hence,
$C_{e^{-1}(\lambda),\mu}(X_{\tau\lambda},Y)$ is a topological
vector space.
\end{proof}

\begin{corollary}\label{t1_3_30}
{\it Let $X$ be a Tychonoff space, $Y$ be a complete Hausdorff
topological vector space, $\lambda$ be a cover of $X$, and for
each $A\in\lambda$ and $f\in C(X,Y)$ the set $f(A)$ is bounded in
$Y$. If $X_{\tau\lambda}$ is a $\lambda _f$-leader of $X$ and
$e:X_{\tau\lambda} \rightarrow X$ is a natural condensation from
$X_{\tau\lambda}$ onto $X$ then for each $B\in e^{-1}(\lambda)$
and a continuous function $g:X_{\tau\lambda} \rightarrow Y$ the
set $g(B)$ is bounded in $Y$. In particular, if $\lambda$ is a
family of bounded sets then $e^{-1}(\lambda)$ is a family of
bounded sets, too}.
\end{corollary}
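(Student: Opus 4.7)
The plan is to mirror the strategy of the ring analogue, Corollary~\ref{t1_3_19}, transposed to the topological vector space setting. First, I would apply Theorem~\ref{t1_3_20} in its sufficient direction: the hypothesis that $f(A)$ is bounded in $Y$ for every $A\in\lambda$ and every $f\in C(X,Y)$, together with $\lambda$ being a cover of $X$, yields that $C_{\lambda,\mu}(X,Y)$ is a topological vector space.

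Second, I would invoke Corollary~\ref{t1_3_29} (or, more precisely, the argument in its proof), which identifies $C_{e^{-1}(\lambda),\mu}(X_{\tau\lambda},Y)$ as the completion of $C_{\lambda,\mu}(X,Y)$. Because the uniformity on $C_{\lambda,\mu}(X,Y)$ is the natural uniformity of its underlying additive group (Remark~\ref{t1_3_3}) and the completion of a topological vector space with respect to its natural uniformity is again a topological vector space, the completed space $C_{e^{-1}(\lambda),\mu}(X_{\tau\lambda},Y)$ inherits the structure of a topological vector space.

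Third, I would apply Theorem~\ref{t1_3_20} in its necessary direction to the data $(X_{\tau\lambda},\, e^{-1}(\lambda),\, Y,\, \mu)$. The family $e^{-1}(\lambda)$ covers $X_{\tau\lambda}$ since $e$ is a bijective condensation and $\lambda$ covers $X$. From the fact that $C_{e^{-1}(\lambda),\mu}(X_{\tau\lambda},Y)$ is a topological vector space, Theorem~\ref{t1_3_20} gives that $g(B)$ is bounded in $Y$ for every $B\in e^{-1}(\lambda)$ and every continuous $g:X_{\tau\lambda}\to Y$, which is exactly the conclusion. The ``in particular'' clause then follows by specialising to $Y=\mathbb{R}$: boundedness of $A\subseteq X$ means $f(A)$ is bounded in $\mathbb{R}$ for every $f\in C(X,\mathbb{R})$, so the main hypothesis holds automatically, and the main conclusion gives that every $e^{-1}(A)$ is bounded in $X_{\tau\lambda}$.

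I expect the only delicate point to be the second step, namely justifying that the completion $C_{e^{-1}(\lambda),\mu}(X_{\tau\lambda},Y)$ genuinely carries a topological vector space structure (and not merely a topological group structure). This rests on the fact that the natural additive-group uniformity on a topological vector space has the property that its completion is again a topological vector space; this is precisely the fact used in the proof of Corollary~\ref{t1_3_29}, so no new verification is required beyond citing it. Once this is in hand, the rest of the argument is a clean application of the characterisation in Theorem~\ref{t1_3_20}.
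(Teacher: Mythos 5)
Your proposal is correct and follows essentially the same route as the paper: the authors likewise observe that the proof of Corollary~\ref{t1_3_29} already establishes that $C_{e^{-1}(\lambda),\mu}(X_{\tau\lambda},Y)$ is a topological vector space (as the completion, under the natural group uniformity, of the topological vector space $C_{\lambda,\mu}(X,Y)$), and then apply the necessity direction of Theorem~\ref{t1_3_20} to the cover $e^{-1}(\lambda)$ of $X_{\tau\lambda}$. Your write-up simply makes explicit the steps the paper leaves as citations, including the ``delicate point'' about the completion of a topological vector space being a topological vector space, which is exactly the fact the paper attributes to Bourbaki in the proof of Corollary~\ref{t1_3_29}.
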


\begin{proof} In the proof of Corollary \ref{t1_3_29}, in particular, we proved that
the space $C_{e^{-1}(\lambda),\mu}(X_{\tau\lambda},Y)$ is a topological
vector space. It remains to apply Theorem \ref{t1_3_20}.
\end{proof}

\section{Open questions}

For different families $\lambda$ and $\gamma$ of uniformities of
spaces $C_{\lambda,\mu}(X,Y)$ and $C_{\gamma,\mu}(X,Y)$ can be
equivalent for any uniform space $Y$. In particular, if each
element of the family $\gamma$ is a subset of the set
$\overline{\bigcup\limits_{i=1}^n B_i}$ where $B_i\in\lambda$ and
each element of $\lambda$ is a subset of
$\overline{\bigcup\limits_{i=1}^n A_i}$, where $A_i\in\gamma$,
then uniformities of $C_{\lambda,\mu}(X,Y)$ and
$C_{\gamma,\mu}(X,Y)$ are equivalent (\cite{Bur75b}).

\medskip

{\bf Question 1}. Let $X$ be a Tychonoff space and
$\lambda\subseteq 2^X$. For which equivalent uniformities $\mu_1$
and $\mu_2$ on $Y$ are the uniformities on the spaces $C_{\lambda,
\mu_1}(X,Y)$ and $C_{\lambda, \mu_2}(X,Y)$ equivalent? In
particular, what if $X$ is compact?

\medskip

{\bf Question 2}. Let $X$ be a Tychonoff space and
$\lambda\subseteq 2^X$. For which equivalent metrics $\rho_1$ and
$\rho_2$ on $Y$ are the spaces $C_{\lambda, \rho_1}(X,Y)$ and
$C_{\lambda, \rho_2}(X,Y)$ homeomorphic? In particular, what if
$\lambda$ consists of compact subsets of $X$?

\medskip

{\bf Question 3}. For which property $\mathcal{P}$ will a family
$\lambda\subseteq 2^X$ whose elements have property $\mathcal{P}$
be a natural cover of $X$? What, if $\mathcal{P}$ is a property of
the type of boundedness? In particular, what if $\lambda$ consists
of pseudocompact subsets of $X$?

\medskip

{\bf Acknowledgements.} The authors would like to thank the
referee for careful reading and valuable comments.

\bibliographystyle{model1a-num-names}
\bibliography{<your-bib-database>}

\end{document}